\numberwithin{equation}{section}
\numberwithin{figure}{section}
\theoremstyle{plain}
\newtheorem{thm}{\protect\theoremname}[section]
  \theoremstyle{plain}
  \newtheorem{lem}[thm]{\protect\lemmaname}
\newcommand{\ds}{\displaystyle}
\def\R{\mathbb R}
\def\ga{\gamma}
\def\la{\lambda}
\def\var{\varphi}
\def\Om{\Omega}
\def\pa{\partial}
\numberwithin{equation}{section}
\newtheorem{theorem}{Theorem}[section]
\theoremstyle{definition}
\newtheorem{rem}[thm]{Remark}
\newtheorem{prop}[thm]{Proposition}
\newtheorem{remark}[theorem]{Remark}
  \providecommand{\lemmaname}{Lemma}
\providecommand{\theoremname}{Theorem}
\begin{document}
\title[Solutions for a nonlocal elliptic equation]
{Solutions for a nonlocal elliptic equation involving critical growth and Hardy potential}

\author{ Chunhua Wang, Jing Yang, Jing Zhou}

\date{}

\address{[Chunhua Wang] School of Mathematics and Statistics, Central China
Normal University, Wuhan 430079, P. R. China. }

\email{[Chunhua Wang] chunhuawang@mail.ccnu.edu.cn}

\address{[Jing Yang] College of mathematics and physics, Jiangsu University of Science and
Technology,Zhenjiang 212003, P. R. China.}

\email{[Jing Yang] yyangecho@163.com}

\address{[Jing Zhou] School of Mathematics and Statistics, South-Central University for Nationalities, Wuhan 430074, P. R. China }

\email{[Jing Zhou] zhouj@mail.scuec.edu.cn}

\begin{abstract}
In this paper, by an approximating argument,  we obtain infinitely
many solutions for the following Hardy-Sobolev fractional equation with
critical growth
\begin{equation*}\label{0.1}
\left\{%
\begin{array}{ll}
    (-\Delta)^{s} u-\ds\frac{\mu u}{|x|^{2s}}=|u|^{2^*_s-2}u+au, & \hbox{$\text{in}~ \Omega$},\vspace{0.1cm} \\
   u=0,\,\, &\hbox{$\text{on}~\partial \Omega$}, \\
\end{array}%
\right.
\end{equation*}
provided $N>6s$, $\mu\geq0$, $0< s<1$, $2^*_s=\frac{2N}{N-2s}$, $a>0$
is a constant and $\Omega$ is an open bounded domain in $\R^N$
which contains the origin.

\end{abstract}

\maketitle
{\small
\noindent {\bf Keywords:}\,\,Hardy-Sobolev fractional equation; Infinitely many
solutions; Variational methods.
\smallskip


\section{Introduction}
Let $0< s<1$, $N> 6s, 2^*_s=\frac{2N}{N-2s},$ $\bar{\mu}$ be defined later, and $\Omega$ be
an open bounded domain in $\R^{N}$ which contains the origin.
  We study the following nonlinear fractional problem
\begin{equation}\label{1.1}
\left\{%
\begin{array}{ll}
    (-\Delta)^{s} u-\ds\frac{\mu u}{|x|^{2s}}=|u|^{2^*_s-2}u+au, & \hbox{$\text{in}~ \Omega$},\vspace{0.1cm} \\
   u=0,\,\, &\hbox{$\text{on}~\partial \Omega$}, \\
\end{array}%
\right.
\end{equation}
where $\mu\geq0$ satisfies $\frac{2^*_s\sqrt{\bar{\mu}}}{\sqrt{\bar{\mu}}-\sqrt{\bar{\mu}-\mu}}>\frac{2N}{N-6s}$,
$a>0$ is a constant, and $(-\Delta)^{s}$ stands for the fractional  Laplacian
operator in $\Om$ with zero Dirichlet boundary values on $\partial \Om$.

To define the fractional  Laplacian operator $(-\Delta)^{s}$ in $\Om$, let $\{\la_k,\var_k\}$ be the eigenvalues and corresponding eigenfunctions of the
Laplacian operator $-\Delta$ in $\Om$ with zero Dirichlet boundary values on $\partial \Om$,
\begin{equation*}\label{1.1.1}
\left\{%
\begin{array}{ll}
    -\Delta \var_k=\la_k\var_k, & \hbox{$\text{in}~ \Omega$},\vspace{0.1cm} \\
   \var_k=0,\,\, &\hbox{$\text{on}~\partial \Omega$}, \\
\end{array}%
\right.
\end{equation*}
normalized  by $\|\var_k\|_{L^2(\Om)}=1$. Then one can define $(-\Delta)^{s}$ for $s\in(0,1)$ by
$$
(-\Delta)^{s}u=\sum_{k=1}^\infty \la^s_kc_k\var_k,
$$
which clearly maps
$$
H^s_0(\Om):=\Big\{u=\sum_{k=1}^\infty c_k\var_k\in L^2(\Om): \sum_{k=1}^\infty \la^s_kc^2_k<\infty\Big\}
$$
into $L^2(\Om)$. Moreover, $H_0^s(\Om)$ is a Hilbert space equipped with an inner product
$$
\Big \langle \sum_{k=1}^\infty c_k\var_k,\sum_{k=1}^\infty d_k\var_k\Big\rangle_{H_0^s(\Om)}
=\sum_{k=1}^\infty \la^s_kc_kd_k,\,\,\,\text{if}\,\,\sum_{k=1}^\infty c_k\var_k,\sum_{k=1}^\infty d_k\var_k\in H^s_0(\Om).
$$

Now we can write the functional corresponding to \eqref{1.1} as
\begin{equation}\label{1.1.2}
I(u)=\frac{1}{2}\ds\int_{\Om}\Big(\big|(-\Delta)^{\frac{s}{2}} u\big|^2-\mu\frac{u^{2}}{|x|^{2s}}-au^2\Big)dx
-\frac{1}{2^*_s}\ds\int_{\Om}|u|^{2^*_s}dx,\,\,\,\,u\in H^s_0(\Om).
\end{equation}

A great deal of work has currently been devoted to the study of the fractional Laplacian operator
 as it appears in several applications to some models related to anomalous diffusions in plasmas,
flames propagation and chemical reactions in liquids, population dynamics, geophysical fluid
dynamics, and American options in finance, see, e.g., \cite{abfs,es,fl,GM,kmr,A} and the references therein.
 We refer the reader to
\cite{bbc,cx,cs,ys,fq} for a nice expository and \cite{bcd,bc,ct,t} for the operator defined by the classical spectral
theory and \cite{cscs,dd,stein} for the operator defined via the Riesz potential.

In this paper, our interest in problem \eqref{1.1} is related to the following Hardy inequality which was proved by
Herbst in \cite{h} (see also \cite{bel,y}):
 \begin{equation}\label{1.2}
\bar{\mu}\int_{\R^{N}}\frac{u^2}{|x|^{2s}}dx\leq \int_{\R^{N}}|\xi|^{2s}\hat{u}^2d\xi,
\,\,\,\forall u\in C^{\infty}_0(\R^N),
\end{equation}
where $\hat{u}$ is the Fourier transform of $u$ and
$$
\bar{\mu}=2^{2s}\frac{\Gamma^2(\frac{N+2s}{4})}{\Gamma^2(\frac{N-2s}{4})}.
$$
Here $\Gamma$ is the usual gamma function, the constant $\bar{\mu}$ is optimal and
converges to the classical Hardy constant $\frac{(N-2)^2}{4}$ when $s\rightarrow1$. Indeed, for $\alpha\in[0,\frac{N-2s}{2})$, if we
denote
$$\Upsilon_\alpha=
2^{2s}\frac{\Gamma(\frac{N+2s+2\alpha}{4})\Gamma(\frac{N+2s-2\alpha}{4})}{\Gamma(\frac{N-2s-2\alpha}{4})\Gamma(\frac{N-2s+2\alpha}{4})},
$$
then $\bar{\mu}=\Upsilon_0$, $\Upsilon_\alpha\rightarrow0$ when $\alpha\rightarrow\frac{N-2s}{2}$ and
the mapping $\alpha\longmapsto\Upsilon_\alpha$ is monotone decreasing (see \cite{fa}).

Taking into account the behavior of the Fourier transform with respect to the homogeneity,
one has (see \cite{dp,ls,l,f2})
 \begin{equation*}\label{1.2.1}
\big\|(-\Delta)^{\frac{s}{2}} u\big\|_{L^2(\R^N)}= \int_{\R^{N}}|\xi|^{2s}\hat{u}^2d\xi,
\,\,\,\forall u\in C^{\infty}_0(\R^N),
\end{equation*}
and then the Hardy inequality \eqref{1.2} can be rewritten as
 \begin{equation}\label{1.2.2}
\big\|(-\Delta)^{\frac{s}{2}} u\big\|_{L^2(\R^N)}\geq\bar{\mu}\int_{\R^{N}}\frac{u^2}{|x|^{2s}}dx,
\,\,\,\forall u\in C^{\infty}_0(\R^N).
\end{equation}
In a more general setting, beyond of the Hilbertian framework, we can refer the reader to
\cite{f,f3} where an improved inequality is proved.

Recently, the semilinear fractional elliptic equations involving Hardy potential
 \begin{equation}\label{1.2.3}
(-\Delta)^su-\frac{\mu u}{|x|^{2s}}=f(u),\,\,\,\text{in}\,\Om,
\end{equation}
have been widely studied since the operator $(-\Delta)^s-\mu|x|^{-2s}$ appears in the problem of stability of relativistic matter
in magnetic fields. In \cite{fa}, Fall studied \eqref{1.2.3} with $f(u)=u^p$ and $\Om=B$, and showed that \eqref{1.2.3} possesses a nonnegative distributional solution if $\mu>0$ and $p>1$ satisfying some suitable conditions.
Replacing $f(u)=u^p+\la u^q$, Barrios, Medina and Peral  \cite{bmp} considered \eqref{1.2.3} and discussed the existence
and multiplicity of solutions depending on the value $p$ to \eqref{1.2.3}. Particularly, they verified the existence of \eqref{1.2.3}
if $p=p(\mu,s)=\frac{N+2s-2\alpha_s}{N-2s-2\alpha_s}$ is the threshold for $\alpha_s\in(0,\frac{N-2s}{2})$.
Choi and Seok \cite{cs} considered problem \eqref{1.1} with $\mu=0$. They obtained the existence of
infinity many solutions of \eqref{1.1} for any $a>0$.

In \cite{cs1}, Cao and Yan also considered problem \eqref{1.1}
with $s=1$. It was proved that \eqref{1.1} has infinitely many solutions if $N\geq7$ and $0\leq\mu<\frac{(N-2)^2}{4}-4$
with $a>0$ and $s=1$.
So motivated by \cite{cs,cs1}, the aim of this paper is to study the existence of
infinity many solutions of the Hardy-Sobolev fractional equation \eqref{1.1}. Now we state our main result as follows.

\begin{thm}\label{thm1.1}
Suppose that $a>0$, $N>6s$ and $0\leq\mu<\Upsilon_s$ satisfying $\frac{2^*_s\sqrt{\bar{\mu}}}{\sqrt{\bar{\mu}}-\sqrt{\bar{\mu}-\mu}}>\frac{2N}{N-6s}$.
Then \eqref{1.1} has infinitely many solutions.
\end{thm}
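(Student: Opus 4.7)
The strategy I would follow is a subcritical approximation of Devillanova--Solimini type, adapted to the fractional Hardy setting as in \cite{cs,cs1}. For small $\ep>0$, introduce the approximating functional
\[
I_\ep(u)=\frac12\ds\int_\Om\Bigl(\bigl|(-\De)^{s/2}u\bigr|^2-\mu\frac{u^2}{|x|^{2s}}-au^2\Bigr)\,dx-\frac{1}{2^*_s-\ep}\ds\int_\Om|u|^{2^*_s-\ep}\,dx
\]
on $H^s_0(\Om)$. Since $2^*_s-\ep$ is subcritical, the embedding $H^s_0(\Om)\hookrightarrow L^{2^*_s-\ep}(\Om)$ is compact and $I_\ep$ satisfies the Palais--Smale condition at every level. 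The Hardy inequality \eqref{1.2.2} combined with $\mu<\bar\mu$ makes the quadratic form $\int_\Om(|(-\De)^{s/2}u|^2-\mu u^2/|x|^{2s})\,dx$ equivalent to the $H^s_0$ norm. Because $I_\ep$ is even, a Fountain / symmetric mountain-pass theorem applied to the filtration $V_k=\text{span}\{\var_1,\ldots,\var_k\}$ produces, for every $k\ge1$, a critical point $u_{k,\ep}$ at an equivariant min-max level $c_{k,\ep}$.

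The next step is to establish bounds uniform in $\ep$. A comparison argument on finite-dimensional subspaces (where $\|\cdot\|_{L^{2^*_s-\ep}}\to\|\cdot\|_{L^{2^*_s}}$ uniformly) gives $c_{k,\ep}\le c_k$ with $c_k\to\infty$, and the linking geometry built into the Fountain theorem yields a matching lower bound $c_{k,\ep}\ge c_k'\to\infty$ that is uniform in $\ep$. Combining $I_\ep(u_{k,\ep})=c_{k,\ep}$ with $\langle I_\ep'(u_{k,\ep}),u_{k,\ep}\rangle=0$ then gives $\|u_{k,\ep}\|_{H^s_0}\le C_k$ uniformly in $\ep$. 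Extracting a weak limit $u_{k,\ep}\wto u_k$ in $H^s_0(\Om)$, the weak continuity of the Hardy bilinear form (a consequence of $\mu<\bar\mu$) and a Brezis--Lieb type decomposition for the critical term show that $u_k$ is a weak solution of \eqref{1.1}.

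The principal obstacle is the concentration-compactness step, namely showing $u_k\ne0$ and that $I(u_k)\to\infty$ along $k$. Failure of strong convergence can arise from two kinds of bubbles: away from the origin the profiles are the fractional Aubin--Talenti extremals, each contributing a fixed energy $\frac{s}{N}S^{N/(2s)}$ to the defect measure; at the origin the profiles are the Hardy ground states of $(-\De)^s-\mu|x|^{-2s}$ on $\R^N$, with homogeneity exponents $\frac{N-2s\pm 2\al_s}{2}$ determined by $\Upsilon_{\al_s}=\mu$ and with energy quantum $S_\mu^{N/(2s)}$. Following the Cao--Yan strategy of \cite{cs1}, the key technical device would be a local Pohozaev/Moser iteration around each putative concentration point, yielding an $L^q$ estimate for $u_{k,\ep}-u_k$ with $q$ strictly between $2$ and $2^*_s$ that contradicts the presumed blow-up rate. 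The dimensional assumption $N>6s$ is exactly what renders $au$ subcritical enough to be absorbed by this iteration, while the sharper inequality $\frac{2^*_s\sqrt{\bar\mu}}{\sqrt{\bar\mu}-\sqrt{\bar\mu-\mu}}>\frac{2N}{N-6s}$ places the Hardy-bubble exponent inside the admissible window that closes the argument at the origin. Once blow-up is excluded, $u_{k,\ep}\to u_k$ strongly in $H^s_0(\Om)$, so $I(u_k)=\lim_{\ep\to0}c_{k,\ep}\ge c_k'\to\infty$, producing infinitely many distinct solutions of \eqref{1.1}.
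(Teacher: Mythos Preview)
Your proposal is correct and follows essentially the same approach as the paper: subcritical approximation, symmetric mountain-pass to produce an unbounded sequence of min-max levels, uniform energy bounds, and then exclusion of bubbling via a local Pohozaev identity combined with Brezis--Kato/Moser-type integrability estimates on ``safe'' annuli. One technical point you do not make explicit but the paper emphasizes is that the local Pohozaev identity and the gradient/$L^p$ estimates on annuli are carried out after passing to the Caffarelli--Silvestre $s$-harmonic extension on $\Omega\times(0,\infty)$, since the nonlocality of $(-\Delta)^s$ makes a direct derivation of these identities impractical.
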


\begin{remark}\label{re1.2}
Our result extends the results in \cite{cs,yyy}
for the particular case of $\mu=0.$  Since there is no Hardy term in \cite{cs},
they only required that $N>6s.$
\end{remark}

\begin{remark}\label{re1.3}
When $s\rightarrow 1,$  the assumptions that $0\leq\mu<\Upsilon_s$ and $\frac{2^*_s\sqrt{\bar{\mu}}}{\sqrt{\bar{\mu}}-\sqrt{\bar{\mu}-\mu}}>\frac{2N}{N-6s}$
in Theorem \ref{thm1.1} are equivalent to
$0\leq \mu<\frac{N(N-4)}{4}$ and $\mu<\frac{(N-2)^{2}}{4}-4$ respectively, that is
just $0\leq \mu<\frac{(N-2)^{2}}{4}-4.$ So our result is uniform with the result in \cite{cs1}
when $s\rightarrow 1.$
\end{remark}

As in \cite{cs1,cs}, one of the main difficulties to prove Theorem
\ref{thm1.1} by using variational methods is that $I(u)$ does not
satisfy the Palais-Smale condition for large energy level, since
$2^{*}_s$ is the critical exponent for the Sobolev embedding from
$H^{s}(\Omega)$ to $L^{2^*_s}(\Omega)$. Another difficulty is that, unlike \cite{cs}, every nontrivial
 solution of \eqref{1.1} is singular at $\{x=0\}$ if $\mu\neq 0$ (see \cite{bmp}). So different techniques are needed to deal with
 the case $\mu\neq 0$. In order to overcome
the first difficulty, we first look at the following perturbed problem:
\begin{equation}\label{1.3}
\left\{%
\begin{array}{ll}
    (-\Delta)^s u-\ds\frac{\mu u}{|x|^{2s}}=|u|^{2^*_s-2-\epsilon}u+a u, & \hbox{$\text{in}~ \Omega$},\vspace{0.1cm} \\
   u=0,\,\, &\hbox{$\text{on}~\partial \Omega$}, \\
\end{array}%
\right.
\end{equation}
where $\epsilon>0$ is small.

The functional corresponding to \eqref{1.3} becomes
\begin{equation}\label{1.4}
I_{\epsilon}(u)=\frac{1}{2}\ds\int_{\Om}\Big(\big|(-\Delta)^{\frac{s}{2}} u\big|^2-\mu\frac{ u^{2}}{|x|^{2s}}-au^2\Big)dx
-\frac{1}{2^*_s-\epsilon}\ds\int_{\Om}|u|^{2^*_s-\epsilon}dx,\,\,\,u\in
H^{s}_{0}(\Omega).
\end{equation}
 Now $I_{\epsilon}(u)$ is an even functional and satisfies the Palais-Smale
 condition in all energy levels. It follows from the symmetric
 mountain-pass lemma \cite{ar,r}, \eqref{1.3} has infinitely many solutions.
  More precisely, there are positive numbers $c_{\epsilon,l},l=1,2,\cdot\cdot\cdot,$ with $c_{\epsilon,l}\rightarrow\infty$
 as $l\rightarrow +\infty$,
 and a solution $u_{\epsilon,l}$ for \eqref{1.3} satisfying
 $$
I_{\epsilon}(u_{\epsilon,l})=c_{\epsilon,l}.
 $$

Moreover, $c_{\epsilon,l}\rightarrow c_{l}<+\infty$ as
$\epsilon\rightarrow 0.$ Now we want to study the behavior of $u_{\epsilon,l}$ as
$\epsilon\rightarrow 0$ for each fixed $l$. If we can prove that $u_{\epsilon,l}$ converges strongly in $H_{0}^{s}(\Omega)$ to
$u_{l}$ as $\epsilon\rightarrow0$, then $u_l$ is a solution of \eqref{1.1} with $I(u_l)=c_l$. This will imply that \eqref{1.1} has
infinitely many solutions. Thus
Theorem \ref{thm1.1} is a direct consequence of the following
result.
\begin{thm}\label{thm1.2}
Suppose that $a>0$, $N>6s$ and $0\leq\mu<\Upsilon_s$ satisfying $\frac{2^*_s\sqrt{\bar{\mu}}}{\sqrt{\bar{\mu}}-\sqrt{\bar{\mu}-\mu}}>\frac{2N}{N-6s}$.
Then for any sequence $u_{n},$ which is
a solution of \eqref{1.3} with $\epsilon=\epsilon_{n}\rightarrow 0,$
satisfying $\|u_{n}\|\leq C$ for some constant independent of $n$,
$u_{n}$ has a subsequence, which converges strongly in
$H^{s}_{0}(\Omega)$ as $n\rightarrow\infty.$
\end{thm}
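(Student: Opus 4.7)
The goal is to show that along a subsequence $u_n\to u_0$ strongly in $H^s_0(\Om)$, where $u_0$ is the weak limit. The plan follows the two-step blow-up argument of Cao--Yan \cite{cs1}, adapted to the nonlocal/Hardy setting: extract a weak limit, assume the residual bubbles, and rule out each bubble by a localised fractional Pohozaev identity whose scaling relation is incompatible with the energy expansion. The hypotheses $N>6s$ and $\frac{2^*_s\sqrt{\bar\mu}}{\sqrt{\bar\mu}-\sqrt{\bar\mu-\mu}}>\frac{2N}{N-6s}$ enter precisely at this Pohozaev step, guaranteeing that a certain integrability and non-degeneracy condition on the bubble profile is satisfied.

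\textbf{Step 1: weak limit and reduction.} Since $\|u_n\|\le C$, extract a subsequence with $u_n\wto u_0$ in $H^s_0(\Om)$, $u_n\to u_0$ in $L^q(\Om)$ for every $q<2^*_s$, and a.e. Pass to the limit in \eqref{1.3} (using Vitali for the nonlinearity since $\epsilon_n\to 0$, and the weak continuity of the Hardy quadratic form on sequences bounded in $H^s_0$ away from the origin) to obtain that $u_0$ is a weak solution of \eqref{1.1}. Set $v_n:=u_n-u_0$, so $v_n\wto 0$; by a Brezis--Lieb decomposition for $\int|u|^{2^*_s-\epsilon_n}$ together with the orthogonal splitting of the $H^s_0$-norm and weak continuity of the Hardy term, it suffices to show $\|v_n\|\to 0$, and $v_n$ satisfies
\begin{equation*}
(-\Delta)^s v_n-\mu\frac{v_n}{|x|^{2s}}=|v_n|^{2^*_s-2}v_n+o_{H^{-s}}(1).
\end{equation*}

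\textbf{Step 2: bubble decomposition.} If $\|v_n\|\not\to 0$, a Struwe-type profile decomposition (fractional version) yields finitely many bubbles: points $x_n^{(j)}\in\bar\Om$ and scales $\lambda_n^{(j)}\to 0^+$ such that, modulo an $H^s_0$-small error, $v_n$ is a finite sum of rescaled profiles $(\lambda_n^{(j)})^{-(N-2s)/2}U^{(j)}\!\left(\frac{\,\cdot-x_n^{(j)}}{\lambda_n^{(j)}}\right)$. Each $U^{(j)}$ is a nontrivial finite-energy solution either of $(-\Delta)^s U=|U|^{2^*_s-2}U$ on $\R^N$ (when the rescaled origin escapes to infinity), or of the Hardy-critical equation $(-\Delta)^s U-\mu|x|^{-2s}U=|U|^{2^*_s-2}U$ on $\R^N$ (when $x_n^{(j)}\to 0$ with $|x_n^{(j)}|/\lambda_n^{(j)}$ bounded). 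Finiteness of the number of bubbles follows from the boundedness of $I_{\epsilon_n}(u_n)$.

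\textbf{Step 3: Pohozaev identity and contradiction.} To exclude each bubble I would apply the Ros-Oton--Serra fractional Pohozaev identity to $u_n$ on a small ball centred at $x_n^{(j)}$ (or on a small ball around the origin in the Hardy-bubble case). Inserting the bubble ansatz and tracking the leading contributions yields a balance of the form
\begin{equation*}
a\,\Big(\int_{\R^N}|U^{(j)}|^2\,dx\Big)(\lambda_n^{(j)})^{2s}\bigl(1+o(1)\bigr)\;=\;c_j\,\epsilon_n\bigl(1+o(1)\bigr),\qquad c_j>0.
\end{equation*}
The profile $U^{(j)}$ decays at infinity like $|x|^{-(N-2s)}$ in the pure-critical case and like $|x|^{-(N-2s)/2-\sqrt{\bar\mu-\mu}}$ in the Hardy case, so the integral $\int|U^{(j)}|^2$ is finite and strictly positive precisely under $N>6s$ and $\frac{2^*_s\sqrt{\bar\mu}}{\sqrt{\bar\mu}-\sqrt{\bar\mu-\mu}}>\frac{2N}{N-6s}$; these are exactly the standing hypotheses. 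Combining this Pohozaev scaling law $(\lambda_n^{(j)})^{2s}\sim\epsilon_n$ with the energy expansion of $I_{\epsilon_n}$ around the bubble (which fixes $\lambda_n^{(j)}$ in terms of $\epsilon_n$ through a different power coming from the sub-critical perturbation) produces an inconsistent pair of relations. Hence no bubble can form, $\|v_n\|\to 0$, and the subsequence converges strongly.

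\textbf{Main obstacle.} The hardest step is Step 3 in the Hardy case. The non-locality of $(-\Delta)^s$ forces the Pohozaev cut-off to generate tail integrals of the slowly decaying Hardy bubble, which must be bounded sharply; simultaneously, the Hardy singularity at the origin interacts with the Pohozaev test field. The algebraic hypothesis on $\mu$ in Theorem \ref{thm1.2} is dictated exactly by the integrability and non-degeneracy of $\int|U^{(j)}|^2$ together with the control of these tails, which explains why it reduces to the Cao--Yan condition $\mu<(N-2)^2/4-4$ in the limit $s\to 1$ (Remark \ref{re1.3}).
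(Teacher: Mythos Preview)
Your overall architecture (weak limit, profile decomposition, local Pohozaev to kill bubbles) matches the paper, but Step~3 as written has a genuine gap: the Pohozaev balance you assert is not what the identity produces, and the mechanism of contradiction you describe does not work. When you localise the Pohozaev identity on a ball around the slowest concentration point, the right-hand side consists of \emph{boundary} integrals of $|\nabla\bar u_n|^2$, $|u_n|^{p_n}$, $\mu|x|^{-2s}u_n^2$, etc., on a sphere of carefully chosen radius; it is \emph{not} $c_j\epsilon_n(1+o(1))$. The subcritical parameter $\epsilon_n$ enters only to make the term $(\tfrac{N}{p_n}-\tfrac{N-2s}{2})\int|u_n|^{p_n}$ nonnegative so it can be dropped; it does not appear in the final balance. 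Consequently there is no ``inconsistent pair of relations'' with an energy expansion --- the contradiction in the paper is a single inequality $\Lambda_n^{-2s}\le C\Lambda_n^{(2s-N)/2+N/p_1}$, obtained by bounding the left side from below by the $L^2$ mass of the slowest bubble and the right side from above by the boundary terms, and then choosing $p_1>\tfrac{2N}{N-6s}$ (which is exactly where both hypotheses $N>6s$ and the $\mu$-inequality are used).

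What you are missing is the substantial machinery needed to control those boundary terms. The paper works on a ``safe'' annulus of radius $\sim\Lambda_n^{-1/2}$ (not $\Lambda_n^{-1}$) containing no concentration point, and proves there that $\int|u_n|^p\le C\Lambda_n^{Np/(2p_1)-N/2}$ for all admissible $p$; this requires a Brezis--Kato iteration in a two-exponent norm $\|\cdot\|_{*,p_1,p_2,\Lambda_n}$ tailored to the Hardy potential (Section~2), followed by local $L^\infty$-to-$L^\gamma$ estimates for the $s$-harmonic extension on the safe region (Section~3). Only with these estimates in hand can the Pohozaev boundary integrals be shown to be $O(\Lambda_n^{(2s-N)/2+N/p_1})$. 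Your proposal treats this as a routine tail estimate, but for the Hardy bubbles the decay exponent is $\tfrac{N-2s}{2}+\beta$ with $\beta$ determined implicitly by $\Upsilon_\beta=\mu$ (not $\sqrt{\bar\mu-\mu}$ as you wrote; that is the $s=1$ formula), and the admissible range of $p_1$ is capped at $\tfrac{2^*_s\sqrt{\bar\mu}}{\sqrt{\bar\mu}-\sqrt{\bar\mu-\mu}}$ --- this cap, not the finiteness of $\int|U|^2$, is where the $\mu$-hypothesis actually bites.
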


Following the ideas in \cite{cps,cs1,cs}, to prove Theorem \ref{thm1.2}, we shall prove the strong convergence of $u_{\epsilon,l}$
by using a local Pohozaev identity to exclude the possibility of concentration.
We would like to point out that since an important feature of the fractional Laplacian is its nonlocal property,
it turns out from several technical reasons that studying our nonlocal equation \eqref{1.3} directly is not suitable for establishing Theorem \ref{thm1.2}.
So different from \cite{cps,cs1}, like \cite{cs,yyy}, we need to
realize the nonlocal operator $(-\Delta)^s$ in $\Om$ through a local problem in $\Om\times(0,\infty)$.

To explain this, we have to introduce some more function spaces on
$\mathcal{D}=\Om\times(0,\infty)$, where $\Om$ is either a smooth bounded domain or $\R^N$.
If $\Om$ is bounded, then we define the function space $H^1_0(t^{1-2s},\mathcal{D})$
as the completion of
$$
C_{0,L}^{\infty}(\mathcal{D}):=\{\bar{u}\in C^{\infty}(\bar{\mathcal{D}}): \bar{u}=0\,
\text{on}\,\partial_L\mathcal{D}=\partial\Om\times[0,\infty)\}
$$
with respect to the norm
\begin{equation}\label{1.7}
\|\bar{u}\|_{H^1_0(t^{1-2s},\mathcal{D})}
=\Big(\ds\int_{\mathcal{D}}t^{1-2s}|\nabla \bar{u}|^2dxdt\Big)^{\frac{1}{2}}.
\end{equation}
Then it is a Hilbert space endowed with the inner product
$$
(\bar{u},\bar{v})_{H^1_0(t^{1-2s},\mathcal{D})}
=\ds\int_{\mathcal{D}}t^{1-2s}\nabla \bar{u}\nabla \bar{v}dxdt.
$$
In the same manner, we define the space $D^1(t^{1-2s},\R_+^{N+1})$ as the completion of
$C_0^{\infty}(\overline{\R_+^{N+1}})$ with respect to the norm
$$
\|\bar{u}\|_{D^1(t^{1-2s},\R_+^{N+1})}
=\Big(\ds\int_{\R_+^{N+1}}t^{1-2s}|\nabla \bar{u}|^2dxdt\Big)^{\frac{1}{2}}.
$$
Recall that if $\Om$ is a smooth bounded domain, then
it is verified that (see  \cite{cscs}, Proposition 2.1; \cite{cdds}, Proposition 2.1; \cite{t1}, Section 2)
$$H_0^s(\Om)=\{u=tr|_{\Om\times\{0\}}\bar{u}:\bar{u}\in H^1_0(t^{1-2s},\mathcal{D})\},$$
and
\begin{equation}\label{1.8}
\|\bar{u}(\cdot,0)\|_{H^s_0(\Om)}
\leq C\|\bar{u}\|_{H^1_0(t^{1-2s},\mathcal{D})}
\end{equation}
for some $C>0$ independent of $\bar{u}\in H^1_0(t^{1-2s},\mathcal{D})$. Similarly, it holds by taking trace that
$$D^s(\R^N)=\{u=tr|_{\R^N\times\{0\}}\bar{u}:\bar{u}\in D^1(t^{1-2s},\R_+^{N+1})\},$$
and
\begin{equation}\label{1.9}
\|\bar{u}(\cdot,0)\|_{D^s(\R^N)}
\leq C\|\bar{u}\|_{D^1(t^{1-2s},\R_+^{N+1})}
\end{equation}
for some $C>0$ independent of $\bar{u}\in D^1(t^{1-2s},\R_+^{N+1})$.

Now we are ready to consider the fractional harmonic extension of a function $u$ defined in $\Om$,
where $\Om$ is either a smooth bounded domain or $\R^N$. By the celebrated results of Caffarelli and
Silvestre \cite{cscs} (for $\R^N$) and Cabr\'{e} and Tan \cite{ct} (for bounded domains, see also \cite{bc,s,t1}), if we set $\bar{u}\in H^1_0(t^{1-2s},\mathcal{D})$ (or $D^1(t^{1-2s},\R_+^{N+1})$) as the unique solution of the equation
\begin{equation}\label{1.10}
\left\{%
\begin{array}{ll}
    div(t^{1-2s}\nabla \bar{u})=0, & \hbox{$\text{in}~ \mathcal{D}$},\vspace{0.1cm} \\
   \bar{u}=0,\,\, &\hbox{$\text{on}~\partial_L \mathcal{D}$},\vspace{0.1cm} \\
   \bar{u}(x,0)=u(x)&\hbox{$\text{for}~x\in \Om$} \\
\end{array}%
\right.
\end{equation}
for some fixed function $u\in H^s_0(\Om)$ (or $D^s(\R^N)$), then
$$\mathcal{A}_s\bar{u}:=-d_s\lim\limits_{t\rightarrow0^+}t^{1-2s}\frac{\partial \bar{u}}{\partial t}(x,t)\,\,\,\text{for}\,x\in \Om$$
is well defined and one must have
$$
(-\Delta)^su=\mathcal{A}_s \bar{u},
$$
with $$d_s:=\frac{2^{1-2s}\Gamma(1-s)}{\Gamma(s)}.$$
Without loss of generality, we may assume throughout this paper that $d_s=1$, that is,
\begin{equation}\label{1.11}
(-\Delta)^su=\mathcal{A}_s \bar{u}=-\lim\limits_{t\rightarrow0^+}t^{1-2s}\frac{\partial \bar{u}}{\partial t}(x,t).
\end{equation}
We call this $\bar{u}$ the $s$-harmonic extension of $u$ and we point out that by a density argument, \eqref{1.11} is satisfied
in weak sense for $u\in H^s_0(\Om)$ (or $D^s(\R^N)$). In other words, for any $u,\phi \in H^s_0(\Om)$ (or $D^s(\R^N)$), there holds
$$\left \langle u,\phi\right\rangle_{H_0^s(\Om)}=\left \langle \bar{u},\bar{\phi}\right\rangle_{H^1_0(t^{1-2s},\mathcal{D})}.$$
Thus the trace inequality \eqref{1.8} is improved as
\begin{equation}\label{1.12}
\|\bar{u}(\cdot,0)\|_{H^s_0(\Om)}
=\|\bar{u}\|_{H^1_0(t^{1-2s},\mathcal{D})}.
\end{equation}

Therefore from the above analysis, we can deduce that if a function $u$ is a weak solution to the nonlocal problem
\begin{equation}\label{1.13}
\left\{%
\begin{array}{ll}
    (-\Delta)^s u=g, & \hbox{$\text{in}~ \Omega$},\vspace{0.1cm} \\
   u=0,\,\, &\hbox{$\text{on}~\partial \Omega$}, \\
\end{array}%
\right.
\end{equation}
if and only if its $s$-harmonic extension $\bar{u}$ is a weak solution to the local problem
\begin{equation}\label{1.14}
\left\{%
\begin{array}{ll}
    div(t^{1-2s}\nabla \bar{u})=0, & \hbox{$\text{in}~ \mathcal{D}$},\vspace{0.1cm} \\
   \bar{u}=0,\,\, &\hbox{$\text{on}~\partial_L \mathcal{D}$},\vspace{0.1cm} \\
   \mathcal{A}_s\bar{u}=g(x),&\hbox{$\text{on}~ \Om \times \{0\}$}, \\
\end{array}%
\right.
\end{equation}
where $g\in L^{\frac{2N}{N+2s}}(\Om)$. Moreover, we say that a function $u\in H^s_0(\Om)$ is a weak solution of
\eqref{1.13} provided
\begin{equation}\label{1.15}
  \ds\int_{\Om}(-\Delta)^{\frac{s}{2}} u(-\Delta)^{\frac{s}{2}}\phi dx=\ds\int_{\Om}g\phi dx
  \end{equation}
for all $\phi\in H^s_0(\Om)$ and a function $\bar{u}$ is a weak solution of \eqref{1.14} if
\begin{equation}\label{1.16}
\ds\int_{\mathcal{D}}t^{1-2s}\nabla \bar{u}\nabla \bar{\phi}dxdt=\ds\int_{\Om}g\bar{\phi}(x,0)dx
\end{equation}
for all $\bar{\phi}\in H^1_0(t^{1-2s},\mathcal{D})$.

Hence, as mentioned before, rather than studying the nonlocal problem \eqref{1.1} directly, it is better to consider
the so-called $s$-harmonic extension problem
\begin{equation}\label{1.17}
\left\{%
\begin{array}{ll}
    div(t^{1-2s}\nabla \bar{u})=0, & \hbox{$\text{in}~ \mathcal{D}$},\vspace{0.1cm} \\
   \bar{u}=0,\,\, &\hbox{$\text{on}~\partial_L \mathcal{D}$},\vspace{0.1cm} \\
   \mathcal{A}_s\bar{u}=\ds\frac{\mu u}{|x|^{2s}}+|u|^{2^*_s-2}u+a u,&\hbox{$\text{on}~ \Om \times \{0\}$}. \\
\end{array}%
\right.
\end{equation}
By virtue of considering \eqref{1.17}, one can easily obtain the decomposition of
approximating solutions  and establish a local Pohozaev identity in small balls which may contain the origin.
Then applying this identity, we can exclude the possibility of concentration and prove the strong convergence of
approximating solutions.

Theorems \ref{thm1.1} and \ref{thm1.2}
extend the results in \cite{cs,yyy} to the fractional Laplacian
problem with Hardy term. We want to stress that
it is more difficult to obtain the estimates
in order to prove these results for \eqref{1.1}.
Like \cite{cs,yyy}, the main difficulty in the study
of \eqref{1.17} is that we need to carry out the boundary
estimates. This is greatly different
from the usual Laplacian equations studied in \cite{cps,cs1,cs}
which mainly involving the interior estimates.

Throughout this paper, we denote the norm of  $H^{s}_{0}(\Omega)$ by
$\|u\|=\Bigl(\ds\int_{\Omega}\big|(-\Delta)^{\frac{s}{2}} u\big|^2dx\Bigl)^{\frac{1}{2}}$;
the norm of $L^{q}(\Omega)(1\leq q<\infty)$ by
$\|u\|_q=\Bigl(\ds\int_{\Omega}|u|^qdx\Bigl)^{\frac{1}{q}}$;
the norm of $L^{q}(t^{1-2s},\Omega)(1\leq q<\infty)$ by
$\|u\|_{L^q(t^{1-2s},\Omega)}=\Bigl(\ds\int_{\Omega}t^{1-2s}|u|^qdxdt\Bigl)^{\frac{1}{q}}$; moreover, we denote
$B_r^N(x):=\big\{y\in\R^N:\,|y-x|\leq r\big\},$
$B_r^{N+1}(x):=\big\{z=(y,t)\in\R^{N+1}_+:\,|z-(x,0)|\leq r\big\},$
and positive constants (possibly different) by $C$. For simplicity, sometimes we also write
$B_r^N(x)$ as $B_r(x)$.

 The organization of our paper is as follows.
  In Section 2, we will give some integral estimates.
  In Section 3, we obtain some estimates on safe regions. We will  prove our main result in Section 4.
  In order that we can give a clear line of our framework, we will  list  some estimates for linear problems with
  Hardy potential, an iteration result, a decay estimate, a local pohozaev identity and the decomposition of
  approximating solutions in Appendices A, B, C and D.

\section{Some Integral estimates}

For any $\Lambda>0$ and $x\in \mathbb{R}^N$, we define
\[
\rho_{x,\Lambda}(u)=\Lambda^{\frac{N}{2^{*}_s}}u\bigl(\Lambda
(\cdot-x)\bigr),\,\,\,\,\,u\in H^{s}_{0}(\Omega).
\]

Let $u_{n}$ be a solution of \eqref{1.3} with $\epsilon=\epsilon_{n}\rightarrow 0$,
satisfying $\|u_{n}\|\leq C$ for some constant $C$ independent of $n$, by Proposition
\ref{propc.1}, $u_{n}$ can be decomposed as
$$
u_{n}=u_{0}+\sum_{j=1}^{m}\rho_{0,\Lambda_{n,j}}(U_{j})
+\sum_{j=m+1}^{h}\rho_{x_{n,j},\Lambda_{n,j}}(U_{j})+\omega_{n}.
$$
In this section, we will prove a Brezis-Kato type estimate (see
\cite{bk}).

In order to prove the strong convergence of $u_{n}$ in
$H^{s}_{0}(\Omega)$, we only need to show that the bubbles
$\rho_{x_{n,j},\Lambda_{n,j}}(U_{j})$ will not appear in the
decomposition of $u_{n}$.

Among all the bubbles $\rho_{x_{n,j},\Lambda_{n,j}}(U_{j})$, we can
choose a bubble, such that this bubble has the slowest concentration
rate. That is, there is $j_{0}$ such that the corresponding $\Lambda_{n,j_{0}}$ is the lowest order
infinity among all the $\Lambda_{n,j}$ appearing in the bubbles. For
simplicity, we denote $\Lambda_{n}$ the slowest concentration rate
and $x_{n}$ the corresponding concentration point.

\begin{rem}\label{remh2.1}
Since $\ds\frac{1}{|x|^{2s}}\in C^{2s}(\Omega\setminus B_{\delta}(0))$ for any $\delta>0$ small,
we know that $u_{n},u_{0},U_{j}\in C^{2s}(\Omega\setminus B_{\delta}(0))$.
As a result, $\omega_{n}\in C^{2s}(\Omega\setminus B_{\delta}(0))$ for any $\delta>0$ small.
\end{rem}

For any $p_{2}<2^{*}_s<p_{1},\alpha>0$ and $\Lambda\geq0$, we
consider the following relation:
\begin{equation}\label{2.1}
\left\{%
\begin{array}{ll}
   ||u_{1}||_{p_{1}}\leq \alpha,\vspace{0.1cm} \\
  ||u_{2}||_{p_{2}}\leq \alpha\Lambda^{\frac{N}{2^{*}_s}-\frac{N}{p_{2}}}. \\
\end{array}%
\right.
\end{equation}
Define
$$
\|u\|_{p_{1},p_{2},\Lambda}=\inf\bigl\{\alpha>0:\text{there~are}\,\,u_{1}\,\,\text{and}\,\,u_{2},\,\,\text{such~that}\,\,
\eqref{2.1} \,\,\text{holds~and}\,\,|u|\leq u_{1}+u_{2}\bigl\}.
$$

To deal with the Hardy potential, we need another norm. Consider the following relation:
\begin{equation}\label{mz}
\left\{%
\begin{array}{ll}
  | |u_{1}||_{*,p_{1}}\leq \alpha,\vspace{0.1cm} \\
  ||u_{2}||_{*,p_{2}}\leq \alpha\Lambda^{\frac{N}{2^{*}_s}-\frac{N}{p_{2}}}, \\
\end{array}%
\right.
\end{equation}
where
$$
 ||u||_{*,p}= ||u||_{p}+\Big(\mu\int_{\Omega}\frac{|u|^{\frac{2p}{2^{*}_s}}}{|x|^{2s}}dx\Big)^{\frac{2^{*}_s}{2p}}.
$$

Define
$$
\|u\|_{*,p_{1},p_{2},\Lambda}=\inf\bigl\{\alpha>0:\text{there~are}\,\,u_{1}\,\,\text{and}\,\,u_{2},\,\,\text{such~that}\,\,
\eqref{mz} \,\,\text{holds~and}\,\,|u|\leq u_{1}+u_{2}\bigl\}.
$$

From the definitions, it is easy to see that $\|u\|_{p_{1},p_{2},\Lambda}\leq \|u\|_{*,p_{1},p_{2},\Lambda}.$

Let $w_{n}(x)=|u_{n}(x)|$ in $\Omega$; $w_{n}(x)=0$ in $\R^{N}\backslash\Omega$. Then it is easy to check that $w_{n}$ satisfies the following inequality
\begin{equation}\label{h2.4}
    \int_{\R^{N+1}_{+}}t^{1-2s}\nabla \bar{w}_{n}\nabla \bar{\phi}dxdt-\mu\int_{\R^{N}}\frac{w_{n}\phi}{|x|^{2s}}dx\leq
    \int_{\R^{N}}\big(2w_{n}^{2^{*}_s-1}+A\big)\phi dx ,\forall \bar{\phi}\in H^{1}(t^{1-2s},\R^{N+1}_{+}),\bar{\phi}\geq 0,
\end{equation}
where $A>0$ is a large constant.

The main result of this section is the following proposition.

\begin{prop}\label{prop2.1}
Let $w_{n}$ be a solution of \eqref{h2.4}. For any $p_{1},p_{2}\in \big(\frac{2^{*}_s}{2},\frac{2^*_s\sqrt{\bar{\mu}}}{\sqrt{\bar{\mu}}-\sqrt{\bar{\mu}-\mu}}\big)$
satisfying $p_{2}<2^{*}_s<p_{1}$, there is a
constant $C$, depending on $p_{1}$ and $p_{2}$, such that
$$
\|w_{n}\|_{*,p_{1},p_{2},\Lambda_{n}}\leq C.
$$
\end{prop}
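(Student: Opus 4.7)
The plan is a Brezis-Kato / Moser iteration carried out on the extended inequality \eqref{h2.4}, with the fractional Hardy inequality used to absorb the Hardy term. Starting from the base estimate $\|w_n\|_{*,2^*_s,2^*_s,\Lambda_n}\le C$ (which follows from $\|u_n\|_{H^s_0}\le C$, the critical trace--Sobolev embedding, and \eqref{1.2.2}), I would iteratively produce a sequence of decompositions $w_n\le u_1^{(k)}+u_2^{(k)}$ whose exponents $(p_1^{(k)},p_2^{(k)})$ spread outward from $2^*_s$, with $p_1^{(k)}\nearrow$ and $p_2^{(k)}\searrow$, until any admissible pair is attained.

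For the inductive step, fix $\beta>0$ and test \eqref{h2.4} against $\bar\phi$, the $s$-harmonic extension of $\phi=\min(w_n,T)^{2\beta}w_n$; on letting $T\to\infty$ by monotone convergence and using the chain rule on the gradient term one obtains
\begin{equation*}
\int_{\R^{N+1}_+} t^{1-2s}\bigl|\nabla(\bar w_n^{\beta+1})\bigr|^2\,dx\,dt \le C_\beta\Bigl(\mu\int_{\R^N}\frac{w_n^{2(\beta+1)}}{|x|^{2s}}\,dx + \int_{\R^N} w_n^{2^*_s+2\beta}\,dx + A\int_{\R^N} w_n^{2\beta+1}\,dx\Bigr).
\end{equation*}
Applying the trace--Sobolev inequality together with the fractional Hardy inequality \eqref{1.2.2} to $\bar w_n^{\beta+1}$, and then taking a convex combination with weights $(\theta,1-\theta)$ satisfying $(1-\theta)\bar\mu>\mu$ (possible since $\mu<\Upsilon_s<\bar\mu$), the Hardy integral on the right is absorbed into the left, yielding
\begin{equation*}
\|w_n^{\beta+1}\|_{L^{2^*_s}}^2 + \Bigl(1-\tfrac{\mu}{\bar\mu}\Bigr)\int_{\R^N}\frac{w_n^{2(\beta+1)}}{|x|^{2s}}\,dx \le C\int_{\R^N} w_n^{2^*_s+2\beta}\,dx + C_A\int_{\R^N} w_n^{2\beta+1}\,dx.
\end{equation*}

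The critical integral on the right is then handled by writing $w_n^{2^*_s+2\beta}=w_n^{2^*_s-2}\cdot w_n^{2(\beta+1)}$ and inserting the inductive decomposition $w_n\le u_1^{(k)}+u_2^{(k)}$ into the factor $w_n^{2^*_s-2}$. The $u_1^{(k)}$--contribution is bounded by constants via H\"older, using $\|u_1^{(k)}\|_{*,p_1^{(k)}}\le C$; the $u_2^{(k)}$--contribution picks up precisely the scaling factor $\Lambda_n^{N/2^*_s-N/p_2^{(k)}}$ from $\|u_2^{(k)}\|_{*,p_2^{(k)}}\le C\Lambda_n^{N/2^*_s-N/p_2^{(k)}}$. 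Choosing $\beta$ so that $p_1^{(k+1)}:=2^*_s(\beta+1)>p_1^{(k)}$ and the companion exponent appearing on the Hardy-weighted piece satisfies $p_2^{(k+1)}<p_2^{(k)}$ produces the new decomposition with improved exponents, and the Hardy summand on the left supplies the weighted $L^{2p/2^*_s}$ piece of the new $*$-norm.

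The main obstacle is the quantitative bookkeeping: one must verify that after finitely many iterations the pair $(p_1^{(k)},p_2^{(k)})$ exhausts every admissible pair in $\bigl(\tfrac{2^*_s}{2},\,2^*_s\sqrt{\bar\mu}/(\sqrt{\bar\mu}-\sqrt{\bar\mu-\mu})\bigr)$ and that the strict Hardy absorption $(1-\theta)\bar\mu>\mu$ survives up to this endpoint. The endpoint is exactly the largest exponent for which the Hardy ground-state profile $|x|^{-(N-2s)/2+\alpha}$ (with $\Upsilon_\alpha=\mu$) lies in the weighted space $L^{2p/2^*_s}(|x|^{-2s}\,dx)$, so the iteration cannot cross it; the explicit form of $\bar\mu$ from \eqref{1.2} together with the sharp Hardy inequality are exactly what is needed to show that every interior pair is reached. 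Remark~\ref{remh2.1} confines the delicate part of the argument to a neighborhood of the origin, where the above Hardy absorption provides the decisive control.
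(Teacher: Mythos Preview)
Your Moser-iteration scheme has a structural gap: the test-function step you describe yields a \emph{single} estimate of the form
\[
\|w_n^{\beta+1}\|_{L^{2^*_s}}^2+\int_{\R^N}\frac{w_n^{2(\beta+1)}}{|x|^{2s}}\,dx\le\cdots,
\]
but it does not produce a \emph{decomposition} $w_n\le u_1^{(k+1)}+u_2^{(k+1)}$, which is what the definition of $\|\cdot\|_{*,p_1,p_2,\Lambda_n}$ demands. Your claim that a ``companion exponent'' $p_2^{(k+1)}$ appears on the Hardy-weighted piece is unfounded: both terms on the left carry the same power $2(\beta+1)$ of $w_n$, so one value of $\beta$ yields one output exponent, not a pair straddling $2^*_s$. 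Moreover, at the base step your decomposition is $u_1^{(0)}=w_n$, $u_2^{(0)}=0$; then the ``$u_1$-contribution'' to the critical integral is $\int w_n^{2^*_s-2}\,w_n^{2(\beta+1)}$, exactly the integral you want to estimate, and $\|w_n^{2^*_s-2}\|_{L^{N/2s}}=\|w_n\|_{L^{2^*_s}}^{2^*_s-2}$ is bounded but not small, so it cannot be absorbed into the left-hand side. No bootstrap starts.

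The paper's argument is organized very differently and its essential input is the profile decomposition $u_n=u_0+\sum_j\rho_{x_{n,j},\Lambda_{n,j}}(U_j)+\omega_n$ (Proposition~\ref{propd.1}). In Lemma~\ref{lem2.5} one writes $(-\Delta)^s w_n-\mu|x|^{-2s}w_n\le (a_0+a_1+a_2)w_n+A$ with $a_i=C|u_{n,i}|^{4s/(N-2s)}$, solves three auxiliary linear problems $w=G(a_i w_n)$, and estimates each piece separately: the $a_0$- and $a_1$-pieces by direct computation on the explicit profiles $u_0$ and $U_j$ via Lemmas~\ref{lema.2}--\ref{lema.3} (this is precisely where the $\Lambda_n^{N/2^*_s-N/p_2}$ scaling enters), and the $a_2$-piece by the genuine smallness $\|a_2\|_{L^{N/2s}}=\|\omega_n\|_{L^{2^*_s}}^{4s/(N-2s)}\to0$ together with Lemma~\ref{lem2.3} (this is where absorption succeeds). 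The maximum principle then furnishes the actual splitting $w_n\le w_1+w_2$. Lemma~\ref{lem2.4} iterates by again solving two linear problems with right-hand sides $v_i^{2^*_s-1}$ and invoking Lemma~\ref{lema.2}. Without the profile decomposition you have no source for either the smallness that makes absorption possible or the correct $\Lambda_n$-scaling required of the second piece; both are indispensable, and neither is supplied by your scheme.
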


To prove Proposition \ref{prop2.1}, we should show the following three lemmas.

\begin{lem}\label{lem2.3}
Let $w$ be the solution of
$$
\left\{%
\begin{array}{ll}
    (-\Delta)^s w-\ds\frac{\mu w}{|x|^{2s}}=a(x)v, & \hbox{$\text{in}~ \Omega$},\vspace{0.1cm} \\
   w=0,\,\, &\hbox{$\text{on}~\partial \Omega$}, \\
\end{array}%
\right.
$$
where $a(x)\geq 0$ and $v\geq 0$ are functions satisfying $a,v\in
C^{2s}(\Omega\setminus B_{\delta}(0))$ for any $\delta>0.$ Then for any
$\frac{N}{N-2s}<p_{2}<2^{*}_s<p_{1}$ and $0\leq \mu<\bar{\mu}$ satisfying $p_{1}<\frac{2^*_s\sqrt{\bar{\mu}}}{\sqrt{\bar{\mu}}-\sqrt{\bar{\mu}-\mu}}$, there is a constant
$C=C(p_{1},p_{2})$ such that for any $\Lambda\geq 1$,
$$
\|w\|_{*,p_{1},p_{2},\Lambda}\leq
C||a||_{\frac{N}{2s}}||v||_{p_{1},p_{2},\Lambda}.
$$
\end{lem}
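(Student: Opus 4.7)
The strategy is to exploit linearity of $(-\De)^s-\mu/|x|^{2s}$ and to split the bound on $\|w\|_{*,p_1,p_2,\Lambda}$ into two separate Lebesgue estimates for the Hardy operator, one against an $L^{p_1}$ piece of $v$ and one against an $L^{p_2}$ piece, each produced by the linear $L^p$-theory developed in Appendix A.

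First, I would fix $\al > \|v\|_{p_1,p_2,\Lambda}$ and pick a decomposition $0\le v\le v_1+v_2$ with $\|v_1\|_{p_1}\le\al$ and $\|v_2\|_{p_2}\le\al\Lambda^{N/2^*_s-N/p_2}$. By linearity I set $w=w_1+w_2$, where each $w_i\ge 0$ solves
$$
(-\De)^s w_i-\ds\frac{\mu w_i}{|x|^{2s}}=a(x)v_i \ \text{in}\ \Om, \qquad w_i=0\ \text{on}\ \partial\Om,
$$
so that $w_1+w_2$ is an admissible splitting for $\|w\|_{*,p_1,p_2,\Lambda}$.

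Second, I apply H\"older's inequality with exponent $N/(2s)$,
$$
\|a v_i\|_{q_i}\le\|a\|_{N/(2s)}\|v_i\|_{p_i}, \qquad \tfrac{1}{q_i}=\tfrac{2s}{N}+\tfrac{1}{p_i},
$$
so that the Sobolev scaling $N/q_i-N/p_i=2s$ designates $p_i$ as the natural target exponent. The Appendix A linear theory for the fractional Hardy equation, which holds in the range $p_i\in\bigl(\tfrac{N}{N-2s},\tfrac{2^*_s\sqrt{\bar\mu}}{\sqrt{\bar\mu}-\sqrt{\bar\mu-\mu}}\bigr)$, then delivers $\|w_i\|_{*,p_i}\le C\|a v_i\|_{q_i}\le C\|a\|_{N/(2s)}\|v_i\|_{p_i}$. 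The $\Lambda$-scaling of the $p_2$-estimate is preserved transparently because H\"older does not touch $\Lambda$, giving $\|w_1\|_{*,p_1}\le C\|a\|_{N/(2s)}\al$ and $\|w_2\|_{*,p_2}\le C\|a\|_{N/(2s)}\al\Lambda^{N/2^*_s-N/p_2}$. Taking the infimum over admissible $\al$ closes the estimate.

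The main obstacle is the validity of the underlying linear estimate across the full prescribed range of exponents, in particular up to the sharp upper bound $p_1<\tfrac{2^*_s\sqrt{\bar\mu}}{\sqrt{\bar\mu}-\sqrt{\bar\mu-\mu}}$. This threshold is natural because the Green's function of $(-\De)^s-\mu/|x|^{2s}$ inherits an $|x|^{-\beta_\mu}$-type singularity at the origin with $\beta_\mu=\tfrac{N-2s}{2}-\sqrt{\bar\mu}+\sqrt{\bar\mu-\mu}$, and this singularity is $L^p$-integrable on a neighborhood of the origin only for $p$ strictly below this threshold. I would therefore cite the precise Appendix A statement, paying particular attention to the Hardy-weighted piece $\bigl(\mu\int_{\Om}|w|^{2p/2^*_s}/|x|^{2s}\,dx\bigr)^{2^*_s/(2p)}$ inside $\|\cdot\|_{*,p}$; controlling it from the same right-hand side $av_i$ is the technical heart of Appendix A and is what makes the above two-piece reduction valid for the $*$-norm rather than just for the bare $L^p$-norm.
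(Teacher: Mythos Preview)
Your approach matches the paper's: split $v\le v_1+v_2$, solve the two linear problems with right-hand sides $a v_i$, and invoke the Appendix~A estimate $\|w_i\|_{*,p_i}\le C\|a\|_{N/2s}\|v_i\|_{p_i}$ (this is exactly Lemma~\ref{lema.1}, which already contains the H\"older step you spell out, so your detour through $q_i$ and a Lemma~\ref{lema.2}-type bound is unnecessary but harmless).

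There is one genuine slip. You write ``by linearity I set $w=w_1+w_2$'', but the decomposition only gives $v\le v_1+v_2$, not equality, so linearity does not yield $w=w_1+w_2$. What you need is the comparison (maximum) principle for the operator $(-\Delta)^s-\mu|x|^{-2s}$: since $a\ge 0$ and $v\le v_1+v_2$, the right-hand side satisfies $a v\le a(v_1+v_2)$, and because $0\le\mu<\bar\mu$ the operator is coercive, giving $0\le w\le w_1+w_2$. This is exactly how the paper closes the argument. Once you replace ``linearity'' by ``maximum principle'' the proof is complete and identical to the paper's.
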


\begin{proof}
For any small $\theta>0,$ let $v_{1},v_{2}\in
C^{2s}(\Omega\setminus B_{\delta}(0))$ and $v_{1},v_{2}\geq 0$ be the
functions such that $v\leq v_{1}+v_{2} $ and \eqref{2.1} holds with
$\alpha=\|v\|_{p_{1},p_{2},\Lambda}+\theta$. Consider
$$
\left\{%
\begin{array}{ll}
    (-\Delta)^s w_{i}-\ds\frac{\mu w_{i}}{|x|^{2s}}=a(x)v_{i}, & \hbox{$\text{in}~ \Omega$},\vspace{0.1cm} \\
   w_{i}=0,\,\, &\hbox{$\text{on}~\partial \Omega$}. \\
\end{array}%
\right.
$$

It follows from Lemma \ref{lema.1} that
\begin{equation}\label{w}
||w_{i}||_{*,p_{i}}\leq
C||a||_{\frac{N}{2s}}||v_{i}||_{p_{i}},\,\,i=1,2.
\end{equation}

On the other hand, by the maximum  principle, we deduce $w\le w_1+w_2$. So the result follows.
\end{proof}

\begin{rem}\label{remh2.4}
We will let $a(x)=|u_{0}|^{\frac{4s}{N-2s}}$ or $a(x)=|\rho_{0,\Lambda_{n,j}}|^{\frac{4s}{N-2s}},j=1,2,\cdots,m,$ in Lemma \ref{lem2.3} to obtain some desired estimates for $w_{n}$. Here $a(x)$ may have singularity at $\{x=0\}$. So, in Lemma \ref{lem2.3} we only assume that $a(x)$ and $v(x)$ belong to $C^{2s}(\Omega\setminus B_{\delta}(0)).$
\end{rem}

\begin{lem}\label{lem2.4}
Let $w\geq 0$ be a weak solution of
$$
\left\{%
\begin{array}{ll}
    (-\Delta)^s w-\ds\frac{\mu w}{|x|^{2s}} =2v^{2^{*}_s-1}+A, & \hbox{$\text{in}~ \Omega$},\vspace{0.1cm} \\
   w=0,\,\, &\hbox{$\text{on}~\partial \Omega$}. \\
\end{array}%
\right.
$$
For any
$p_{1},p_{2}\in\big(\frac{N+2s}{N-2s},\frac{N}{2s}\frac{N+2s}{N-2s}\big)$ with
$p_{2}<2^{*}_s<p_{1},$ let $q_{i}$ be given by
$$
\frac{1}{q_{i}}=\frac{N+2s}{(N-2s)p_{i}}-\frac{2s}{N},i=1,2.
$$
If $q_{1},q_{2}<\frac{2^*_s\sqrt{\bar{\mu}}}{\sqrt{\bar{\mu}}-\sqrt{\bar{\mu}-\mu}}$, then there is a constant $C=C(p_{1},p_{2})$ such that for any
$\Lambda\geq 1,$
$$
\|w\|_{*,q_{1},q_{2},\Lambda}\leq
C\|v\|^{2^{*}_s-1}_{p_{1},p_{2},\Lambda}+C.
$$
\end{lem}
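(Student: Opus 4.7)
The plan is to reduce the nonlinear estimate on $w$ to the linear Hardy-perturbed regularity result stated as Lemma \ref{lema.1}, by splitting the right-hand side $2v^{2^{*}_s-1}+A$ according to the decomposition defining $\|v\|_{p_1,p_2,\Lambda}$ and then invoking a comparison principle to split $w$ correspondingly.

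First I would, for a fixed small $\theta>0$, pick nonnegative $v_1,v_2$ with $v\leq v_1+v_2$, $\|v_1\|_{p_1}\leq\alpha$ and $\|v_2\|_{p_2}\leq\alpha\Lambda^{N/2^{*}_s-N/p_2}$, where $\alpha=\|v\|_{p_1,p_2,\Lambda}+\theta$. Since $2^{*}_s-1>1$, the elementary inequality $(a+b)^{2^{*}_s-1}\leq C(a^{2^{*}_s-1}+b^{2^{*}_s-1})$ gives $v^{2^{*}_s-1}\leq C(v_1^{2^{*}_s-1}+v_2^{2^{*}_s-1})$, and H\"older yields $v_i^{2^{*}_s-1}\in L^{p_i/(2^{*}_s-1)}(\Omega)$ with $\|v_i^{2^{*}_s-1}\|_{p_i/(2^{*}_s-1)}=\|v_i\|_{p_i}^{2^{*}_s-1}$. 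Let $w_1,w_2,w_3\geq 0$ be the weak solutions in $H_0^s(\Omega)$ of
$$
(-\Delta)^s w_i-\frac{\mu w_i}{|x|^{2s}}=C\,v_i^{2^{*}_s-1}\,\,(i=1,2),\qquad (-\Delta)^s w_3-\frac{\mu w_3}{|x|^{2s}}=A,
$$
with zero Dirichlet data on $\partial\Omega$. Since $\mu<\Upsilon_s\leq\bar{\mu}$, the operator $(-\Delta)^s-\mu|x|^{-2s}$ satisfies a weak maximum principle on $\Omega$, so $w\leq w_1+w_2+w_3$.

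Next I would apply Lemma \ref{lema.1} to each $w_i$. The exponent $q_i$ is chosen precisely so that $\frac{1}{q_i}=\frac{2^{*}_s-1}{p_i}-\frac{2s}{N}$, and the restrictions $p_i\in\bigl(\frac{N+2s}{N-2s},\frac{N}{2s}\frac{N+2s}{N-2s}\bigr)$ together with $q_i<\frac{2^{*}_s\sqrt{\bar{\mu}}}{\sqrt{\bar{\mu}}-\sqrt{\bar{\mu}-\mu}}$ place both source and target exponents in the admissible range for the Hardy-perturbed operator. Hence
$$
\|w_i\|_{*,q_i}\leq C\,\|v_i^{2^{*}_s-1}\|_{p_i/(2^{*}_s-1)}=C\,\|v_i\|_{p_i}^{2^{*}_s-1},\qquad i=1,2,
$$
and $\|w_3\|_{*,q_i}\leq C$ because $A$ is a bounded source on a bounded domain.

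A short computation using $2^{*}_s=\frac{2N}{N-2s}$ and $2^{*}_s-1=\frac{N+2s}{N-2s}$ confirms the scaling identity $(2^{*}_s-1)\bigl(\frac{N}{2^{*}_s}-\frac{N}{p_2}\bigr)=\frac{N}{2^{*}_s}-\frac{N}{q_2}$, so that $\|w_2\|_{*,q_2}\leq C\alpha^{2^{*}_s-1}\Lambda^{N/2^{*}_s-N/q_2}$, exactly matching the $\Lambda$-dependence required by \eqref{mz}. Combining the three estimates with $w\leq w_1+w_2+w_3$ and the definition of $\|\cdot\|_{*,q_1,q_2,\Lambda}$, then letting $\theta\to 0$, yields the claim. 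The genuine work is all packed into Lemma \ref{lema.1}: its admissible range reflects Herbst's inequality and the singularity rate $|x|^{-(N-2s)/2+\sqrt{\bar{\mu}-\mu}}$ of the fundamental solution of $(-\Delta)^s-\mu|x|^{-2s}$, and the constraint $q_i<\frac{2^{*}_s\sqrt{\bar{\mu}}}{\sqrt{\bar{\mu}}-\sqrt{\bar{\mu}-\mu}}$ is exactly what keeps the weighted piece of $\|\cdot\|_{*,q_i}$ finite. Once that is in hand, the rest is bookkeeping.
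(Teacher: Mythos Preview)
Your argument is essentially the same as the paper's: split $v$ according to the definition of $\|v\|_{p_1,p_2,\Lambda}$, use the maximum principle to dominate $w$ by solutions with separated sources, apply the linear $L^{\hat p}\to L^{q}$ regularity for $(-\Delta)^s-\mu|x|^{-2s}$ with $\hat p_i=p_i/(2^*_s-1)$, and check the scaling identity $(2^*_s-1)\bigl(\tfrac{N}{2^*_s}-\tfrac{N}{p_2}\bigr)=\tfrac{N}{2^*_s}-\tfrac{N}{q_2}$. The only slip is the citation: the estimate $\|w_i\|_{*,q_i}\le C\|v_i^{2^*_s-1}\|_{p_i/(2^*_s-1)}$ you use is precisely Lemma~\ref{lema.2} (source in $L^{\hat p}$, $1\le\hat p<N/2s$, gain to $L^{N\hat p/(N-2s\hat p)}$), not Lemma~\ref{lema.1}, which concerns right-hand sides of the form $a(x)v$ with $a\in L^{N/2s}$ and gives no exponent gain. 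The paper also absorbs the constant $A$ into the $w_1$ equation rather than introducing a third piece $w_3$, but that is purely cosmetic.
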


\begin{proof}
For any small $\theta>0$, let $v_{1}\geq 0$ and $v_{2}\geq 0$ be the
functions such that $v\leq v_{1}+v_{2}, $ and \eqref{2.1} holds with
$\alpha=\|v\|_{p_{1},p_{2},\Lambda}+\theta$.

Consider
$$
\left\{%
\begin{array}{ll}
    (-\Delta)^s w_{1}-\ds\frac{\mu w_{1}}{|x|^{2s}}=Cv_{1}^{2^{*}_s-1}+A,
    & \hbox{$\text{in}~ \Omega$},\vspace{0.1cm} \\
   w_{1}=0,\,\, &\hbox{$\text{on}~\partial \Omega,$} \\
\end{array}%
\right.
$$
and
$$
\left\{%
\begin{array}{ll}
    (-\Delta)^s w_{2}-\ds\frac{\mu w_{2} }{|x|^{2s}}=Cv_{2}^{2^{*}_s-1},
    & \hbox{$\text{in}~ \Omega$},\vspace{0.1cm} \\
   w_{2}=0,\,\, &\hbox{$\text{on}~\partial \Omega$},\\
\end{array}%
\right.
$$
where $C>0$ is a large constant. Then by the maximum principle, $w\le w_1+w_2$.

Let $\hat{p}_{i}=p_{i}\frac{N-2s}{N+2s}\in(1,\frac{N}{2s}),$ then $q_{i}=\frac{N\hat{p}_{i}}{N-2s\hat{p}_{i}},i=1,2.$
 By Lemma \ref{lema.2}, we have
 \begin{eqnarray*}
||w_{1}||_{*,q_{1}}
&\leq&C\big\|v_{1}^{2^{*}_s-1}+A\big\|_{\hat{p}_{1}}
\leq C\big(||v_{1}||^{2^{*}_s-1}_{p_{1}}+1\big)
\leq C\bigl[(\|v\|_{p_{1},p_{2},\Lambda}+\theta)^{2^{*}_s-1}+1\bigl],
\end{eqnarray*}
and
\begin{eqnarray*}
||w_{2}||_{*,q_{2}}&\leq&
C||v_{2}||^{2^{*}_s-1}_{p_{2}}
\leq
C\Bigl[(\|v\|_{p_{1},p_{2},\Lambda}+\theta)\Lambda^{\frac{N}{2^{*}_s}-\frac{N}{p_{2}}}\Bigl]^{2^{*}_s-1}
=C(\|v\|_{p_{1},p_{2},\Lambda}+\theta)^{2^{*}_s-1}\Lambda^{\frac{N}{2^{*}_s}-\frac{N}{q_{2}}},
\end{eqnarray*}
since
$$
\Bigl(\frac{N}{2^{*}_s}-\frac{N}{p_{2}}\Bigl)\frac{N+2s}{N-2s}=\frac{N}{2^{*}_s}-\frac{N}{q_{2}}.
$$
As a consequence, the result follows.
\end{proof}

\begin{lem}\label{lem2.5}
Let $w_{n}(x)=|u_{n}(x)|$ in $\Omega$; $w_{n}(x)=0$ in $\R^{N}\setminus\Omega$. Then there are
constants $C>0$ and $p_{1},p_{2}\in\big(\frac{2^{*}_s}{2},+\infty\big)$
with $p_{2}<2^{*}_s<p_{1}$ such that
$$
\|w_n\|_{*,p_{1},p_{2},\Lambda_{n}}\leq C.
$$
\end{lem}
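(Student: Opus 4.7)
The plan is to exploit the concentration-compactness decomposition of Proposition \ref{propc.1} directly. Recall
$$u_n = u_0 + \sum_{j=1}^m \rho_{0, \Lambda_{n,j}}(U_j) + \sum_{j=m+1}^h \rho_{x_{n,j}, \Lambda_{n,j}}(U_j) + \omega_n,$$
with $\omega_n \to 0$ strongly in $H^s_0(\Om)$ and, by the choice of $\Lambda_n$ as the slowest concentration rate, $\Lambda_{n,j} \geq \Lambda_n$ for every $j$. The triangle inequality gives
$$w_n \leq |u_0| + |\omega_n| + \sum_{j=1}^h |\rho_{x_{n,j}, \Lambda_{n,j}}(U_j)|,$$
and the strategy is to place the bubble sum in the $u_2$-slot and $|u_0| + |\omega_n|$ in the $u_1$-slot of the splitting defining $\|\cdot\|_{*,p_1,p_2,\Lambda_n}$.

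I will choose $p_2 \in (\tfrac{2^*_s}{2}, 2^*_s)$ and $p_1 \in (2^*_s, \tfrac{2^*_s\sqrt{\bar{\mu}}}{\sqrt{\bar{\mu}} - \sqrt{\bar{\mu} - \mu}})$, both close to $2^*_s$, so that the Hardy--Sobolev profiles $U_j$ --- which behave like $|x|^{-\alpha}$ near the origin for some $\alpha = \alpha(\mu) \in [0, \tfrac{N-2s}{2})$ determined by the Hardy exponent, and decay at infinity with a complementary power --- satisfy $\|U_j\|_{*,p_1}, \|U_j\|_{*,p_2} < \infty$. Direct computation shows that the norm scales as $\|\rho_{x, \Lambda}(U)\|_{*,p} = \Lambda^{N/2^*_s - N/p} \|U\|_{*,p}$ for bubbles centered at the origin (and an analogous bound holds for $x_{n,j}$ bounded away from the origin, since the Hardy weight becomes a bounded factor on their support). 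Combining this with $\Lambda_{n,j} \geq \Lambda_n$ and $N/2^*_s - N/p_2 < 0$ yields
$$\Big\| \sum_{j=1}^h |\rho_{x_{n,j}, \Lambda_{n,j}}(U_j)| \Big\|_{*,p_2} \leq \sum_{j=1}^h \Lambda_{n,j}^{N/2^*_s - N/p_2} \|U_j\|_{*,p_2} \leq C \Lambda_n^{N/2^*_s - N/p_2},$$
which is exactly the estimate required for the $u_2$-slot.

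The fixed limit $u_0$ solves the unperturbed equation \eqref{1.1}; a standard iteration using Lemma \ref{lema.1} with coefficient $a(x) = |u_0|^{4s/(N-2s)}$, whose $L^{N/2s}$-norm is controlled by Sobolev, boosts its integrability from $L^{2^*_s}$ up to $\|u_0\|_{*,p_1} < \infty$. For the remainder $\omega_n$, the strong convergence $\omega_n \to 0$ in $H^s_0(\Om)$ implies $\|\omega_n\|_{2^*_s} \to 0$; combining this with the equation that $\omega_n$ satisfies (to leading order), one performs a Brezis--Kato-type bootstrap via Lemma \ref{lema.1} to raise $\|\omega_n\|_{*,p_1}$ to a uniform bound, completing the control of $\|\,|u_0| + |\omega_n|\,\|_{*,p_1} \leq C$.

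The main obstacle will be the bootstrap step for $\omega_n$: the equation for $\omega_n$ contains cross-terms involving the highly concentrated profiles $\rho_{x_{n,j}, \Lambda_{n,j}}(U_j)$, and one must show that the linearized coefficients arising from these terms have $L^{N/2s}$-norms that remain bounded uniformly in $n$, so that the Brezis--Kato iteration gives uniform control. The hypothesis $\tfrac{2^*_s\sqrt{\bar{\mu}}}{\sqrt{\bar{\mu}} - \sqrt{\bar{\mu} - \mu}} > \tfrac{2N}{N-6s}$ ensures that the iteration has enough room to move $p_1$ strictly above $2^*_s$ within the admissible range.
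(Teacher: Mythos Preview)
Your proposal has a genuine gap in the treatment of $\omega_n$. You want to place $|u_0|+|\omega_n|$ in the $u_1$-slot, i.e.\ to show $\|\omega_n\|_{*,p_1}\le C$ for some $p_1>2^*_s$, and you claim this follows from a Brezis--Kato iteration based on Lemma~\ref{lema.1} once the linearized coefficients are bounded in $L^{N/2s}$. But that is precisely what Lemma~\ref{lema.1} does \emph{not} give: with a coefficient $a\in L^{N/2s}$ the map $v\mapsto G(av)$ preserves $L^p$, it does not improve integrability, so bounded (as opposed to small) $L^{N/2s}$-norms of the linearized coefficients yield no bootstrap. The cross-terms coming from the bubble profiles have $\|\,|\rho_{x_{n,j},\Lambda_{n,j}}(U_j)|^{4s/(N-2s)}\,\|_{N/2s}$ equal to a fixed positive constant (independent of $n$), so they can never be absorbed. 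Since the only a priori information on $\omega_n$ is $\|\omega_n\|_{H^s_0}\to 0$, and $\omega_n$ does not satisfy a clean equation with small coefficients, there is no mechanism in your outline to push $\omega_n$ above $L^{2^*_s}$ uniformly. Placing $|\omega_n|$ in the $u_2$-slot instead also fails, because that requires the quantitative decay $\|\omega_n\|_{*,p_2}\le C\Lambda_n^{N/2^*_s-N/p_2}$, and you have no rate for $\omega_n\to 0$.

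The paper avoids this difficulty entirely by never trying to estimate $\omega_n$ in a higher $L^p$-norm. Instead it works with $w_n=|u_n|$ as a whole and writes $(-\Delta)^s w_n-\mu|x|^{-2s}w_n\le (a_0+a_1+a_2)w_n+A$ with $a_i=C|u_{n,i}|^{4s/(N-2s)}$. The crucial point is that it is $a_2=C|\omega_n|^{4s/(N-2s)}$, not $\omega_n$ itself, that enters, and $\|a_2\|_{N/2s}\to 0$; hence Lemma~\ref{lem2.3} gives $\|G(a_2w_n)\|_{*,p_1,p_2,\Lambda_n}\le \tfrac12\|w_n\|_{*,p_1,p_2,\Lambda_n}$, which can be absorbed into the left-hand side. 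The bubble coefficient $a_1$ is handled separately via Lemma~\ref{lema.3}, producing the $p_2$-slot with the correct $\Lambda_n$-scaling, and $a_0$ gives the $p_1$-slot using the pointwise bound on $u_0$ from Proposition~\ref{propbb3}. Your direct splitting of $w_n$ into profile pieces is conceptually simpler but cannot close without this absorption trick.
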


\begin{proof}
From Proposition \ref{propc.1}, we have
$u_{n}=u_{0}+u_{n,1}+u_{n,2},$ where
$$
u_{n,1}=\sum_{j=1}^{m}\rho_{0,\Lambda_{n,j}}(U_{j})+\sum_{j=m+1}^{h}\rho_{x_{n,j},\Lambda_{n,j}}(U_{j}),
$$
and $u_{n,2}=\omega_{n}.$

Let $a_{i}=C|u_{n,i}|^{\frac{4s}{N-2s}},i=0,1,2,$ where $C>0$ is a large constant.
Then, we have
$$
(-\Delta)^s w_{n}-\frac{\mu w_{n}}{|x|^{2s}}\leq(a_{0}+a_{1}+a_{2})w_{n}+A.
$$

Let $w=G(v)$ be the solution of
$$
\left\{%
\begin{array}{ll}
    (-\Delta)^s w-\ds\frac{\mu w}{|x|^{2s}}=v,
    & \hbox{$\text{in}~ \Omega$},\vspace{0.1cm} \\
   w=0,\,\, &\hbox{$\text{on}~\partial \Omega$}. \\
\end{array}%
\right.
$$
Then, we have
\begin{eqnarray*}
w_{n}\leq
G(a_{0}w_{n}+A)+G(a_{1}w_{n})+G(a_{2}w_{n}).
\end{eqnarray*}

We first deal with the term $G(a_{0}w_{n}+A).$ Let $q>\frac{2N}{N+2s}$ such that
$q-\frac{2N}{N+2s}$ is so small that
$p_{1}:=\frac{Nq}{N-2sq}\in \Big(2^{*}_s,\frac{2^*_s\sqrt{\bar{\mu}}}{\sqrt{\bar{\mu}}-\sqrt{\bar{\mu}-\mu}}\Big).$
Then it follows from Lemma \ref{lema.2} that
\begin{equation}\label{*2.7}
\begin{array}{ll}
\big\|G(a_{0}w_{n}+A)\big\|_{*,p_{1}}&\leq C||a_{0}w_{n}+A||_{q}
\leq C+C\|a_{0}\|_{\frac{2^{*}_sq}{2^{*}_s-q}}\|w_{n}\|_{2^{*}_s}
\leq C+C\|a_{0}\|_{\frac{2^{*}_sq}{2^{*}_s-q}}.
\end{array}%
\end{equation}

By Proposition \ref{propbb3}, $|u_{0}(x)|\leq C|x|^{-(\frac{N-2s}{2}-\beta)}, \forall x\in \Om$. So we see that
if $q-\frac{2N}{N+2s}>0$ is small enough, then
$$
\Big(\frac{N-2s}{2}-\beta\Big)\frac{4s}{N-2s}\frac{2^{*}_sq}{2^{*}_s-q}<N.
$$
As a result,
$$
\ds\int_{\Omega}|a_{0}|^{\frac{2^{*}_sq}{2^{*}_s-q}}dx
\leq C\ds\int_{\Omega}|x|^{-\big(\frac{N-2s}{2}-\beta\big)\frac{4s}{N-2s}\frac{2^{*}_sq}{2^{*}_s-q}}dx
\leq C.
$$
Hence, we  have proved that there is a $p_{1}>2^{*}_s$ such that
\begin{equation}\label{h2.8}
\big\|G(a_{0}w_{n}+A)\big\|_{*,p_{1}}\leq C.
\end{equation}

Next we treat the term
$G(a_{1}w_{n}).$ Let
$p_{2}\in(\frac{N}{N-2s},2^{*}_s)$ be a constant. By Lemma
\ref{lema.3}, we get
\begin{eqnarray*}
\big\|G(a_{1}w_{n})\big\|_{*,p_{2}}&\leq&
C||a_{1}||_{r}||w_{n}||_{2^{*}_s}\leq C||a_{1}||_{r},
\end{eqnarray*}
where $r$ is determined by
$\frac{1}{p_{2}}=\frac{1}{r}+\frac{1}{2^{*}_s}-\frac{2s}{N}.$

But
$$
\int_{\Omega}|\rho_{x_{n,j},\Lambda_{n,j}}(U_{j})|^{\frac{4sr}{N-2s}}dx
=\Lambda_{n,j}^{2sr-N}\int_{\Omega_{x_{n,j},\Lambda_{n,j}}}
|U_{j}|^{\frac{4sr}{N-2s}}dx,
$$
where
$\Omega_{x,\Lambda}=\{\bar{x}:x_{n,j}+\Lambda^{-1}\bar{x}\in\Omega\}.$

For $j=m+1,\cdot\cdot\cdot,h,$ there
is a $C>0$ such that (see \cite{cs})
$$
|U_{j}(x)|\leq\frac{ C}{1+|x|^{N-2s}}.
$$

Therefore, for any $r\in\big(\frac{N}{4s},\frac{N}{2s}\big),$ we have
$$
\int_{\Omega_{x_{n,j},\Lambda_{n,j}}}|U_{j}|^{\frac{4sr}{N-2s}}dx\leq
C,\,\,\,j=m+1,\cdot\cdot\cdot,h.
$$

For $j=1,2,\cdots,m,$ by Lemma \ref{lema.1-1} we have
$$
U_{j}\in L^{p}_{loc}(\R^{N}),\forall p<\frac{2^*_s\sqrt{\bar{\mu}}}{\sqrt{\bar{\mu}}-\sqrt{\bar{\mu}-\mu}},
$$
and by Lemma \ref{lemb.1},
$$
|U_{j}(x)|\leq \frac{C}{|x|^{\frac{N-2s}{2}+\beta}},\,\,\,\forall|x|\geq 1.
$$
Note that $r\rightarrow \frac{N}{2s}$ as $p_{2}\rightarrow 2^{*}_s$. Now we choose $p_2$ close to $2^*_s$ so that
$$
\frac{4sr}{N-2s}\Big(\frac{N-2s}{2}+\beta\Big)>N,
$$
and
$$
\frac{4sr}{N-2s}<\frac{2^*_s\sqrt{\bar{\mu}}}{\sqrt{\bar{\mu}}-\sqrt{\bar{\mu}-\mu}}.
$$
As a result,
$$
\int_{\Omega_{x_{n,j},\Lambda_{n,j}}}|U_{j}|^{\frac{4sr}{N-2s}}dx\leq
C,\,\,\,j=1,2,\cdot\cdot\cdot,m.
$$
 Thus we have proved that there is
a $p_{2}<2^{*}_s$ close to $2^{*}_s$ such that
\begin{equation}\label{2.9}
\big\|G(a_{1}w_{n})\big\|_{*,p_{2}}
\leq
C\Lambda_{n}^{2s-\frac{N}{r}}=C\Lambda_{n}^{\frac{N}{2^{*}_s}-\frac{N}{p_{2}}}.
\end{equation}

Finally, we treat the term $G(a_{2}w_{n})$. It
follows from Lemma \ref{lem2.3} that
\begin{equation}\label{2.11}
\begin{array}{ll}
\|G(a_{2}w_{n})\|_{*,p_{1},p_{2},\Lambda_{n}}&\leq
C||a_{2}||_{\frac{N}{2s}}\|w_{n}\|_{p_{1},p_{2},\Lambda_{n}}
\leq \frac{1}{2}\|w_{n}\|_{*,p_{1},p_{2},\Lambda_{n}},
\end{array}
\end{equation}
since
$||a_{2}||_{\frac{N}{2s}}=||\omega_{n}||^{\frac{4s}{N-2s}}_{2^{*}_s}\rightarrow
0$ as $n\rightarrow\infty.$

From \eqref{h2.8}, \eqref{2.9} and \eqref{2.11}, we obtain
\begin{eqnarray*}
&&\|w_{n}\|_{*,p_{1},p_{2},\Lambda_{n}}\\&\leq&\|G(a_{0}w_{n}+A)\|_{*,p_{1},p_{2},\Lambda_{n}}
+\|G(a_{1}w_{n})\|_{*,p_{1},p_{2},\Lambda_{n}}
+\|G(a_{2}w_{n})\|_{*,p_{1},p_{2},\Lambda_{n}}\\
&\leq&\big\|G(a_{0}w_{n}+A)\big\|_{*,p_{1}}
+\bigl\|G(a_{1}w_{n})\bigl\|_{*,p_{2}}\Lambda_{n}^{\frac{N}{p_{2}}-\frac{N}{2^{*}_s}}
+\|G(a_{2}w_{n})\|_{*,p_{1},p_{2},\Lambda_{n}}
\\
&\leq&C+\frac{1}{2}\|w_{n}\|_{*,p_{1},p_{2},\Lambda_{n}}.
\end{eqnarray*}
Hence the result follows.
\end{proof}

\begin{proof}[\textbf{Proof of Proposition \ref{prop2.1}}]
 Since $w_{n}$ satisfies \eqref{h2.4},
we can use Lemmas \ref{lem2.4} and \ref{lem2.5} to prove that
$$
\|w_{n}\|_{*,p_{1},p_{2},\Lambda_{n}}\leq C
$$
holds for any $p_{1},p_{2}$ with
$p_{1},p_{2}\in \big(\frac{2^{*}_s}{2},\frac{2^*_s\sqrt{\bar{\mu}}}{\sqrt{\bar{\mu}}-\sqrt{\bar{\mu}-\mu}}\big)$
satisfying $p_{2}<2^{*}_s<p_{1}$.
\end{proof}

\section{ Estimates on safe regions}


Since the number of the bubbles of $u_{n}$ is finite, by Proposition
\ref{propc.1} we can always find a constant $\bar{C}>0,$ independent
of $n$, such that the region
$$
\Bigl(B^N_{(\bar{C}+5)\Lambda_{n}^{-\frac{1}{2}}}(x_{n})\backslash
B^N_{\bar{C}\Lambda_{n}^{-\frac{1}{2}}}(x_{n})\Bigl)\cap\,\Omega,
$$
does not contain any concentration point of $u_{n}$ for any $n$. We
call this region a safe region for $u_{n}.$

For $d=N,N+1$, let
$$
\mathcal
{A}^{d}_{n,1}:=\Bigl(B^d_{(\bar{C}+5)\Lambda_{n}^{-\frac{1}{2}}}(x_{n})\backslash
B^d_{(\bar{C}+1)\Lambda_{n}^{-\frac{1}{2}}}(x_{n})\Bigl)\cap\,\Omega\,(\text{or}\,\mathcal{D}),
$$
and
$$
\mathcal
{A}^{d}_{n,2}:=\Bigl(B^d_{(\bar{C}+4)\Lambda_{n}^{-\frac{1}{2}}}(x_{n})\backslash
B^d_{(\bar{C}+1)\Lambda_{n}^{-\frac{1}{2}}}(x_{n})\Bigl)\cap\,\Omega\,(\text{or}\,\mathcal{D}).
$$

For a measurable set $E\subset \R^{N+1}_+$, we define a weighted measure
$$m_s(E):=\ds\int_{E}t^{1-2s}dxdt,$$
and
$$-\!\!\!\!\!\!\int_{E}t^{1-2s} f(x,t)dxdt:=\frac{\ds\int_{E}t^{1-2s}f(x,t)dxdt}{m_s(E)}.$$

Firstly, we introduce the following two known results given in \cite{fk} and \cite{cs} respectively.

\begin{lem}\label{lem3.1.1}
(Theorem 1.3,  \cite{fk}) Let $\mathcal{F}$ be an open bounded set in $\R^{N+1}$. Then there exists a constant $C(N,s,\mathcal{F})>0$
such that
\begin{equation}\label{3.1.1.1}
\arraycolsep=1.5pt
\begin{array}{rl}\displaystyle
\Big(\ds\int_{\mathcal{F}}t^{1-2s}|\bar{u}(x,t)|^{\frac{2(N+1)}{N}}dxdt\Big)^{\frac{N}{2(N+1)}}
\leq C\Big(\ds\int_{\mathcal{F}}t^{1-2s}|\nabla\bar{u}(x,t)|^2dxdt\Big)^{\frac{1}{2}}.
\end{array}
\end{equation}
\end{lem}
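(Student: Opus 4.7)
The lemma is cited verbatim as Theorem~1.3 of \cite{fk}, so operationally one simply invokes that reference. My plan for a self-contained proof would pass through the Muckenhoupt $A_{2}$ framework of Fabes--Kenig--Serapioni. The first step is to observe that the weight $w(x,t) = t^{1-2s}$ (or $|t|^{1-2s}$ after even reflection across $\{t=0\}$) belongs to the $A_{2}$ class on $\mathbb{R}^{N+1}$: the Muckenhoupt condition $\sup_{B}\bigl(\tfrac{1}{|B|}\int_{B} w\bigr)\bigl(\tfrac{1}{|B|}\int_{B} w^{-1}\bigr)<\infty$ is a classical calculation for power weights, valid precisely because $|1-2s|<1$ for $s\in(0,1)$. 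Consequently $d\nu := w\,dx\,dt$ is a doubling measure on $\mathbb{R}^{N+1}$.

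\textbf{Key steps.} Next I would apply the weighted Sobolev--Poincaré inequality for $A_{2}$ weights, which yields, for a bounded open set $\mathcal{F}$, an exponent $q>2$ and a constant $C=C(N,s,\mathcal{F})$ such that
\[
\inf_{c\in\mathbb{R}}\Bigl(\int_{\mathcal{F}} |\bar{u}-c|^{q}\,d\nu\Bigr)^{1/q}\leq C\Bigl(\int_{\mathcal{F}} |\nabla \bar{u}|^{2}\,d\nu\Bigr)^{1/2}.
\]
The exponent $q=\tfrac{2(N+1)}{N}$ in the lemma is strictly below the sharp weighted Sobolev exponent $\tfrac{2(N+2-2s)}{N-2s}$ associated with the doubling dimension $Q=N+2-2s$, and hence is accessible by Hölder interpolation from the sharp inequality. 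To obtain the stated form, in which no mean is subtracted, one either allows $\bar{u}$ to be extended by zero outside $\mathcal{F}$ (consistent with the application in the excerpt, where $\bar{u}\in H^1_0(t^{1-2s},\mathcal{D})$ vanishes on the lateral boundary) or absorbs the mean through a standard compactness-Poincaré argument, the constant being allowed to depend on $\mathcal{F}$.

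\textbf{Main obstacle.} The delicate point is the degeneracy of $w$ as $t\to 0^{+}$ when $\mathcal{F}$ meets the hyperplane $\{t=0\}$. This is precisely the difficulty the $A_{2}$ theory is designed to resolve: one has a Riesz-potential representation bounding $|\bar{u}(z)-\bar{u}_{\mathcal{F}}|$ by $I_{1}(\chi_{\mathcal{F}}|\nabla\bar{u}|)(z)$, and the boundedness of $I_{1}$ on weighted $L^{p}$ spaces with $A_{p}$ weights closes the argument via a Hedberg-type pointwise estimate. Since this apparatus is packaged in \cite{fk}, in practice the lemma is established by direct citation, and the only work is to verify that the hypotheses of Theorem~1.3 of \cite{fk} apply to our specific weight $t^{1-2s}$ and bounded open set $\mathcal{F}$, both of which have already been checked above.
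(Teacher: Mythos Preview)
Your proposal is correct and matches the paper's approach exactly: the paper states this lemma with no proof, simply attributing it as Theorem~1.3 of \cite{fk}. Your additional sketch via the $A_2$ Muckenhoupt theory (verifying $|t|^{1-2s}\in A_2$ for $s\in(0,1)$ and invoking the weighted Sobolev--Poincar\'e inequality of Fabes--Kenig--Serapioni) is precisely the content of the cited reference and goes beyond what the paper itself provides.
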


\begin{lem}\label{lem3.1.2}
(Lemma 5.2,  \cite{cs})
For $f\geq0$, assume that $\bar{u}\in H^1_0(t^{1-2s},\mathcal{D})$ satisfies
$$
\left\{%
\begin{array}{ll}
    div(t^{1-2s}\nabla \bar{u})=0,  &\text{in}\;\mathcal{D},\vspace{0.1cm}\\
\mathcal{A}_s(\bar{u})=f, &\text{on}\; \Om\times \{0\},\vspace{0.1cm}\\
\bar{u}=0, &\text{on}\; \partial_L \mathcal{D}.
\end{array}%
\right.
$$
Then, for $\gamma\in(1,\frac{2N+2}{2N+1})$, there exists a constant $C>0$ such that
$$\Big(\ds-\!\!\!\!\!\!\int_{B_r^{N+1}(x)}t^{1-2s}|\bar{u}|^\gamma dxdt\Big)^{\frac{1}{\gamma}}
\leq
C\ds-\!\!\!\!\!\!\int_{B_1^{N+1}(x)}t^{1-2s}|\bar{u}|^\gamma dxdt +C\ds\int_r^1\Big(\frac{1}{\rho^{N-2s}}\ds\int_{B_\rho^{N}(x)}f(y)dy\Big)\frac{d\rho}{\rho}
$$
holds for any $x\in\Om$ and $r\in(0,r_0)$, where $r_0=\text{dist}\,(x,\partial\Om)$.
\end{lem}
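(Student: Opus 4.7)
This lemma is quoted from \cite{cs}; the natural proof is a Moser--De\,Giorgi iteration on nested balls in the Caffarelli--Silvestre extension, combined with a dyadic summation that converts the boundary datum $f$ into a truncated Riesz potential. The extension formulation rewrites the nonlocal condition $\mathcal{A}_s\bar{u}=f$ as a weighted divergence-form equation with $A_2$-weight $t^{1-2s}$, so classical elliptic technology --- Caccioppoli-type energy estimates together with the weighted Sobolev inequality of Lemma~\ref{lem3.1.1} --- is available at every scale.

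\textbf{Energy step and self-improvement.} Fix $x\in\Omega$, $r\in(0,r_0)$, and set $r_k:=2^{-k}$ with $r_K\approx r$. For cutoffs $\eta_k\in C_c^\infty(B^{N+1}_{r_k}(x))$ with $\eta_k\equiv 1$ on $B^{N+1}_{r_{k+1}}(x)$ and $|\nabla\eta_k|\lesssim r_k^{-1}$, test the extension equation against $\eta_k^{2}\,|\bar{u}|^{\gamma-1}\operatorname{sgn}(\bar{u})$ (after standard regularisation). Integration by parts in $\R^{N+1}_+$ sends the weight $t^{1-2s}$ to the boundary where it meets the condition $\mathcal{A}_s\bar{u}=f$, producing
\begin{equation*}
\int_{B^{N+1}_{r_k}(x)} t^{1-2s}\bigl|\nabla\bigl(|\bar{u}|^{\gamma/2}\eta_k\bigr)\bigr|^{2}\,dxdt
\;\lesssim\;
\frac{1}{r_k^{2}}\int_{B^{N+1}_{r_k}(x)} t^{1-2s}|\bar{u}|^{\gamma}\,dxdt
+\int_{B^{N}_{r_k}(x)} f\,|\bar{u}|^{\gamma-1}\,dy.
\end{equation*}
Applying Lemma~\ref{lem3.1.1} to $|\bar{u}|^{\gamma/2}\eta_k$, H\"older's inequality on the boundary integral, and normalising by $m_s(B^{N+1}_{r_k}(x))$, one obtains a recursive estimate of the form
\begin{equation*}
\Bigl(-\!\!\!\!\!\!\int_{B^{N+1}_{r_{k+1}}(x)} t^{1-2s}|\bar{u}|^{\gamma}\Bigr)^{1/\gamma}
\;\le\; C\Bigl(-\!\!\!\!\!\!\int_{B^{N+1}_{r_k}(x)} t^{1-2s}|\bar{u}|^{\gamma}\Bigr)^{1/\gamma}
+\frac{C}{r_k^{N-2s}}\int_{B^{N}_{r_k}(x)} f(y)\,dy.
\end{equation*}

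\textbf{Dyadic summation and main obstacle.} Iterating this recursion from $k=0$ to $k=K$ and recognising $\sum_k r_k^{-(N-2s)}\int_{B^N_{r_k}(x)} f$ as a Riemann sum approximation of $\int_r^1 \rho^{-(N-2s)}\bigl(\int_{B^N_\rho(x)} f\bigr)\,d\rho/\rho$ yields the claimed bound. The principal difficulty --- and the reason behind the range $\gamma\in(1,\tfrac{2N+2}{2N+1})$ --- is that the boundary term must be \emph{subcritical} with respect to the weighted trace embedding: H\"older's inequality combined with the extension-trace inequality \eqref{1.12} controls $\int f\,|\bar{u}|^{\gamma-1}$ without forcing a negative power of the average of $|\bar{u}|^\gamma$ to be absorbed back into the left-hand side precisely when $\gamma<\tfrac{2N+2}{2N+1}$. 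A secondary but genuine issue is keeping the constants in the weighted Poincar\'e and Sobolev inequalities scale-invariant so that the final constant $C$ is truly independent of $r$; this is guaranteed by the Muckenhoupt $A_2$ character of $t^{1-2s}$ but has to be verified carefully in the iteration.
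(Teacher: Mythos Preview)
The paper does not prove this lemma at all: it is stated as Lemma~5.2 of \cite{cs} and quoted without argument, so there is no in-paper proof to compare your sketch against. Your outline supplies a plausible strategy where the paper offers none, but it contains a real gap that you have mislabelled as ``secondary''.

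The recursion you write,
\[
\Bigl(\;-\!\!\!\!\!\!\int_{B^{N+1}_{r_{k+1}}(x)} t^{1-2s}|\bar u|^{\gamma}\Bigr)^{1/\gamma}
\;\le\; C\Bigl(\;-\!\!\!\!\!\!\int_{B^{N+1}_{r_k}(x)} t^{1-2s}|\bar u|^{\gamma}\Bigr)^{1/\gamma}
+\frac{C}{r_k^{N-2s}}\int_{B^{N}_{r_k}(x)} f,
\]
iterates to
\[
\phi(r_K)\le C^{K}\phi(1)+C\sum_{j=0}^{K-1}C^{K-1-j}F(r_j),
\]
and a Caccioppoli step followed by the weighted Sobolev inequality of Lemma~\ref{lem3.1.1} will generically produce a constant $C>1$. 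Since $K\sim\log_2(1/r)$, the factor $C^{K}$ is a negative power of $r$ and the first term blows up rather than reducing to $C\cdot-\!\!\!\!\int_{B_1}t^{1-2s}|\bar u|^{\gamma}$. The scale-invariance of the $A_2$ weight guarantees that $C$ is \emph{independent of the scale}, but not that it equals $1$; that is precisely what would be needed for your telescoping to work.

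The argument in \cite{cs} does not iterate a Caccioppoli--Sobolev step. What makes the homogeneous contribution stay bounded is a genuine mean-value / weak Harnack property for solutions of $\operatorname{div}(t^{1-2s}\nabla\bar v)=0$ with $\mathcal{A}_s\bar v=0$: after even reflection across $\{t=0\}$ one has a solution of a uniformly degenerate $A_2$-elliptic equation on the full ball, and the Fabes--Kenig--Serapioni theory (cf.\ \cite{fk}) gives $\bigl(-\!\!\!\!\int_{B_r}t^{1-2s}|\bar v|^{\gamma}\bigr)^{1/\gamma}\le C\bigl(-\!\!\!\!\int_{B_1}t^{1-2s}|\bar v|^{\gamma}\bigr)^{1/\gamma}$ \emph{directly}, with $C$ independent of $r$. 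The boundary source $f$ is then handled by splitting off, at each dyadic scale, the part of $\bar u$ generated by $f$ on that annulus; estimating this piece against the Green's function for the extension problem is what produces the truncated Riesz potential $\int_r^1\rho^{-(N-2s)}\bigl(\int_{B^N_\rho}f\bigr)\,d\rho/\rho$. The restriction $\gamma<\frac{2N+2}{2N+1}$ enters in the trace/H\"older step when comparing these two pieces, as you correctly identify, but it does not by itself rescue the naive iteration.
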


 Now we come to our main result in this section.

\begin{prop}\label{prop3.1}
Let $w_{n}$ be a weak solution of \eqref{h2.4}. Then there is a constant $C>0$
independent of $n$, such that
$$
\int_{\mathcal {A}^{N}_{n,2}}|w_{n}|^{p}dx\leq
C\Lambda_{n}^{\frac{Np}{2p_{1}}-\frac{N}{2}},
$$
and
$$
\int_{\mathcal {A}^{N+1}_{n,2}}t^{1-2s}|w_{n}|^{p}dxdt\leq
C\Lambda_{n}^{\frac{Np}{2p_{1}}-\frac{N+2-2s}{2}},
$$
where $p_{1}>2^{*}_s$ and $p\geq 2$ are any constants, satisfying
$$
p,p_{1}<\frac{2^*_s\sqrt{\bar{\mu}}}{\sqrt{\bar{\mu}}-\sqrt{\bar{\mu}-\mu}}.
$$
\end{prop}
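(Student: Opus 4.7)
The plan is to combine the integral decomposition furnished by Proposition \ref{prop2.1} with the fact that the safe region contains no concentration point of $u_n$, and to turn this into pointwise control on $\bar w_n$ by running a Moser-type iteration on the extended problem. By Proposition \ref{prop2.1}, for any admissible pair $(p_1,p_2)$ in the open interval $\big(\frac{2^*_s}{2},\frac{2^*_s\sqrt{\bar\mu}}{\sqrt{\bar\mu}-\sqrt{\bar\mu-\mu}}\big)$ with $p_2<2^*_s<p_1$, we can decompose $w_n\leq w_{n,1}+w_{n,2}$ pointwise in $\Om$ with $\|w_{n,1}\|_{p_1}\leq C$ and $\|w_{n,2}\|_{p_2}\leq C\Lambda_n^{N/2^*_s-N/p_2}$. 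Given the exponent $p$ of the statement, I would pick $p_1\geq p$ inside the admissible range. Since $|\mathcal{A}^N_{n,2}|\leq C\Lambda_n^{-N/2}$, H\"older's inequality immediately gives
$$\int_{\mathcal{A}^N_{n,2}}|w_{n,1}|^p\,dx\leq\|w_{n,1}\|_{p_1}^p\,|\mathcal{A}^N_{n,2}|^{1-p/p_1}\leq C\Lambda_n^{\frac{Np}{2p_1}-\frac{N}{2}},$$
which is the right scaling for the tail piece.

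The core of the argument is to produce the same estimate for $w_{n,2}$, whose $L^{p_2}$ norm blows up. For this I would exploit that the safe region lies at distance $\geq c\Lambda_n^{-1/2}$ from every concentration point of $u_n$, so on it the extension $\bar w_n$ is a (weak) subsolution of \eqref{h2.4} whose right-hand side is uniformly controlled. Concretely, I would test \eqref{h2.4} against $\phi=\bar w_n^{2k-1}\eta^2$ for a cutoff $\eta$ equal to $1$ on $\mathcal{A}^{N+1}_{n,2}$, supported in $\mathcal{A}^{N+1}_{n,1}$ and satisfying $|\nabla\eta|\leq C\Lambda_n^{1/2}$; combined with the weighted Sobolev inequality of Lemma \ref{lem3.1.1}, this yields a reverse H\"older inequality which, iterated finitely many times starting from the $H^1(t^{1-2s})$ bound on $\bar w_n$, produces $\|\bar w_n\|_{L^\infty(\mathcal{A}^{N+1}_{n,2})}\leq C$. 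Alternatively, one can apply Lemma \ref{lem3.1.2} at each $(x,0)\in\mathcal{A}^N_{n,2}$ with $r\sim\Lambda_n^{-1/2}$: the integral-average term is controlled by Proposition \ref{prop2.1}, and the remaining tail integral of the right-hand side of \eqref{h2.4} is handled through the decomposition above. Either route yields $|\bar w_n|\leq C$ on $\mathcal{A}^{N+1}_{n,2}$, and then multiplying by $|\mathcal{A}^N_{n,2}|\leq C\Lambda_n^{-N/2}$ and $m_s(\mathcal{A}^{N+1}_{n,2})\leq C\Lambda_n^{-(N+2-2s)/2}$, and using $\Lambda_n^{Np/(2p_1)}\geq 1$, delivers both inequalities of the proposition.

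The main obstacle will be the appearance of the Hardy potential at every iteration step. Each round produces a term of the form $\mu\int|w_n|^{2k}|x|^{-2s}\eta^2$ which has to be absorbed by the gradient part; absorption is possible precisely when the running integrability exponents stay strictly below $\frac{2^*_s\sqrt{\bar\mu}}{\sqrt{\bar\mu}-\sqrt{\bar\mu-\mu}}$, via the sharp fractional Hardy inequality \eqref{1.2}. This is the origin of the restriction imposed on $p$ and $p_1$ in the statement. A secondary technical point is that $x_n$ may coincide with the singular point $\{x=0\}$ of the Hardy potential, so all cutoff and embedding estimates must be applied with constants uniform in the position of $x_n$; the inner radius $\Lambda_n^{-1/2}$ used in the definition of $\mathcal{A}^{N}_{n,2}$ and $\mathcal{A}^{N+1}_{n,2}$ is exactly what ensures this uniformity.
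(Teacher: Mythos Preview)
Your decomposition argument for $w_{n,1}$ via H\"older is correct and gives exactly the right power, but the claim that Moser iteration yields $|\bar w_n|\le C$ on the safe region is false, and this is where the proposal breaks down. When $x_n=0$ (the case $j\le m$ in the decomposition), the weak limit $u_0$ itself is singular at the origin: by Proposition~\ref{propbb3} one only has $|u_0(x)|\le C|x|^{-\frac{N-2s}{2}+\beta}$ with $\beta<\frac{N-2s}{2}$, so on the annulus $|x|\sim\Lambda_n^{-1/2}$ the function $u_0$ (hence $w_n$) is of order $\Lambda_n^{\frac{N-2s}{4}-\frac{\beta}{2}}\to\infty$. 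No amount of iteration can produce a uniform pointwise bound. More concretely, running Moser on balls of radius $r\sim\Lambda_n^{-1/2}$ produces constants that scale like $r^{-(N+2-2s)/2}\sim\Lambda_n^{(N+2-2s)/4}$, and the initial $H^1(t^{1-2s})$ control is not nearly small enough locally to compensate.

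The paper's proof does not aim for an $L^\infty$ bound. It first proves Lemma~\ref{lem3.2}, which uses Lemma~\ref{lem3.1.2} together with the \emph{full} $\|\cdot\|_{*,p_1,p_2,\Lambda_n}$ bound of Proposition~\ref{prop2.1} (including the weighted Hardy part of the norm) to show that the weighted $L^\gamma$ average of $\bar w_n$ on balls of radius $\sim\Lambda_n^{-1/2}$ is $O(\Lambda_n^{N/(2p_1)})$, not $O(1)$. It then rescales by $\Lambda_n^{-1/2}$ so that the balls have unit size, verifies that the rescaled coefficient $\Lambda_n^{-s}(|v_n|^{2^*_s-2}+a)$ is small in $L^{N/2s}$ on the unit ball (this is where absence of concentration is used), and applies the Moser-type Lemma~\ref{lema.4} to pass from $L^\gamma$ to $L^p$ with a constant independent of $n$. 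The outcome is an $L^p$ bound of size $\Lambda_n^{N/(2p_1)}$ on each rescaled unit ball, which after undoing the scaling and covering the annulus gives the stated $\Lambda_n^{\frac{Np}{2p_1}-\frac{N}{2}}$ and $\Lambda_n^{\frac{Np}{2p_1}-\frac{N+2-2s}{2}}$. The factor $\Lambda_n^{Np/(2p_1)}$ is therefore not slack you can discard; it is genuinely there.
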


In order to prove Proposition \ref{prop3.1}, we need the following
lemma.

\begin{lem}\label{lem3.2}
Suppose that $w_{n}$ satisfies \eqref{h2.4} with
$\epsilon=\epsilon_{n}\rightarrow 0$. Then there is a constant $C>0$
independent of $n,$ such that
$$
\sup\limits_{n}-\!\!\!\!\!\!\int_{B^{N+1}_{r}(y)}t^{1-2s}|\bar{w}_n|^\gamma dxdt\leq
C\Lambda_{n}^{\frac{N\gamma}{2p_{1}}} ,\,\,\,\forall y\in \R^{N},
$$
for all
$r\in\big[\bar{C}\Lambda_{n}^{-\frac{1}{2}},(\bar{C}+5)\Lambda_{n}^{-\frac{1}{2}}\big],$
where $\gamma\in(1,\frac{2N+2}{2N+1})$ and $p_{1}>2^{*}_s$ is any constant satisfying $p_{1}<\frac{2^*_s\sqrt{\bar{\mu}}}{\sqrt{\bar{\mu}}-\sqrt{\bar{\mu}-\mu}}.$
\end{lem}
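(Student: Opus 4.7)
The plan is to apply Lemma \ref{lem3.1.2} to the $s$-harmonic extension $\bar w_n$ of $w_n$. By \eqref{h2.4}, $\bar w_n$ is (a weak subsolution of) the extension problem with nonnegative boundary source bounded by
$$f_n(x):=\frac{\mu w_n(x)}{|x|^{2s}}+2w_n(x)^{2^*_s-1}+A.$$
Lemma \ref{lem3.1.2} then gives, for every $y\in\R^N$ and every $r\in[\bar{C}\Lambda_n^{-1/2},(\bar{C}+5)\Lambda_n^{-1/2}]$,
$$\Big(-\!\!\!\!\!\!\int_{B_r^{N+1}(y)}t^{1-2s}|\bar{w}_n|^\gamma\,dxdt\Big)^{1/\gamma}
\le C\Big(-\!\!\!\!\!\!\int_{B_1^{N+1}(y)}t^{1-2s}|\bar{w}_n|^\gamma\,dxdt\Big)^{1/\gamma}
+C\int_r^1\frac{1}{\rho^{N-2s}}\int_{B_\rho^N(y)}f_n\,dx\,\frac{d\rho}{\rho}.$$

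For the first term on the right, since $\gamma<\frac{2N+2}{2N+1}<\frac{2(N+1)}{N}$, a weighted Hölder inequality combined with Lemma \ref{lem3.1.1} and the uniform bound $\|\bar w_n\|_{H^1_0(t^{1-2s},\mathcal D)}=\|w_n\|\le C$ yields a constant. For the second term, one invokes Proposition \ref{prop2.1} to decompose $w_n\le u_{n,1}+u_{n,2}$ with $\|u_{n,i}\|_{*,p_i}\le\alpha_i$, where $\alpha_1=C$ and $\alpha_2=C\Lambda_n^{N/2^*_s-N/p_2}$, for carefully chosen exponents $p_1>2^*_s$ and $p_2<2^*_s$ (both in the admissible range of Proposition \ref{prop2.1}). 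The three pieces of $f_n$ are estimated separately: for $w_n^{2^*_s-1}$, a plain Hölder inequality gives $\int_{B_\rho^N(y)}u_{n,i}^{2^*_s-1}\le C\rho^{N(1-(2^*_s-1)/p_i)}\alpha_i^{2^*_s-1}$ (taking $p_2$ close enough to $2^*_s$ so that $p_2\ge 2^*_s-1$, which is possible since $N>2s$); the constant $A$ gives an $O(1)$ contribution upon integration in $\rho$; and for the Hardy piece $w_n/|x|^{2s}$, one applies a weighted Hölder inequality with conjugate exponents $2p_i/2^*_s$ and $(2p_i/2^*_s)'$, so that
$$\int_{B_\rho^N(y)}\frac{u_{n,i}}{|x|^{2s}}\,dx
\le\Big(\int u_{n,i}^{2p_i/2^*_s}|x|^{-2s}\,dx\Big)^{2^*_s/(2p_i)}\Big(\int_{B_\rho^N(y)}|x|^{-2s}\,dx\Big)^{(2p_i-2^*_s)/(2p_i)}
\le C\alpha_i\,\rho^{(N-2s)(2p_i-2^*_s)/(2p_i)},$$
where the first factor is exactly the Hardy-Sobolev contribution encoded in $\|\cdot\|_{*,p_i}$.

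After performing the $d\rho/\rho$ integration and substituting $r\sim\Lambda_n^{-1/2}$, the critical identity $(N-2s)\cdot 2^*_s=2N$ converts the Hardy piece from $u_{n,1}$ into exactly $\Lambda_n^{N/(2p_1)}$, and the bound $p_1>2^*_s$ is what controls the critical piece from $u_{n,1}$ by the same power. For the $u_{n,2}$ contributions, the gain $\Lambda_n^{N/2^*_s-N/p_2}<1$ coming from the norm compensates the blow-up from the $\rho$-integral; the remaining $\Lambda_n$-exponent stays below $N/(2p_1)$ provided $p_2$ is taken to satisfy $1/p_2\ge 2/2^*_s-1/p_1$, an interval that is non-empty and compatible with $p_2\in(2^*_s/2,2^*_s)$. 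The main obstacle is the Hardy term: because $y$ may sit near $0$, the weight $|x|^{-2s}$ cannot be frozen, and the Hölder exponent must be matched exactly to the non-standard exponent $2p/2^*_s$ appearing in the Hardy-Sobolev part of $\|\cdot\|_{*,p}$; only this precise matching, together with the identity $(N-2s)\cdot 2^*_s=2N$, produces the sharp target rate $\Lambda_n^{N\gamma/(2p_1)}$ after raising to the $\gamma$-th power.
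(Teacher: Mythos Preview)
Your overall strategy matches the paper's exactly: apply Lemma~\ref{lem3.1.2}, bound the unit-ball average by H\"older plus Lemma~\ref{lem3.1.1} and the uniform $H^1_0(t^{1-2s},\mathcal D)$ bound, and then use Proposition~\ref{prop2.1} to split $w_n\le u_{n,1}+u_{n,2}$ and estimate the Hardy, critical, and constant pieces of $f_n$ separately via H\"older. Your H\"older computations for the Hardy piece (matching the exponent $2p_i/2^*_s$ to the weighted part of $\|\cdot\|_{*,p_i}$) and for the $u_{n,1}$ critical piece are exactly those in the paper.

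There is, however, a genuine compatibility issue with using a \emph{single} choice of $p_2$ for both the Hardy and the critical $u_{n,2}$ contributions. For the critical piece you need $p_2\ge 2^*_s-1$ so that the H\"older step $\int_{B_\rho}u_{n,2}^{2^*_s-1}\le \|u_{n,2}\|_{p_2}^{2^*_s-1}|B_\rho|^{1-(2^*_s-1)/p_2}$ is valid, while for the Hardy piece you need $1/p_2\ge 2/2^*_s-1/p_1$, i.e.\ $p_2\le 2^*_s p_1/(2p_1-2^*_s)$. A short computation shows these two constraints admit a common $p_2$ only when $p_1\le \frac{N(N+2s)}{2s(N-2s)}$. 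But the lemma asserts the estimate for \emph{every} $p_1$ in $\bigl(2^*_s,\frac{2^*_s\sqrt{\bar\mu}}{\sqrt{\bar\mu}-\sqrt{\bar\mu-\mu}}\bigr)$, and for small $\mu$ this upper endpoint can exceed $\frac{N(N+2s)}{2s(N-2s)}$; so your argument as written does not cover the full range.

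The paper circumvents this by invoking Proposition~\ref{prop2.1} \emph{twice}, with two different second exponents: $p_2=\frac{2^*_s}{2}+\theta$ (close to the lower endpoint) for the Hardy term, which yields an exponent $N/2^*_s-N/(2p_2)=\theta_1>0$ that can be made smaller than $N/(2p_1)$ for any fixed $p_1$; and $\bar p_2=2^*_s-1$ for the critical term, which produces an $O(1)$ contribution. Once you allow these two separate decompositions, every step of your outline goes through for all admissible $p_1$, and the proof is complete.
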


\begin{proof}
Firstly, using H\"{o}lder inequality and \eqref{3.1.1.1}, we find
\begin{eqnarray*}
&&\ds\int_{B^{N+1}_1(y)}t^{1-2s}|\bar{w}_n|^\gamma dxdt\\
&\leq&\Big(\int_{B^{N+1}_1(y)}t^{1-2s}|\bar{w}_n|^\frac{2(N+1)}{N} dxdt\Big)^{\frac{N\gamma}{2(N+1)}}
\Big(\int_{B^{N+1}_1(y)}t^{1-2s}dxdt\Big)^{1-\frac{N\gamma}{2(N+1)}}\\
&\leq& C\Big(\int_{B^{N+1}_1(y)}t^{1-2s}|\nabla \bar{w}_n(x,t)|^2 dxdt\Big)^{\frac{1}{2}}\leq C.
\end{eqnarray*}
So it follows from \eqref{h2.4} and Lemma \ref{lem3.1.2} that
\begin{equation}\label{3.4.1}
\begin{split}
&\Big(-\!\!\!\!\!\!\int_{B^{N+1}_{r}(y)}t^{1-2s}|\bar{w}_n|^\gamma dxdt\Big)^{\frac{1}{\gamma}}\\
&\leq
C+C\ds\int_r^1\frac{1}{\rho^{N-2s}}\ds\int_{B_\rho^{N}(y)}\Big(\frac{\mu}{|x|^{2s}}w_{n}+2w_n^{2^*_s-1}+A\Big)dx\frac{d\rho}{\rho}\\
&\leq C+C\ds\int_r^1\frac{1}{\rho^{N-2s+1}}\ds\int_{B_\rho^{N}(y)}\frac{\mu w_{n}}{|x|^{2s}}dxd\rho
+C\ds\int_r^1\frac{1}{\rho^{N-2s+1}}\ds\int_{B_\rho^{N}(y)}w_n^{2^*_s-1}dxd\rho.
\end{split}
\end{equation}
By Proposition \ref{prop2.1}, we know that
$\|w_{n}\|_{*,p_{1},p_{2},\Lambda_{n}}\leq C$ for any $p_{1},p_{2}\in \big(\frac{2^{*}_s}{2},\frac{2^*_s\sqrt{\bar{\mu}}}{\sqrt{\bar{\mu}}-\sqrt{\bar{\mu}-\mu}}\big)$
satisfying $p_{2}<2^{*}_s<p_{1}$.

Let $p_1$ be a constant satisfying $2^*_s<p_1<\frac{2^*_s\sqrt{\bar{\mu}}}{\sqrt{\bar{\mu}}-\sqrt{\bar{\mu}-\mu}}$
and $p_2=\frac{2^*_s}{2}+\theta$, where $\theta>0$ is a small constant. Then we can choose $v_{1,n}$ and $v_{2,n}$
such that $w_n\leq v_{1,n}+v_{2,n}$,
$\|v_{1,n}\|_{*,p_1}\leq C$
and
$\|v_{2,n}\|_{*,p_2}\leq C\Lambda_{n}^{\frac{N}{2^{*}_s}-\frac{N}{p_{2}}}.$ So,
\begin{equation*}\label{3.2.1}
\begin{array}{ll}
&\ds\int_r^1\frac{1}{\rho^{N-2s+1}}\ds\int_{B_\rho^{N}(y)}\frac{\mu v_{1,n}}{|x|^{2s}}dxd\rho\vspace{0.2cm}\\
&\leq \ds\int_r^1\frac{1}{\rho^{N-2s+1}}\Big(\ds\int_{B_\rho^{N}(y)}\mu\frac{|v_{1,n}|^{\frac{2p_1}{2^*_s}}}{|x|^{2s}}dx\Big)^{\frac{2^*_s}{2p_1}}
\Big(\ds\int_{B_\rho^{N}(y)}\frac{\mu}{|x|^{2s}}dx\Big)^{1-\frac{2^*_s}{2p_1}}\vspace{0.2cm}\\
&\leq C\ds\int_r^1\rho^{2s-N-1+(N-2s)(1-\frac{2^{*}_s}{2p_{1}})}d\rho
\leq  C r^{-\frac{(N-2s)2^{*}_s}{2p_{1}}}\leq C\Lambda_{n}^{\frac{(N-2s)2^{*}_s}{4p_{1}}}=C\Lambda_{n}^{\frac{N}{2p_1}},
\end{array}
\end{equation*}
and
\begin{equation*}\label{3.2.2}
\begin{array}{ll}
&\ds\int_r^1\frac{1}{\rho^{N-2s+1}}\ds\int_{B_\rho^{N}(y)}\frac{\mu v_{2,n}}{|x|^{2s}}dxd\rho\vspace{0.2cm}\\
&\leq \ds\int_r^1\frac{1}{\rho^{N-2s+1}}\Big(\ds\int_{B_\rho^{N}(y)}\frac{\mu|v_{2,n}|^{\frac{2p_2}{2^*_s}}}{|x|^{2s}}dx\Big)^{\frac{2^*_s}{2p_2}}
\Big(\ds\int_{B_\rho^{N}(y)}\frac{\mu}{|x|^{2s}}dx\Big)^{1-\frac{2^*_s}{2p_2}}\vspace{0.2cm}\\
&\leq C\Lambda_{n}^{\frac{N}{2^*_s}-\frac{N}{p_2}}\ds\int_r^1\rho^{2s-N-1+(N-2s)(1-\frac{2^{*}_s}{2p_{2}})}d\rho
\leq  C \Lambda_{n}^{\frac{N}{2^*_s}-\frac{N}{2p_2}}=C\Lambda_{n}^{\theta_1},
\end{array}
\end{equation*}
where $\theta_1>0$ is a small constant if we choose $\theta>0$ small enough.

Thus, we obtain that
\begin{equation}\label{3.2.3}
\begin{array}{ll}
&\ds\int_r^1\frac{1}{\rho^{N-2s+1}}\ds\int_{B_\rho^{N}(y)}\frac{\mu w_{n}}{|x|^{2s}}dxd\rho\vspace{0.2cm}\\
&\leq\ds\int_r^1\frac{1}{\rho^{N-2s+1}}\ds\int_{B_\rho^{N}(y)}\frac{\mu v_{1,n}}{|x|^{2s}}dxd\rho+
\ds\int_r^1\frac{1}{\rho^{N-2s+1}}\ds\int_{B_\rho^{N}(y)}\frac{\mu v_{2,n}}{|x|^{2s}}dxd\rho\vspace{0.2cm}\\
&\leq C\Lambda_{n}^{\frac{N}{2p_{1}}}+C\Lambda_{n}^{\theta_{1}}
\leq C\Lambda_{n}^{\frac{N}{2p_{1}}}.
\end{array}
\end{equation}

Let $\bar{p}_{2}=2^{*}_s-1=\frac{N+2s}{N-2s}$ and let $p_{1}>2^{*}_s$
with $p_{1}<\frac{2^*_s\sqrt{\bar{\mu}}}{\sqrt{\bar{\mu}}-\sqrt{\bar{\mu}-\mu}}$. Then we can
choose $\bar{v}_{1,n}$ and $\bar{v}_{2,n}$ such that $|w_{n}|\leq \bar{v}_{1,n}+\bar{v}_{2,n}$
and $||\bar{v}_{1,n}||_{*,p_{1}}\leq C$ and $||\bar{v}_{2,n}||_{*,\bar{p}_{2}}\leq C\Lambda_{n}^{\frac{N}{2^{*}_s}-\frac{N}{\bar{p}_{2}}}.$
Since $p_{1}>2^{*}_s$, we know that $\frac{N(N+2s)}{2p_{1}(N-2s)}-s<\frac{N}{2p_{1}}$. Therefore,
\begin{eqnarray*}
\ds\int_r^1\frac{1}{\rho^{N-2s+1}}\ds\int_{B_\rho^{N}(y)}\bar{v}_{1,n}^{2^*_s-1}dxd\rho
&\leq&\ds\int_r^1\frac{1}{\rho^{N-2s+1}}\Big(\ds\int_{B_\rho^{N}(y)}|\bar{v}_{1,n}|^{p_1}dx\Big)^{\frac{N+2s}{(N-2s)p_1}}
\rho^{N(1-\frac{N+2s}{p_1(N-2s)})}d\rho\\
&\leq&C\int_{r}^{1}\rho^{2s-1-\frac{N(N+2s)}{p_1(N-2s)})}d\rho
\leq \Lambda_{n}^{\frac{N(N+2s)}{2p_1(N-2s)}-s}
\leq C\Lambda_{n}^{\frac{N}{2p_{1}}},
\end{eqnarray*}
and
\begin{eqnarray*}
\ds\int_r^1\frac{1}{\rho^{N-2s+1}}\ds\int_{B_\rho^{N}(y)}\bar{v}_{2,n}^{2^*_s-1}dxd\rho
&\leq&\int_r^1\frac{1}{\rho^{N-2s+1}}
(\Lambda_{n}^{\frac{N}{2^{*}_s}-\frac{N}{\bar{p_2}}})^{\frac{N+2s}{N-2s}}d\rho\\
&\leq&C\Lambda_{n}^{\frac{2s-N}{2}}
\int_r^1\frac{1}{\rho^{N-2s+1}}d\rho
 \leq C.
\end{eqnarray*}

Hence, we get
\begin{equation}\label{together}
\begin{array}{ll}
&\ds\int_r^1\frac{1}{\rho^{N-2s+1}}\ds\int_{B_\rho^{N}(y)}w_n^{2^*_s-1}dxd\rho\vspace{0.2cm}\\
&\leq
C\ds\int_r^1\frac{1}{\rho^{N-2s+1}}\ds\int_{B_\rho^{N}(y)}\bar{v}_{1,n}^{2^*_s-1}dxd\rho+
C\ds\int_r^1\frac{1}{\rho^{N-2s+1}}\ds\int_{B_\rho^{N}(y)}\bar{v}_{2,n}^{2^*_s-1}dxd\rho\vspace{0.2cm}\\
&\leq C\Lambda_{n}^{\frac{N}{2p_{1}}}.
\end{array}
\end{equation}

 From \eqref{together}, \eqref{3.4.1} and \eqref{3.2.3}, we have
$$
\Big(-\!\!\!\!\!\!\int_{B^{N+1}_{r}(y)}t^{1-2s}|\bar{w}_n|^\gamma dxdt\Big)^{\frac{1}{\gamma}}
\leq
C\Lambda_{n}^{\frac{N}{2p_{1}}}
$$
for any
$r\in\big[\bar{C}\Lambda_{n}^{-\frac{1}{2}},(\bar{C}+5)\Lambda_{n}^{-\frac{1}{2}}\big].$
\end{proof}

\begin{proof}[\textbf{Proof of Proposition \ref{prop3.1}}]

It follows from Lemma \ref{lem3.2} that for any $y\in \mathcal
{A}^{N}_{n,2}$ and $r\in\big[\bar{C}\Lambda_{n}^{-\frac{1}{2}},(\bar{C}+5)\Lambda_{n}^{-\frac{1}{2}}\big]$, we get that
if $\gamma\in(1,\frac{2N+2}{2N+1})$,
\begin{equation}\label{3.3}
\begin{array}{ll}
\ds\int_{B^{N+1}_{r}(y)}t^{1-2s}|\bar{w}_n|^\gamma dxdt
\leq C\Lambda_{n}^{\frac{N\gamma}{2p_{1}}}\int_{B^{N+1}_{r}(y)}t^{1-2s}dxdt
\leq C\Lambda_{n}^{\frac{N\gamma}{2p_{1}}}\Lambda_{n}^{-\frac{1}{2}(N+2-2s)}.
\end{array}
\end{equation}


Let $\bar{v}_{n}(z)=\bar{w}_{n}\big(\Lambda_{n}^{-\frac{1}{2}}z\big),z\in\mathcal{D}_{n}$,
where $\mathcal{D}_{n}=\big\{z:\Lambda_{n}^{-\frac{1}{2}}z\in\mathcal{D}\big\}.$

Then $\bar{v}_{n}$ satisfies
$$
\left\{%
\begin{array}{ll}
    div(t^{1-2s}\bar{v}_n)=0,  & \hbox{$\text{in}~ \mathcal{D}_n$},\vspace{0.1cm} \\
   \mathcal{A}_s \bar{v}_{n}(x,0)\leq\ds\frac{\mu v_{n}}{|x|^{2s}}+ \Lambda_{n}^{-s}\big(|v_{n}|^{2^{*}_s-2}+a\big)v_{n},
    & \hbox{$\text{on}~ \Omega_{n}$},\\
\end{array}%
\right.
$$
where $\Omega_{n}=\big\{x:\Lambda_{n}^{-\frac{1}{2}}x\in\Om\big\}$.

Let $\bar{x}=\Lambda_{n}^{\frac{1}{2}}y.$ Since
$B_{\Lambda_{n}^{-\frac{1}{2}}}(y),y\in \mathcal
{A}^{N}_{n,2}$ does not contain any concentration point of $u_{n}$, we
can deduce that
\begin{eqnarray*}
\int_{B_{1}^N(\bar{x})}|\Lambda_{n}^{-s}(|v_{n}|^{2^{*}_s-2}+a)|^{\frac{N}{2s}}dx
\leq
C\int_{B^N_{\Lambda_{n}^{-\frac{1}{2}}}(y)}|u_{n}|^{2^{*}_s}dx+C\Lambda_{n}^{-\frac{N}{2}}
\rightarrow& 0,
\end{eqnarray*}
as $n\rightarrow\infty$.
Thus by Lemma \ref{lema.4} and \eqref{3.3}, we obtain
\begin{eqnarray*}
||v_{n}||_{L^p(B^N_{\frac{1}{2}}(\bar{x}))} \leq
C\Big(\int_{B^{N+1}_{1}(\bar{x})}t^{1-2s}|\bar{v}_{n}|^{\ga}dxdt\Big)^{\frac{1}{\ga}}
\leq C\Big(\Lambda_{n}^{\frac{N+2-2s}{2}}
\int_{B^{N+1}_{\Lambda_{n}^{-\frac{1}{2}}}(y)}t^{1-2s}|\bar{w}_{n}|^{\ga}dxdt\Big)^{\frac{1}{\ga}}
\leq C\Lambda_{n}^{\frac{N}{2p_{1}}},
\end{eqnarray*}
and
$$ \Big(\int_{B^{N+1}_{\frac{1}{2}}(\bar{x})}t^{1-2s}|\bar{v}_{n}|^{p}dxdt\Big)^{\frac{1}{p}}\leq
C\Big(\int_{B^{N+1}_{1}(\bar{x})}t^{1-2s}|\bar{v}_{n}|^{\ga}dxdt\Big)^{\frac{1}{\ga}}
\leq C\Lambda_{n}^{\frac{N}{2p_{1}}},
$$
for any $p>\max\{2^{*}_s,2^{\sharp}\}$ with $p<\min\{\frac{2^*_s\sqrt{\bar{\mu}}}{\sqrt{\bar{\mu}}-\sqrt{\bar{\mu}-\mu}},
\frac{2^{\sharp}\sqrt{\bar{\mu}}}{\sqrt{\bar{\mu}}-\sqrt{\bar{\mu}-\mu}}\}$ and $2^{\sharp}=\frac{2(N+1)}{N}$.

As a result,
$$
\Lambda_{n}^{\frac{N}{2}}\int_{B^N_{\frac{1}{2}\Lambda_{n}^{-\frac{1}{2}}}(y)}|w_{n}|^{p}dx\leq
C\Lambda_{n}^{\frac{pN}{2p_{1}}},\,\,\forall y\in\mathcal {A}^{N}_{n,2},
$$
and
 $$
 \Lambda_{n}^{\frac{N+2-2s}{2}}\int_{B^{N+1}_{\frac{1}{2}\Lambda_{n}^{-\frac{1}{2}}}(y)}t^{1-2s}|\bar{w}_{n}|^{p}dxdt\leq
C\Lambda_{n}^{\frac{pN}{2p_{1}}},\,\,\forall y\in\mathcal {A}^{N}_{n,2}.
 $$

 Hence, for any $p>\max\{2^{*}_s,2^{\sharp}\}$ with $p<\min\{\frac{2^*_s\sqrt{\bar{\mu}}}{\sqrt{\bar{\mu}}-\sqrt{\bar{\mu}-\mu}},
\frac{2^{\sharp}\sqrt{\bar{\mu}}}{\sqrt{\bar{\mu}}-\sqrt{\bar{\mu}-\mu}}\}$,
 there holds
$$
\int_{\mathcal {A}^{N}_{n,2}}|w_{n}|^{p} dx\leq
C\Lambda_{n}^{\frac{Np}{2p_{1}}-\frac{N}{2}},
$$
and
$$
\int_{\mathcal {A}^{N+1}_{n,2}}t^{1-2s}|\bar{w}_{n}|^{p}dxdt
C\Lambda_{n}^{\frac{Np}{2p_{1}}-\frac{N+2-2s}{2}}.
$$

On the other hand, for any $2\leq p\leq \max\{2^{*}_s,2^{\sharp}\},$ take $\bar{p}>\max\{2^{*}_s,2^{\sharp}\}$ and $\bar{p}<\min\{\frac{2^*_s\sqrt{\bar{\mu}}}{\sqrt{\bar{\mu}}-\sqrt{\bar{\mu}-\mu}},
\frac{2^{\sharp}\sqrt{\bar{\mu}}}{\sqrt{\bar{\mu}}-\sqrt{\bar{\mu}-\mu}}\}$. Then
\begin{eqnarray*}
\int_{\mathcal {A}^{N}_{n,2}}|w_{n}|^{p}dx
\leq C\Bigl(\int_{\mathcal {A}^{N}_{n,2}}|w_{n}|^{\bar{p}}dx\Bigl)^{\frac{p}{\bar{p}}}\Lambda_{n}^{-\frac{N}{2}(1-\frac{p}{\bar{p}})}
\leq C\Lambda_{n}^{\frac{Np}{2p_{1}}-\frac{N}{2}},
\end{eqnarray*}
and
\begin{eqnarray*}
\int_{\mathcal {A}^{N+1}_{n,2}}t^{1-2s}|\bar{w}_{n}|^{p}dxdt
\leq C\Bigl(\int_{\mathcal {A}^{N+1}_{n,2}}t^{1-2s}|\bar{w}_{n}|^{\bar{p}}dxdt\Bigl)^{\frac{p}{\bar{p}}}\Lambda_{n}^{-\frac{N+2-2s}{2}(1-\frac{p}{\bar{p}})}
\leq C\Lambda_{n}^{\frac{Np}{2p_{1}}-\frac{N+2-2s}{2}},
\end{eqnarray*}
Hence, for any $p\geq2$,
$$
\Big(\int_{\mathcal {A}^{N}_{n,2}}|w_{n}|^{p}dx\Big)^{\frac{1}{p}}\leq C\Lambda_{n}^{\frac{N}{2p_{1}}-\frac{N}{2p}},
$$
and
\begin{eqnarray*}
\int_{\mathcal {A}^{N+1}_{n,2}}t^{1-2s}|\bar{w}_{n}|^{p}dxdt
\leq C\Lambda_{n}^{\frac{Np}{2p_{1}}-\frac{1}{2}(N+2-2s)}.
\end{eqnarray*}
\end{proof}

Let $\mathcal
{A}^{d}_{n,3}=\Big(B^d_{(\bar{C}+3)\Lambda_{n}^{-\frac{1}{2}}}(x_{n})\backslash
B^d_{(\bar{C}+2)\Lambda_{n}^{-\frac{1}{2}}}(x_{n})\Big)\cap\Omega\,(\text{or}\,\mathcal{D}),\,\,d=N, N+1.$

\begin{prop}\label{prop3.3}
We have
\begin{equation}\label{3.3.1}
\int_{\mathcal{A}^{N+1}_{n,3}}t^{1-2s}|\nabla\bar{ u}_{n}(x,t)|^{2}dxdt,
\int_{\mathcal{A}^{N}_{n,3}}\frac{\mu| u_{n}|^{2}}{|x|^{2s}}dx
\leq C\int_{\mathcal{A}^{N}_{n,2}}\big(|u_n|^{2^*_s}+1\big)dx
+C\Lambda_{n}\int_{\mathcal{A}^{N+1}_{n,2}}t^{1-2s}|\bar{ u}_{n}(x,t)|^{2}dxdt.
\end{equation}

In particular,
\begin{equation}\label{3.3.2}
\int_{\mathcal
{A}^{N+1}_{n,3}}t^{1-2s}|\nabla\bar{ u}_{n}(x,t)|^{2}dxdt,
\int_{\mathcal
{A}^{N}_{n,3}}\frac{\mu| u_{n}|^{2}}{|x|^{2s}}dx\leq
C\Lambda_{n}^{\frac{2s-N}{2}+\frac{N}{p_{1}}}.
\end{equation}
\end{prop}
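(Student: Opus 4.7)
The strategy is to derive \eqref{3.3.1} by a Caccioppoli-type energy estimate for the $s$-harmonic extension $\bar{u}_{n}$ on the annular safe region, and then convert \eqref{3.3.1} into \eqref{3.3.2} by plugging in the integral decay from Proposition \ref{prop3.1}.

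First I would fix a smooth radial cutoff $\eta(x,t)$ with $\eta\equiv 1$ on $\mathcal{A}^{N+1}_{n,3}$, $\operatorname{supp}\eta\subset\mathcal{A}^{N+1}_{n,2}$, and $|\nabla\eta|\leq C\Lambda_{n}^{1/2}$ (possible because the two annuli differ by a radial shell of width $\sim\Lambda_{n}^{-1/2}$). For $n$ large this support is disjoint from $\partial_{L}\mathcal{D}$, so $\eta^{2}\bar{u}_{n}\in H^{1}_{0}(t^{1-2s},\mathcal{D})$ is an admissible test function in the weak formulation of \eqref{1.17}. Testing, expanding $\nabla(\eta^{2}\bar{u}_{n})=\eta^{2}\nabla\bar{u}_{n}+2\eta\bar{u}_{n}\nabla\eta$, and absorbing the cross term via $2|\eta\bar{u}_{n}\nabla\eta\cdot\nabla\bar{u}_{n}|\leq\tfrac{1}{2}\eta^{2}|\nabla\bar{u}_{n}|^{2}+2|\nabla\eta|^{2}\bar{u}_{n}^{2}$, I expect the Caccioppoli bound
\[
\int_{\mathcal{D}}t^{1-2s}\eta^{2}|\nabla\bar{u}_{n}|^{2}\,dxdt\leq C\int_{\mathcal{A}^{N}_{n,2}}\Big(\frac{\mu u_{n}^{2}}{|x|^{2s}}+|u_{n}|^{2^{*}_{s}-\epsilon_{n}}+au_{n}^{2}\Big)dx+C\Lambda_{n}\int_{\mathcal{A}^{N+1}_{n,2}}t^{1-2s}\bar{u}_{n}^{2}\,dxdt,
\]
and the elementary inequalities $|u_{n}|^{2^{*}_{s}-\epsilon_{n}}+au_{n}^{2}\leq C(|u_{n}|^{2^{*}_{s}}+1)$ simplify the two non-Hardy terms.

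The main subtlety is to control the Hardy terms: the one on $\mathcal{A}^{N}_{n,2}$ just produced by Caccioppoli, and the one on $\mathcal{A}^{N}_{n,3}$ that sits on the left side of \eqref{3.3.1}. I would split into two geometric cases. If $x_{n}$ stays bounded away from $0$, then $|x|\geq c>0$ on $\mathcal{A}^{N}_{n,2}$ for $n$ large, and trivially $\int\mu u_{n}^{2}/|x|^{2s}\,dx\leq C\int u_{n}^{2}\,dx\leq C\int(|u_{n}|^{2^{*}_{s}}+1)\,dx$. If $x_{n}=0$, then $|x|\gtrsim\Lambda_{n}^{-1/2}$ on the annulus so $|x|^{-2s}\leq C\Lambda_{n}^{s}$; combining with H\"older and the measure bound $|\mathcal{A}^{N}_{n,2}|\leq C\Lambda_{n}^{-N/2}$ yields $\int u_{n}^{2}\,dx\leq\bigl(\int|u_{n}|^{2^{*}_{s}}\,dx\bigr)^{2/2^{*}_{s}}\Lambda_{n}^{-s}$, whence $\int\mu u_{n}^{2}/|x|^{2s}\,dx\leq C\bigl(\int|u_{n}|^{2^{*}_{s}}\,dx\bigr)^{2/2^{*}_{s}}\leq C\int|u_{n}|^{2^{*}_{s}}\,dx+C$ by Young. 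The same argument controls the Hardy term on $\mathcal{A}^{N}_{n,3}$, yielding \eqref{3.3.1}. I expect this case analysis at $x_{n}=0$, where the scale of the annulus exactly matches the singularity of $|x|^{-2s}$, to be the main technical hurdle.

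Finally, for \eqref{3.3.2} I apply Proposition \ref{prop3.1} with the choices $p=2^{*}_{s}$ and $p=2$, together with $|\bar{u}_{n}|\leq\bar{w}_{n}$ (by the maximum principle applied to the extension, where $w_{n}=|u_{n}|$) to pass from estimates on $\bar{w}_{n}^{2}$ to estimates on $\bar{u}_{n}^{2}$. These give $\int_{\mathcal{A}^{N}_{n,2}}|u_{n}|^{2^{*}_{s}}\,dx\leq C\Lambda_{n}^{N\cdot 2^{*}_{s}/(2p_{1})-N/2}$ and $\Lambda_{n}\int_{\mathcal{A}^{N+1}_{n,2}}t^{1-2s}\bar{u}_{n}^{2}\,dxdt\leq C\Lambda_{n}^{(2s-N)/2+N/p_{1}}$, while the constant contribution is only $C\Lambda_{n}^{-N/2}$. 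A short computation using $p_{1}>2^{*}_{s}$ shows the weighted $\bar{u}_{n}^{2}$ exponent dominates the other two, so both quantities on the left of \eqref{3.3.1} are bounded by $C\Lambda_{n}^{(2s-N)/2+N/p_{1}}$, which is exactly \eqref{3.3.2}.
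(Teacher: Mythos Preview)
Your overall strategy---test the extended equation with $\eta^{2}\bar u_{n}$ for a cutoff $\eta$ supported in $\mathcal{A}^{N+1}_{n,2}$ and equal to $1$ on $\mathcal{A}^{N+1}_{n,3}$, then invoke Proposition~\ref{prop3.1}---matches the paper, and your derivation of \eqref{3.3.2} from \eqref{3.3.1} (comparing the three exponents and using $p_{1}>2^{*}_{s}$) is carried out the same way. The one substantive difference, and the place where your argument has a genuine gap, is the treatment of the Hardy term.

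You move $\int_{\mathcal{A}^{N}_{n,2}}\mu u_{n}^{2}|x|^{-2s}\,dx$ to the right-hand side and try to bound it by a pointwise estimate on $|x|^{-2s}$ through the dichotomy ``$x_{n}$ bounded away from $0$'' versus ``$x_{n}=0$''. These two cases are not exhaustive: by Proposition~\ref{propd.1} the slowest bubble may be centered at some $x_{n}\ne 0$ with $\Lambda_{n}|x_{n}|\to\infty$ yet $x_{n}\to 0$. Nothing rules out $|x_{n}|\sim(\bar C+2)\Lambda_{n}^{-1/2}$; when $m=0$ the origin is not a concentration point, so the safe annulus need not avoid it, and then $0\in\mathcal{A}^{N}_{n,2}$ and your pointwise bound $|x|^{-2s}\le C\Lambda_{n}^{s}$ fails outright.

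The paper sidesteps this by keeping the Hardy term on the \emph{left} of the tested identity and absorbing it via the Hardy inequality applied to the localized function $\phi_{n}u_{n}$. Using the algebraic identity
\[
\int_{\mathcal D} t^{1-2s}\nabla\bar u_{n}\cdot\nabla(\bar\phi_{n}^{2}\bar u_{n})\,dxdt
=\int_{\mathcal D} t^{1-2s}\bigl|\nabla(\bar\phi_{n}\bar u_{n})\bigr|^{2}\,dxdt
-\int_{\mathcal D} t^{1-2s}|\nabla\bar\phi_{n}|^{2}\,\bar u_{n}^{2}\,dxdt
\]
together with $\int_{\Omega}\mu(\phi_{n}u_{n})^{2}|x|^{-2s}\,dx\le(\mu/\bar\mu)\int_{\mathcal D}t^{1-2s}|\nabla(\bar\phi_{n}\bar u_{n})|^{2}\,dxdt$ (valid because the $s$-harmonic extension minimizes the weighted Dirichlet energy among all extensions with the same trace, so any extension of $\phi_{n}u_{n}$ may be used on the right), one arrives at
\[
\Bigl(1-\frac{\mu}{\bar\mu}\Bigr)\int_{\mathcal D}t^{1-2s}\bigl|\nabla(\bar\phi_{n}\bar u_{n})\bigr|^{2}\,dxdt
\le C\!\int_{\mathcal A^{N}_{n,2}}\!\bigl(|u_{n}|^{2^{*}_{s}}+1\bigr)\,dx
+C\Lambda_{n}\!\int_{\mathcal A^{N+1}_{n,2}}\!t^{1-2s}\bar u_{n}^{2}\,dxdt.
\]
Both the gradient integral over $\mathcal{A}^{N+1}_{n,3}$ and the Hardy integral over $\mathcal{A}^{N}_{n,3}$ then follow immediately (the latter by reusing the Hardy inequality), with no case analysis on the position of $x_{n}$ relative to the origin. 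Your argument is repaired by inserting this step in place of the geometric split.
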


\begin{proof}
Let $\bar{\phi}_{n}\in C^{\infty}_{0}(\mathcal {A}^{N+1}_{n,2})$ be a function
with $\bar{\phi}_{n}=1$ in $\mathcal {A}^{N+1}_{n,3}$, $0\leq\bar{\phi}_{n}\leq 1$
and $|\nabla \bar{\phi}_{n}|\leq C\Lambda_{n}^{\frac{1}{2}}.$

From
$$
\int_{\mathcal{D}}t^{1-2s}\nabla  \bar{u}_n\nabla (\bar{\phi}_{n}^{2}\bar{u}_{n})dxdt
-\int_{\Omega}\frac{\mu \phi_{n}^{2}u^{2}_{n}}{|x|^{2s}} dx\leq
\int_{\Omega}(2|u_{n}|^{2^{*}_s-1}+A)\phi_{n}^{2}|u_{n}|dx,
$$
we can prove \eqref{3.3.1}.
Since $p_{1}>2^{*}_s$, we see
$$
\frac{2^*_sN}{2p_{1}}-\frac{N}{2}
<\frac{N}{p_{1}}-\frac{N-2s}{2}.
$$ Thus
from \eqref{3.3.1} and Proposition \ref{prop3.1}, we have
$$
\int_{\mathcal
{A}^{N+1}_{n,3}}t^{1-2s}|\nabla\bar{ u}_{n}(x,t)|^{2}dxdt,
\int_{\mathcal
{A}^{N}_{n,3}}\frac{\mu| u_{n}|^{2}}{|x|^{2s}}dx \leq
C\Lambda_{n}^{\frac{2^*_sN}{2p_{1}}-\frac{N}{2}}+C\Lambda_{n}^{\frac{N}{p_{1}}-\frac{N-2s}{2}}\leq
\tilde{C}\Lambda_{n}^{\frac{2s-N}{2}+\frac{N}{p_{1}}}.
$$
\end{proof}

\section{Proof of the Main Result}

Choose an $\ell_{n}\in[\bar{C}+2,\bar{C}+3]$ such that
\begin{equation}\label{4.1}
\begin{array}{ll}
&\ds\int_{\partial
B^N_{\ell_{n}\Lambda_{n}^{-\frac{1}{2}}}(x_{n})}\Bigl(\Lambda_{n}^{-1}|u_{n}|^{2^{*}_s-\epsilon_{n}}
+|u_{n}|^{2}+\Lambda_{n}^{-1}\frac{\mu u_{n}^{2}}{|x|^{2s}} \Bigl)dS_x\vspace{0.2cm}\\
&\leq C\Lambda_{n}^{\frac{2s-1}{2}}\ds \int_{\mathcal
{A}^{N}_{n,3}}\Bigl(\Lambda_{n}^{-1}|u_{n}|^{2^{*}_s-\epsilon_{n}}
+|u_{n}|^{2}+\Lambda_{n}^{-1}\frac{\mu u_{n}^{2}}{|x|^{2s}} \Bigl)dx,
\end{array}
\end{equation}
and
\begin{equation}\label{4.1.1}
\ds\int_{\partial
B^{N+1}_{\ell_{n}\Lambda_{n}^{-\frac{1}{2}}}(x_{n})}\Lambda_{n}^{-1}t^{1-2s}\big(|\nabla\bar{ u}_{n}|^{2}+|\bar{ u}_{n}|^2\big)dS_z
\leq C\Lambda_{n}^{\frac{2s-1}{2}}\ds \int_{\mathcal
{A}^{N+1}_{n,3}}\Lambda_{n}^{-1}t^{1-2s}\big(|\nabla\bar{ u}_{n}|^{2}+|\bar{ u}_{n}|^2\big)dxdt.
\end{equation}
Applying Proposition \ref{prop3.1}, \eqref{4.1}, \eqref{4.1.1} and \eqref{3.3.2},
we get
\begin{equation}\label{4.2}
\begin{array}{ll}
&\ds\int_{\partial
B^N_{\ell_{n}\Lambda_{n}^{-\frac{1}{2}}}(x_{n})}\Bigl(\Lambda_{n}^{-1}|u_{n}|^{2^{*}_s-\epsilon_{n}}
+|u_{n}|^{2}+\Lambda_{n}^{-1}\frac{\mu u_{n}^{2}}{|x|^{2s}} \Bigl)dS_x
\vspace{0.2cm}\\&+
\ds\int_{\partial
B^{N+1}_{\ell_{n}\Lambda_{n}^{-\frac{1}{2}}}(x_{n})}\Lambda_{n}^{-1}t^{1-2s}\big(|\nabla\bar{ u}_{n}|^{2}+|\bar{ u}_{n}|^2\big)dS_z\vspace{0.2cm}\\
&\leq
C\Lambda_{n}^{\frac{2s-1}{2}}\Bigl(C\Lambda_{n}^{-1}\Lambda_{n}^{\frac{N(2^{*}_s-\epsilon_{n})}{2p_{1}}
-\frac{N}{2}}+C\Lambda_{n}^{\frac{N}{p_{1}}-\frac{N}{2}}
+C\Lambda_{n}^{-1}
\Lambda_{n}^{\frac{N}{p_{1}}+\frac{2s-N}{2}}\Bigl)\vspace{0.2cm}\\
&\leq C\Lambda_{n}^{\frac{2s-1-N}{2}+\frac{N}{p_{1}}},
\end{array}
\end{equation}
since $-1+\frac{N(2^{*}_s-\epsilon_{n})}{2p_{1}}
-\frac{N}{2}<-\frac{N}{2}+\frac{N}{p_{1}}.$

\begin{proof}[\textbf{Proof of Theorem \ref{thm1.2}}]

We have three different cases: (i)
$B^N_{\ell_{n}\Lambda_{n}^{-\frac{1}{2}}}(x_{n})\cap(\R^{N}\backslash\Omega)\neq\emptyset$;
(ii) $B^N_{\ell_{n}\Lambda_{n}^{-\frac{1}{2}}}(x_{n})\subset\Omega$ and $0\not\in \overline{B_{\ell_{n}\Lambda_{n}^{-\frac{1}{2}}}(x_{n})}$; (iii)$B^N_{\ell_{n}\Lambda_{n}^{-\frac{1}{2}}}(x_{n})\subset\Omega$ and $0\in \overline{B_{\ell_{n}\Lambda_{n}^{-\frac{1}{2}}}(x_{n})}$.

Firstly, define
$\partial_+\mathcal{F}=\big\{z=(x,t)\in\R^{N+1}_{+}:\,(x,t)\in\partial\mathcal{F}, \, t>0\big\}$ and
$\partial_b\mathcal{F}=\big\{x\in\R^N:\,(x,0)\in\partial\mathcal{F}\cap \R^N\times\{0\}\big\}$.
 Let $B_n^N=B^N_{\ell_{n}\Lambda_{n}^{-\frac{1}{2}}}(x_{n})\cap \Om$, $B^{N+1}_n=B^{N+1}_{\ell_{n}\Lambda_{n}^{-\frac{1}{2}}}(x_{n})\cap \mathcal{D}$,
 and $p_{n}=2^{*}_s-\epsilon_{n}$. Then from Proposition \ref{propc.1}, we have the following local
Pohozaev identity for $u_{n}$ on $B^{N+1}_n$,
\begin{equation}\label{4.3}
\begin{array}{ll}
&\Big(\ds\frac{N}{p_n}-\ds\frac{N-2s}{2}\Big)\ds\int_{B_n^N}|u_n|^{p_n}dx+sa\ds\int_{B_n^N}|u_n|^2dx
+s\mu\ds\int_{B_n^N}\frac{x\cdot x_0|u_n|^2}{|x|^{2s+2}}dx\vspace{0.2cm}\\
&=\ds\frac{1}{2}\ds\int_{\partial B_n^N}\Big(a+\frac{\mu}{|x|^{2s}}\Big)|u_n|^2(x-x_0)\cdot\nu_xdS_x
+\frac{1}{p_n}\ds\int_{\partial B_n^N}|u_n|^{p_n}(x-x_0)\cdot\nu_xdS_x\vspace{0.2cm}\\
&\quad+\ds\int_{\partial_+B^{N+1}_n}t^{1-2s}\Big((z-z_0,\nabla\bar{u}_n)\nabla\bar{u}_n-(z-z_0)\frac{|\nabla\bar{u}_n|^2}{2},\nu_z\Big)dS_z\vspace{0.2cm}\\
&\quad+\ds\frac{N-2s}{2}\ds\int_{\partial_+B^{N+1}_n}t^{1-2s}\bar{u}_n\frac{\partial \bar{u}_n}{\partial\nu_z}dS_z,
\end{array}
\end{equation}
where $z_0=(x_0,0)$, $z=(x,t)$ and $x_0$ in \eqref{4.3} is chosen as follows. In case (i), we take
$x_{0}\in \R^{N}\backslash\Omega$ with $|x_{0}-x_{n}|\leq
2\ell_{n}\Lambda_{n}^{-\frac{1}{2}}$ and $\nu_x\cdot(x-x_{0})\leq 0$ in
$\partial \Omega\cap B_{n}^N$. Then we see from the fact $\nu_z=(\nu_x,0)$ that
$\nu_z\cdot(z-z_0)=\nu_x\cdot(x-x_0)\leq0$  and with this $x_{0}$, we can check that $x_{0}\cdot x\geq0$ in $ B_{n}^N$.
In case (ii), we take a point
$x_{0}=x_{n}$ Then  $x_{0}\cdot x\geq0$ in $ B_{n}^N$. In case (iii), we take $x_{0}=0$. Thus, in
any case $x_{0}\cdot x\geq0$ in $ B_{n}^N$.

In fact, in case (i) and case (ii), $u_n\in C^{2s}(B_n^N)$. So, \eqref{4.3} is the usual local Pohozaev identity.
Now we prove that \eqref{4.3} holds as well in case (iii).

To see this, since $\ds\int_\Om|(-\Delta)^{\frac{s}{2}}u_n|^2dx=\ds\int_{\mathcal{D}}t^{1-2s}|\nabla\bar{u}_n|^2dxdt\leq C$, we can
choose $\theta_j\rightarrow 0$ as $j\rightarrow +\infty$ such that
\begin{equation}\label{4.3.1}
\theta_j\ds\int_{\pa_+ B^{N+1}_{\theta_j}(0)}t^{1-2s}|\nabla\bar{u}_n|^2dS_z+
\theta_j\ds\int_{\pa B^{N}_{\theta_j}(0)}|u_n|^{p_n}dS_x+\theta_j\ds\int_{\partial  B^{N}_{\theta_j}(0)}
\Big(a+\frac{\mu}{|x|^{2s}}\Big)|u_n|^2dS_x\rightarrow0.
\end{equation}
Let $B^{N+1}_{n,\theta_j}=B^{N+1}_n\backslash B^{N+1}_{\theta_j}(0)$. Then $u_n\in C^{2s}(\overline{B^{N}_{n,\theta_j}})$ and
\begin{equation}\label{4.3.2}
\begin{array}{ll}
&\Big(\ds\frac{N}{p_n}-\ds\frac{N-2s}{2}\Big)\ds\int_{B^{N}_{n,\theta_j}}|u_n|^{p_n}dx+sa\ds\int_{B^{N}_{n,\theta_j}}|u_n|^2dx
+s\mu\ds\int_{B^{N}_{n,\theta_j}}\frac{x\cdot x_0|u_n|^2}{|x|^{2s+2}}dx\vspace{0.2cm}\\
&=\ds\frac{1}{2}\ds\int_{\partial B^{N}_{n,\theta_j}}\Big(a+\frac{\mu}{|x|^{2s}}\Big)|u_n|^2x\cdot\nu_xdS_x
+\frac{1}{p_n}\ds\int_{\partial B^{N}_{n,\theta_j}}|u_n|^{p_n}x\cdot\nu_xdS_x\vspace{0.2cm}\\
&\quad+\ds\int_{\partial_+B^{N+1}_{n,\theta_j}}t^{1-2s}\Big((z,\nabla\bar{u}_n)
\nabla\bar{u}_n-z\frac{|\nabla\bar{u}_n|^2}{2},\nu_z\Big)dS_z\vspace{0.2cm}\\
&\quad+\ds\frac{N-2s}{2}\ds\int_{\partial_+B^{N+1}_{n,\theta_j}}t^{1-2s}\bar{u}_n\frac{\partial \bar{u}_n}{\partial\nu_z}dS_z.
\end{array}
\end{equation}
From \eqref{4.3.1} and Proposition \ref{propbb3}, we have
\begin{equation}\label{4.3.3}
\begin{array}{ll}
\Big|\ds\int_{\partial_+B^{N+1}_{\theta_j}(0)}t^{1-2s}\bar{u}_n\frac{\partial \bar{u}_n}{\partial\nu_z}dS_z\Big|
&\leq \Big(\ds\int_{\partial_+B^{N+1}_{\theta_j}(0)}t^{1-2s}|\nabla\bar{u}_n|^2dS_z\Big)^{\frac{1}{2}}
\Big(\ds\int_{\partial_+B^{N+1}_{\theta_j}(0)}t^{1-2s}|\bar{u}_n|^2dS_z\Big)^{\frac{1}{2}}\vspace{0.2cm}\\
&\leq C\theta_j^{-\frac{1}{2}}\theta_j^{1+\beta}=o(1),
\end{array}
\end{equation}
and
\begin{equation}\label{4.3.4}
\begin{array}{ll}
&\ds\frac{1}{2}\ds\int_{\partial B^{N}_{\theta_j}(0)}\Big(a+\frac{\mu}{|x|^{2s}}\Big)|u_n|^2x\cdot\nu_xdS_x
+\frac{1}{p_n}\ds\int_{\partial B^{N}_{\theta_j}(0)}|u_n|^{p_n}x\cdot\nu_xdS_x\vspace{0.2cm}\\
&\quad+\ds\int_{\partial_+B^{N+1}_{\theta_j}(0)}t^{1-2s}\Big((z,\nabla\bar{u}_n)
\nabla\bar{u}_n-z\frac{|\nabla\bar{u}_n|^2}{2},\nu_z\Big)dS_z\vspace{0.2cm}\\
&=O\Big(\theta_j\ds\int_{\partial B^{N}_{\theta_j}(0)}|u_n|^{p_n}dS_x
+\theta_j\ds\int_{\partial B^{N}_{\theta_j}(0)}\big(a+\frac{\mu}{|x|^{2s}}\big)|u_n|^2dS_x
+\theta_j\ds\int_{\partial_+B^{N+1}_{\theta_j}(0)}t^{1-2s}|\nabla\bar{u}_n|^2dS_z\Big)=o(1).
\end{array}
\end{equation}
So, letting $j\rightarrow +\infty$ in \eqref{4.3.2}, from \eqref{4.3.3} and \eqref{4.3.4}, we can get \eqref{4.3}.

Since $p_{n}< 2^{*}_s,$
 the first term in the left hand side of \eqref{4.3} is
nonnegative and by the choice of $x_{0}$, the third term in the left hand side of \eqref{4.3}
is also nonnegative. Hence \eqref{4.3} can be rewritten as
\begin{equation}\label{4.4}
\begin{array}{ll}
sa\ds\int_{B_n^N}|u_n|^2dx
&\leq\ds\frac{1}{2}\ds\int_{\partial B_n^N}\Big(a+\frac{\mu}{|x|^{2s}}\Big)|u_n|^2(x-x_0)\cdot\nu_xdS_x
+\frac{1}{p_n}\ds\int_{\partial B_n^N}|u_n|^{p_n}(x-x_0)\cdot\nu_xdS_x\vspace{0.2cm}\\
&\quad+\ds\int_{\partial_+B^{N+1}_n}t^{1-2s}\Big((z-z_0,\nabla\bar{u}_n)\nabla\bar{u}_n-(z-z_0)\frac{|\nabla\bar{u}_n|^2}{2},\nu_z\Big)dS_z\vspace{0.2cm}\\
&\quad+\ds\frac{N-2s}{2}\ds\int_{\partial_+B^{N+1}_n}t^{1-2s}\bar{u}_n\frac{\partial \bar{u}_n}{\partial\nu_z}dS_z.
\end{array}
\end{equation}

Now we decompose $\partial B^N_{n}$ into $\partial B^N_{n}=\partial_{i}
B^N_{n}\cup\partial_{e} B^N_{n}$, where $\partial_{i} B^N_{n}=\partial
B^N_{n}\cap\Omega$ and $\partial_{e} B^N_{n}=\partial
B^N_{n}\cap\partial \Omega.$ Similarly,
$\pa_+B^{N+1}_n=\partial_{i}
B^{N+1}_{n}\cup\partial_{e} B^{N+1}_{n}$, where $\partial_{i} B^{N+1}_{n}=\pa_+B^{N+1}_n
\cap\mathcal{D}$ and $\partial_{e} B^{N+1}_{n}=\pa_+B^{N+1}_n
\cap\pa\mathcal{D}.$

Observing that $u_{n}=0$ on $\partial_{e} B^N_{n}$ and $\bar{u}_n=0$ on $\partial_{e} B^{N+1}_{n}$, we have
\begin{eqnarray*}
&&\ds\frac{1}{2}\ds\int_{\partial_e B_n^N}\Big(a+\frac{\mu}{|x|^{2s}}\Big)|u_n|^2(x-x_0)\cdot\nu_xdS_x
+\frac{1}{p_n}\ds\int_{\partial_e B_n^N}|u_n|^{p_n}(x-x_0)\cdot\nu_xdS_x\\
&&\quad+\ds\frac{N-2s}{2}\ds\int_{\partial_eB^{N+1}_n}t^{1-2s}\bar{u}_n\frac{\partial \bar{u}_n}{\partial\nu_z}dS_z=0.
\end{eqnarray*}

Also, noting that $\nabla \bar{u}_n=\pm |\nabla \bar{u}_n|\nu_z$ on $\partial_{e} B^{N+1}_{n}$, we find
\begin{eqnarray*}
&&\ds\int_{\partial_eB^{N+1}_n}t^{1-2s}\Big((z-z_0,\nabla\bar{u}_n)\nabla\bar{u}_n-(z-z_0)\frac{|\nabla\bar{u}_n|^2}{2},\nu_z\Big)dS_z\\
&&=\ds\int_{\partial_eB^{N+1}_n}t^{1-2s}\frac{|\nabla\bar{u}_n|^2}{2}(z-z_0,\nu_z)dS_z\leq0.
\end{eqnarray*}

Hence, we can rewrite \eqref{4.4} as
\begin{equation}\label{4.9}
\begin{array}{ll}
sa\ds\int_{B_n^N}|u_n|^2dx
&\leq\ds\frac{1}{2}\ds\int_{\partial_i B_n^N}\Big(a+\frac{\mu}{|x|^{2s}}\Big)|u_n|^2(x-x_0)\cdot\nu_xdS_x
+\frac{1}{p_n}\ds\int_{\partial_i B_n^N}|u_n|^{p_n}(x-x_0)\cdot\nu_xdS_x\vspace{0.2cm}\\
&\quad+\ds\int_{\partial_iB^{N+1}_n}t^{1-2s}\Big((z-z_0,\nabla\bar{u}_n)\nabla\bar{u}_n-(z-z_0)\frac{|\nabla\bar{u}_n|^2}{2},\nu_z\Big)dS_z\vspace{0.2cm}\\
&\quad+\ds\frac{N-2s}{2}\ds\int_{\partial_iB^{N+1}_n}t^{1-2s}\bar{u}_n\frac{\partial \bar{u}_n}{\partial\nu_z}dS_z.
\end{array}
\end{equation}

By \eqref{4.2}, noting that $|x-x_{0}|\leq
C\Lambda_{n}^{-\frac{1}{2}}$ for $x\in\partial_{i}B^N_{n}$, and $|z-z_{0}|\leq
C\Lambda_{n}^{-\frac{1}{2}}$ for $x\in\partial_{i}B^{N+1}_{n}$, we obtain
\begin{equation}\label{4.10}
\begin{array}{ll}
\text{~RHS~ of}\,\, \eqref{4.9}&\leq
C\Lambda_{n}^{-\frac{1}{2}}\ds\int_{\partial_{i} B^N_{n}}\Bigl(
u_{n}^{2}+|u_n|^{p_n}+\mu\frac{u_{n}^{2}}{|x|^{2s}}\Bigl)dS_x+C\int_{\partial_{i} B^{N+1}_{n}}t^{1-2s}|\nabla
u_{n}||u_{n}| dS_z \vspace{0.2cm}\\
&\quad+C\Lambda_{n}^{-\frac{1}{2}}\ds\int_{\partial_{i} B^{N+1}_{n}}t^{1-2s}|\nabla
u_{n}|^2 dS_z \vspace{0.2cm}\\
&\leq C\Lambda_{n}^{-\frac{1}{2}}\Lambda_{n}^{\frac{2s-1-N}{2}+\frac{N}{p_{1}}}
+C\Lambda_{n}^{1+\frac{2s-1-N}{2}+\frac{N}{p_{1}}}\vspace{0.2cm}\\
&\leq C\Lambda_{n}^{\frac{2s-N}{2}+\frac{N}{p_{1}}}.
\end{array}
\end{equation}

 Recalling
that in the proof of Lemma~\ref{lem2.5}, we have the decomposition
\[
u_n=u_0 +\sum_{j=1}^m
\rho_{0,\Lambda_{n,j}}(U_j)+\sum_{j=m+1}^h
\rho_{x_{n,j},\Lambda_{n,j}}(U_j)+\omega_n=:u_0+u_{n,1}+u_{n,2},
\]
 with $\|u_{n,2}\|\to 0$ as $n\to+\infty$.  By Proposition \ref{propbb3} and Lemma \ref{lemb.1}, we can verify that if $N>6s$,
\begin{equation}\label{e4.8}
\int_{\R^N} |U_j|^2dx<+\infty,\quad j=1,2,\cdots,h.
\end{equation}

On the other hand, let $B^N_{n,*}=B^N_{L\Lambda_{n}^{-1}}(x_{n})$, where
$L>0$ is so large that

\[
\int_{B^N_L(0)} |U_j|^2dx>0,\quad j=1,2,\cdots,h.
\]

Since $u_n=0$ in $\R^N\setminus \Omega$, we have
\begin{equation}\label{4.11}
\begin{split}
\int_{B^N_n}|u_n|^2\,dx&=\int_{B^N_{\ell_n\Lambda_n^{-\frac12}}(x_n)}|u_n|^2\,dx
\ge \int_{B^N_{n,*}}|u_n|^2\,dx\\
\qquad\qquad&\ge  \frac{1}{2}\int_{B^N_{n,*}}|u_{n,1}|^2dx-C\int_{B^N_{n,*}}
|u_0|^2dx-C\int_{B^N_{n,*}}|u_{n,2}|^2dx.
\end{split}
\end{equation}
Moreover, we have
\begin{equation}\label{4.12}
\begin{array}{ll}
\ds\int_{B^N_{n,*}}|u_{0}|^{2}dx&\leq
\Bigl(\ds\int_{B^N_{n,*}}|u_{0}|^{2^{*}_s}dx\Bigl)^{\frac{2}{2^{*}_s}}|B^N_{n,*}|^{1-\frac{2}{2^{*}_s}}
\leq
C\Lambda_{n}^{-2s}\|u_{0}\|^{2}_{L^{2^{*}_s}(B^N_{n,*})}=o(1)\Lambda_{n}^{-2s},
\end{array}
\end{equation}
and
\begin{equation}\label{4.13}
\begin{array}{ll}
\ds\int_{B^N_{n,*}}|u_{n,2}|^{2}dx\leq
C\Bigl(\ds\int_{B^N_{n,*}}|u_{n,2}|^{2^{*}_s}dx\Bigl)^{\frac{2}{2^{*}_s}}\Lambda_{n}^{-2s}=o(1)\Lambda_{n}^{-2s},
\end{array}
\end{equation}
since $\|u_{n,2}\|\rightarrow 0$ as $n\rightarrow\infty.$

On the other hand,  we may assume that
$\rho_{x_{n,1},\Lambda_{n,1}}(U_1)$ is the bubble with slowest
concentration rate. Then

\[
\int_{B^N_{n,*}}|u_{n,1}|^2dx\ge  \frac12
\int_{B^N_{n,*}}|\rho_{x_{n,1},\Lambda_{n,1}}(U_1)|^2dx+ O\Bigl(
 \sum_{j=2}^h
\int_{B^N_{n,*}}|\rho_{x_{n,j},\Lambda_{n,j}}(U_j)|^2dx \Bigr).
\]

By direct calculations, we can obtain
\[
\int_{B^N_{n,*}}|\rho_{x_{n,1},\Lambda_{n,1}}(U_1)|^2dx= \Lambda_{n,1}^{-2s}
\int_{B^N_L(0)} |U_1|^2dx\ge C' \Lambda_{n,1}^{-2s},
\]
for some constant $C'>0$.  Similarly, we have
\begin{equation}\label{1-5-8}
\int_{B^N_{n,*}}|\rho_{x_{n,j},\Lambda_{n,j}}(U_j)|^2dx= \Lambda_{n,j}^{-2s}
\int_{(B^N_{n,*})_{x_{n,j},\Lambda_{n,j}}}|U_j|^2dx,
\end{equation}
 where we use the notation
 $E_{x,\Lambda}=\{ y:  \Lambda^{-1}
y+ x\in E\}$  for any set  $E$.

If  $\frac{\Lambda_{n,j}}{\Lambda_{n,1}}\to +\infty$, then we obtain
from \eqref{1-5-8}
\[
\int_{B^N_{n,*}}|\rho_{x_{n,j},\Lambda_{n,j}}(U_j)|^2dx= o\bigl(
\Lambda_{n,1}^{-2s}\bigr).
\]

If  $\frac{\Lambda_{n,j}}{\Lambda_{n,1}}\le C<+\infty$, then
\[
\begin{split}
&(B^N_{n,*})_{x_{n,j},\Lambda_{n,j}} = \bigl\{ y:  \Lambda_{n,j}^{-1} y +
x_{n,j} \in B^N_{n,*}
\bigr\}\\
 = & \bigl\{ y:  |\Lambda_{n,j}^{-1} y +   x_{n,j}-x_{n,1}|\le L\Lambda_{n,1}^{-1}
\bigr\} \subset \bigl\{ y:  |y + \Lambda_{n,j} ( x_{n,j}-x_{n,1})|
\le C\bigr\}.
\end{split}
\]
Since  $ |\Lambda_{n,j} ( x_{n,j}-x_{n,1})|\to +\infty$  as $n\to
+\infty$, we find that $(B^N_{n,*})_{x_{n,j},\Lambda_{n,j}}$ moves to
infinity. Hence it follows from \eqref{e4.8} and \eqref{1-5-8} that

\[
\int_{B^N_{n,*}}|\rho_{x_{n,j},\Lambda_{n,j}}(U_j)|^2dx= o\bigl(
\Lambda_{n,1}^{-2s}\bigr).
\]

Therefore, we have proved that
 there exists a constant $C'>0$, such that
\begin{equation}\label{4.14}
\int_{B^N_{n,*}}|u_{n,1}|^2dx\ge C'\Lambda_n^{-2s}.
\end{equation}

Hence, from \eqref{4.11} to \eqref{4.14}, we get

\begin{equation}\label{4.15}
\text{~LHS~ of}\,\, \eqref{4.9}\geq \frac{C'}{4}\Lambda_{n}^{-2s}.
\end{equation}

Combing \eqref{4.10} and \eqref{4.15}, we obtain
\begin{equation}\label{4.16}
\Lambda_{n}^{-2s}\leq C \Lambda_{n}^{\frac{2s-N}{2}+\frac{N}{p_{1}}},
\end{equation}
where $p_{1}>2^{*}_s$ is any constant, satisfying $p_{1}<\frac{2^*_s\sqrt{\bar{\mu}}}{\sqrt{\bar{\mu}}-\sqrt{\bar{\mu}-\mu}}$. Choose
$p_{1}=\frac{2N}{N-6s}$ with $p_{1}+\delta<\frac{2^*_s\sqrt{\bar{\mu}}}{\sqrt{\bar{\mu}}-\sqrt{\bar{\mu}-\mu}}$, where $\delta>0$ is
a small constant. Then from the assumption on $\mu$, we see
$2s<\frac{N-2s}{2}-\frac{N}{p_1}$. So, we obtain a contradiction to \eqref{4.16}.
\end{proof}

\begin{proof}[\textbf{Proof of Theorem \ref{thm1.1}}]

 This is a direct consequence of Theorem \ref{thm1.2}. See for example \cite{cps,cs1,cs,yyy}.

\end{proof}

\appendix

\section{{\bf Some basic estimates on linear problems }}
In this section, we deduce some elementary estimates for solutions of linear elliptic problem involving Hardy potential.
These estimates are of independent interest.
\begin{lem}\label{lema.1-1}
Let $u\in H^s_0(\Omega)$ be a solution of \eqref{1.1}. Then one has
$$u\in L^p(\Omega),\ \ \forall p<\frac{2^*_s\sqrt{\bar{\mu}}}{\sqrt{\bar{\mu}}-\sqrt{\bar{\mu}-\mu}}.$$
\end{lem}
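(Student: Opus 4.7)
The plan is to run a Moser-type iteration on equation \eqref{1.1}, using a fractional Stroock--Varopoulos inequality to extract a quadratic form in powers of $|u|$ and then absorbing the singular Hardy term via the sharp Hardy inequality \eqref{1.2.2}. The starting point is Sobolev: $u\in H^s_0(\Om)\hookrightarrow L^{2^*_s}(\Om)$. Since $|x|^{-2s}$ is smooth outside any neighborhood of the origin, the only obstruction to higher integrability is the behaviour of $u$ near $x=0$, which is governed by the principal exponent of the Hardy-type operator.

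For $\beta\ge 0$, I would test \eqref{1.1} against the truncated function $\varphi_L:=u\cdot \min(|u|,L)^{2\beta}\in H^s_0(\Om)$ (admissible since $\varphi_L$ is bounded). Applying the pointwise algebraic bound
$$
(a-b)\bigl(a|a|^{2\beta}-b|b|^{2\beta}\bigr)\;\ge\;\tfrac{2\beta+1}{(\beta+1)^2}\bigl(a|a|^\beta-b|b|^\beta\bigr)^2
$$
under the Gagliardo double-integral representation of $(-\Delta)^{s/2}$ gives, after passing $L\to\infty$, the Stroock--Varopoulos-type estimate
$$
\ds\int_\Om (-\Delta)^{s/2}u\cdot(-\Delta)^{s/2}\bigl(|u|^{2\beta}u\bigr)\,dx\;\ge\;\tfrac{2\beta+1}{(\beta+1)^2}\bigl\|\,|u|^\beta u\bigr\|_{H^s_0(\Om)}^{2}.
$$
Combining this with Hardy applied to $|u|^\beta u$ and then with the tested weak form of \eqref{1.1} yields
$$
\Bigl(\tfrac{2\beta+1}{(\beta+1)^2}-\tfrac{\mu}{\bar\mu}\Bigr)\bigl\|\,|u|^\beta u\bigr\|_{H^s_0(\Om)}^{2}\;\le\;\ds\int_\Om\bigl(|u|^{2^*_s-2}+a\bigr)|u|^{2\beta+2}\,dx.
$$
The critical term is treated by the standard splitting $|u|^{2^*_s-2}=|u|^{2^*_s-2}\chi_{\{|u|>M\}}+|u|^{2^*_s-2}\chi_{\{|u|\le M\}}$: by absolute continuity of the integral (since $u\in L^{2^*_s}$) the first piece has arbitrarily small $L^{N/2s}$-norm and is absorbed into the left side using H\"older and Sobolev on $|u|^\beta u$, while the second piece contributes a harmless $CM^{2^*_s-2}\|u\|_{2\beta+2}^{2\beta+2}$. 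Hence, whenever
$$
\tfrac{2\beta+1}{(\beta+1)^2}>\tfrac{\mu}{\bar\mu}\qquad\text{and}\qquad u\in L^{2\beta+2}(\Om),
$$
one concludes $|u|^\beta u\in L^{2^*_s}(\Om)$, i.e.\ $u\in L^{2^*_s(\beta+1)}(\Om)$.

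A direct computation rewrites the admissibility condition as the quadratic inequality $\mu(\beta+1)^2-2\bar\mu(\beta+1)+\bar\mu<0$, whose positive branch is $\beta+1<\sqrt{\bar\mu}/(\sqrt{\bar\mu}-\sqrt{\bar\mu-\mu})$. Iterating with $q_{k+1}=\frac{N}{N-2s}q_k$, starting from $q_0=2$, we may repeat the step as long as $q_k<\frac{2\sqrt{\bar\mu}}{\sqrt{\bar\mu}-\sqrt{\bar\mu-\mu}}$; the very last application of the step then produces every integrability index strictly below
$$
\tfrac{N}{N-2s}\cdot\tfrac{2\sqrt{\bar\mu}}{\sqrt{\bar\mu}-\sqrt{\bar\mu-\mu}}\;=\;\tfrac{2^*_s\sqrt{\bar\mu}}{\sqrt{\bar\mu}-\sqrt{\bar\mu-\mu}},
$$
exactly matching the claim. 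The main obstacle is technical: one must justify the truncation $\varphi_L$ as an admissible test function, pass to the limit $L\to\infty$ in the Stroock--Varopoulos step (which is delicate for sign-changing $u$ because it hinges on the Gagliardo double integral and monotone/dominated convergence), and keep all constants uniform along the iteration so that the absorption of the critical term survives up to the final step. Heuristically the threshold is sharp: it is precisely the $L^p$-integrability of the model singular solution $|x|^{-(N-2s)(\sqrt{\bar\mu}-\sqrt{\bar\mu-\mu})/(2\sqrt{\bar\mu})}$ of $(-\Delta)^sv-\mu v/|x|^{2s}=0$, which dictates the worst admissible behaviour of $u$ at the origin.
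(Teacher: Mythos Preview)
Your proposal is correct and matches the paper's approach: the paper merely cites Lemma~2.1 of \cite{cp} and omits all details, but that argument is precisely the Moser iteration with truncated power test functions and Hardy absorption that you describe, and the identical threshold computation $\frac{2q-1}{q^{2}}>\frac{\mu}{\bar\mu}\Leftrightarrow q<\frac{\sqrt{\bar\mu}}{\sqrt{\bar\mu}-\sqrt{\bar\mu-\mu}}$ appears verbatim in the paper's proof of the companion Lemma~\ref{lema.1}. One minor technical point: since $(-\Delta)^{s}$ in this paper is the \emph{spectral} fractional Laplacian, the Stroock--Varopoulos step is most cleanly carried out via the Caffarelli--Silvestre extension (exactly as in \eqref{ha.3}) rather than via the Gagliardo double integral, but this does not affect the structure or conclusion of your argument.
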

\begin{proof}
Just by the same argument as that of Lemma 2.1 in \cite{cp}, we can prove our result. So we omit it here.
\end{proof}

\begin{lem}\label{lema.1}
Let $w$ be a solution of
$$
\left\{%
\begin{array}{ll}
    (-\Delta)^s w-\ds\frac{\mu w}{|x|^{2s}} =a(x)v, & \hbox{$\text{in}~ \Omega$},\vspace{0.1cm} \\
   w=0,\,\, &\hbox{$\text{on}~\partial \Omega$}, \\
\end{array}%
\right.
$$
where $a(x)\geq 0,v\geq 0$ are functions and $a,v\in
C^{2s}(\Omega\backslash B_{\delta}(0))$ for any $\delta>0$ small. Then for any
$p>\frac{N}{N-2s}$ and $0\leq\mu<\bar{\mu}$ satisfying $p<\frac{2^*_s\sqrt{\bar{\mu}}}{\sqrt{\bar{\mu}}-\sqrt{\bar{\mu}-\mu}},$ there is a constant $C=C(p)$
such that
$$
||w||_{*,p}\leq
C||a||_{\frac{N}{2s}}||v||_{p}.
$$
\end{lem}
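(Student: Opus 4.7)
The plan is to combine H\"older's inequality on the source term with a sharp mapping property of the resolvent of the Hardy-fractional operator $(-\Delta)^s - \mu|x|^{-2s}$ on $\Omega$. Letting $q$ be defined by $\frac{1}{q} = \frac{1}{p} + \frac{2s}{N}$, the assumption $p > \frac{N}{N-2s}$ is equivalent to $q > 1$, and $p$ finite gives $q < \frac{N}{2s}$. By H\"older,
\[ \|av\|_q \le \|a\|_{\frac{N}{2s}}\|v\|_p, \]
so the problem reduces to proving the resolvent estimate $\|w\|_{*,p} \le C\|f\|_q$ for the weak solution $w$ of $(-\Delta)^s w - \mu w/|x|^{2s} = f$ with $f \in L^q(\Omega)$.

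To prove this resolvent estimate, I would represent $w$ via the Green function $G_\mu$ of the Hardy-fractional operator on $\Omega$ and use a two-sided pointwise bound of the form (cf.\ \cite{fa,bmp})
\[ G_\mu(x,y) \le C|x-y|^{-(N-2s)}\Bigl(\frac{|x-y|}{|x|+|x-y|}\Bigr)^{\gamma}\Bigl(\frac{|x-y|}{|y|+|x-y|}\Bigr)^{\gamma}, \]
where $\gamma = \frac{N-2s}{2} - \alpha_s$ and $\alpha_s \in [0,\frac{N-2s}{2})$ is determined by $\Upsilon_{\alpha_s} = \mu$. A Young-type convolution estimate for the integral operator with kernel $G_\mu$ then yields the $L^q \to L^p$ bound. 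The sharp integrability constraint is $\gamma p < N$, and a direct computation using $\Upsilon_{\alpha_s}=\mu$ shows this is equivalent to $p < \frac{2^*_s\sqrt{\bar{\mu}}}{\sqrt{\bar{\mu}}-\sqrt{\bar{\mu}-\mu}}$ (checked at the endpoints $\mu=0$ and $\mu = \bar{\mu}$).

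The weighted contribution in $\|w\|_{*,p}$ is handled by testing the equation against $|w|^{2p/2^*_s - 1}\text{sgn}(w)$ (after truncation and standard regularization), invoking the fractional Hardy inequality \eqref{1.2.2} to control $\mu\int_\Omega |w|^{2p/2^*_s}/|x|^{2s}$ by $\|(-\Delta)^{s/2}|w|^{p/2^*_s}\|_2^2$, and absorbing the Hardy term on the left using $\mu < \bar{\mu}$; the resulting inequality is closed with the $L^p$ bound for $w$ obtained above.

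\textbf{Main obstacle.} The core technical difficulty is the pointwise Green function estimate for the Hardy-fractional operator on the bounded domain $\Omega$: the whole-space version is by now classical, but transferring it to $\Omega$ while preserving both the Hardy singularity at the origin and the Dirichlet boundary decay requires care. The Brezis-Kato type bootstrap of Cao-Peng \cite{cp} that underlies Lemma \ref{lema.1-1} should provide the needed tools, and the present lemma is essentially its quantitative, weighted version; the only additional work is to track norms through that argument and to fold the Hardy-weighted piece into the bound via \eqref{1.2.2}.
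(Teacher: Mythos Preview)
Your approach is genuinely different from the paper's, and it has a real gap beyond the one you flag.

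The paper does \emph{not} go through the Green function. It argues directly by a Moser-type energy estimate on the $s$-harmonic extension: with $q=p/2^*_s$, one tests the extended equation against $\bar w\,\bar w_L^{2(q-1)}$ (truncated at level $L$) when $q\ge 1$, and against $(\bar w+\theta)^{2q-1}\bar\xi^2$ with a boundary cutoff when $q\in(\tfrac12,1)$. The Hardy term is absorbed on the left using \eqref{1.2.2} together with the algebraic inequality
\[
\int_{\mathcal D} t^{1-2s}|\nabla(\bar\xi\,\bar w\,\bar w_L^{q-1})|^2
\;\le\;\frac{q^2}{2q-1}\int_{\mathcal D} t^{1-2s}\nabla\bar w\cdot\nabla(\bar\xi^2\bar w\,\bar w_L^{2(q-1)}),
\]
and the precise threshold in the statement is exactly the condition $\tfrac{2q-1}{q^2}>\mu/\bar\mu$. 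H\"older on the right-hand side then closes the estimate and simultaneously gives the weighted piece of $\|\cdot\|_{*,p}$. This is elementary, self-contained, and uses only the Hardy inequality and the extension.

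Your proposal, by contrast, routes the $L^p$ bound through a pointwise Green-kernel estimate for $(-\Delta)^s-\mu|x|^{-2s}$ on the bounded domain $\Omega$. Even granting that such bounds exist for the \emph{spectral} fractional Laplacian used here (which is not what \cite{fa,bmp} establish), there is a second problem: your claimed equivalence $\gamma p<N \iff p<\tfrac{2^*_s\sqrt{\bar\mu}}{\sqrt{\bar\mu}-\sqrt{\bar\mu-\mu}}$ is false for $s\ne 1$. The equivalence would require $\alpha_s=\tfrac{N-2s}{2}\sqrt{1-\mu/\bar\mu}$, which holds in the local case $s=1$ (where $\Upsilon_\alpha=(N-2)^2/4-\alpha^2$ is quadratic) but not for fractional $s$, since $\Upsilon_\alpha$ is a genuinely transcendental function of $\alpha$. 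Checking only the endpoints $\mu=0$ and $\mu=\bar\mu$ is not enough. So even if the Green-function machinery were available, it would deliver a threshold different from the one in the lemma, and you would still owe a comparison of the two. The paper's energy method produces the stated threshold directly, with no such detour.
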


\begin{proof}
Let $q=\frac{p}{2^*_s}.$ Then $q>\frac{1}{2}.$

First we assume $p\geq 2^*_s.$ In this case $q\geq1.$
Let $\bar{\varphi}=\bar{w}\bar{w}_{L}^{2(q-1)},$ where $\bar{w}_{L}=\min\{\bar{w},L\}.$ Then we have
 $$
 \nabla
 \bar{\varphi}=2(q-1)\bar{w}_{L}^{2(q-1)}\nabla \bar{w}_{L}+\bar{w}_{L}^{2(q-1)}\nabla \bar{w}.
$$
Since $q>1$,
it is easy to see that $\nabla\bar{\varphi}\in L^{2}(t^{1-2s},\mathcal{D})$. Thus $\bar{\varphi}\in H^{1}_{0}(t^{1-2s},\mathcal{D})$. So, we have
\begin{equation}\label{a.1}
\int_{\mathcal{D}}t^{1-2s}\nabla\bar{w}\nabla(\bar{w}\bar{w}_{L}^{2(q-1)})dxdt
=\int_{\Omega}\frac{\mu w^2w_{L}^{2(q-1)}}{|x|^{2s}}dx+\int_{\Omega}a(x)vww_{L}^{2(q-1)}dx
\end{equation}
Letting $\bar{\eta}=\bar{w}\bar{w}_{L}^{q-1} $, from Hardy-Sobolev inequality we find
\begin{equation}\label{ha.2}
\int_{\Omega}\frac{\mu w^2w_{L}^{2(q-1)}}{|x|^{2s}}dx
\leq\frac{\mu}{\bar{\mu}}\int_{\Omega}\big|(-\Delta)^{\frac{s}{2}}(w w_{L}^{(q-1)})\big|^{2}dx=
\frac{\mu}{\bar{\mu}}\int_{\Omega}|(-\Delta)^{\frac{s}{2}}\eta|^{2}dx.
\end{equation}
Moreover, it follows from $|\nabla \bar{w}_{L}|\leq |\nabla \bar{w}|$ that
\begin{equation}\label{ha.3}
\begin{array}{ll}
\ds\int_{\Omega}|(-\Delta)^{\frac{s}{2}}\eta|^{2}dx
&=\ds\int_{\mathcal{D}}t^{1-2s}|\nabla \bar{\eta}|^2dxdt
=\ds\int_{\mathcal{D}}t^{1-2s}\big(\bar{w}_{L}^{2(q-1)}|\nabla \bar{w}|^{2}+(q^{2}-1)\bar{w}_{L}^{2(q-1)}|\nabla \bar{w}_{L}|^{2} \big)dxdt\vspace{0.2cm}\\
&\leq\ds\int_{\mathcal{D}}t^{1-2s}\Big[(q^{2}-\ds\frac{q^{2}(2q-2)}{2q-1})\bar{w}_{L}^{2(q-1)}|\nabla \bar{w}|^{2}
+\ds\frac{q^{2}(2q-2)}{2q-1}\bar{w}_{L}^{2(q-1)}|\nabla \bar{w}_{L}|^{2}\Big]dxdt\vspace{0.2cm}\\
&\leq\ds\frac{q^{2}}{2q-1}\ds\int_{\mathcal{D}}t^{1-2s}\big[\bar{w}_{L}^{2(q-1)}|\nabla \bar{w}|^{2}+(2q-2)\bar{w}_{L}^{2(q-1)}|\nabla \bar{w}_{L}|^{2}\big]dxdt\vspace{0.2cm}\\
&=\ds\frac{q^{2}}{2q-1}\ds\int_{\mathcal{D}}t^{1-2s}\nabla \bar{w}\nabla(\bar{w}\bar{w}_{L}^{2(q-1)})dxdt.
\end{array}
\end{equation}
From \eqref{a.1}-\eqref{ha.3}, we get
\begin{equation}\label{ha.4}
\Big(\frac{2q-1}{q^{2}}-\frac{\mu}{\bar{\mu}}\Big)\int_{\Omega}|(-\Delta)^{\frac{s}{2}}\eta|^{2}dx
\leq\int_{\Omega}a(x)v\varphi dx.
\end{equation}
Noting that $q<\frac{\sqrt{\bar{\mu}}}{\sqrt{\bar{\mu}}-\sqrt{\bar{\mu}-\mu}}$ implies
 $\frac{2q-1}{q^{2}}-\frac{\mu}{\bar{\mu}}\geq c_{0}>0,$
we obtain from \eqref{ha.4} that there is a $c^{'}>0$ such that
\begin{equation}\label{a.2}
\int_{\Omega}a(x)v\varphi dx\geq c_{0}\int_{\Omega}|(-\Delta)^{\frac{s}{2}}\eta|^{2}dx
\geq c'\Big(\int_{\Omega}|\eta|^{2^*_s}dx\Big)^{\frac{2}{2^*_s}}.
\end{equation}

On the other hand, by H\"{o}lder inequality we have
\begin{equation}\label{a.3}
\begin{array}{ll}
\ds\int_{\Omega}a(x)v\varphi dx&\leq \Big(\ds\int_{\Omega}|v|^pdx\Big)^{\frac{1}{p}}\Big(\ds\int_{\Omega}|a(x)\varphi|^{\frac{2^*_sq}{2^*_sq-1}}dx\Big)^{\frac{2^*_sq-1}{2^*_sq}}\vspace{0.2cm}\\
&\leq \|v\|_p\|a\|_{\frac{N}{2s}}\Big(\ds\int_{\Omega}|\varphi|^{\frac{2^*_sq}{2q-1}}dx\Big)^{\frac{2q-1}{2^*_sq}}\vspace{0.2cm}\\
&\leq \|v\|_p\|a\|_{\frac{N}{2s}}\Big(\ds\int_{\Omega}|\eta|^{2^*_s}dx\Big)^{\frac{2q-1}{2^*_sq}},\vspace{0.2cm}\\
\end{array}
\end{equation}
since $\frac{q}{2q-1}\leq 1.$

Thus,
\begin{equation}\label{ha.7}
c_{0}\Big(\ds\int_{\Omega}|\eta|^{2^*_s}dx\Big)^{\frac{1}{2^{*}_sq}}
\leq  \|v\|_p\|a\|_{\frac{N}{2s}}.
\end{equation}
From \eqref{ha.2}, \eqref{ha.4}, \eqref{a.3} and \eqref{ha.7}, we obtain
\begin{equation}\label{ha.8}
\begin{array}{ll}
\ds\int_{\Omega}\ds\frac{\mu}{|x|^{2s}}w^{2}w_{L}^{2(q-1)}dx\leq C\big( \|v\|_p\|a\|_{\frac{N}{2s}}\big)^{2q}.
\end{array}
\end{equation}
Letting $L\rightarrow\infty$ in \eqref{ha.7} and \eqref{ha.8}, we obtain the result.

Now we consider the case $q\in(\frac{1}{2},1).$ In this case,
$ww_{L}^{2(q-1)}$ may not be in $H^{s}_{0}(\Omega)$. Hence we
have to deal with it differently.

By the comparison principle, we know that $w\geq 0$ in $\Omega.$ For
any $\theta>0$ being a small number, let
$\bar{\eta}=(\bar{w}+\theta)^{2q-1}\bar{\xi}^{2},$  where $\bar{\xi}\geq0$
is a function satisfying $\bar{\xi}=0$ on $\partial\Omega\times[0,\infty)$; 
$\bar{\xi}=1$ in
$\mathcal{D}_\theta:=\Omega_{\theta}\times[0,1)=\{x:x\in\Omega,d(x,\partial\Omega)\geq\theta^s\}\times[0,1)$; $0<\bar{\xi}<1$ on
$\Omega\setminus\Omega_\theta\times[0,1)$; $\bar{\xi}=0$ in
$\Omega\times[1,\infty)$
and $|\nabla\bar{\xi}|\leq\frac{2}{\theta^s}$. Then $\bar{\eta}\in H^{1}_{0}(t^{1-2s},\mathcal{D})$ and
$$
\nabla\bar{\eta}=(\bar{w}+\theta)^{2q-1}\nabla
\bar{\xi}^{2}+(2q-1)(\bar{w}+\theta)^{2(q-1)}\bar{\xi}^{2}\nabla \bar{w}.
$$
Moreover, from the assumption on $\bar{\xi}$, $\xi$ satisfies $\xi\geq0$, $\xi=0$ on $\partial \Omega$,
$\xi>0$ in $\Omega$ and $\xi=1$ in $\Omega_\theta$ and $|\nabla\xi|\leq\frac{2}{\theta^s}$.
So, we have
\begin{equation}\label{a.5}
\int_{\mathcal{D}}t^{1-2s}\nabla \bar{w} \nabla\bar{\eta}
dxdt=\int_{\Om}\mu\frac{w(w+\theta)^{2q-1}\xi^{2}}{|x|^{2s}}dx+\int_{\Omega}a(x)v(w+\theta)^{2q-1}\xi^{2}dx.
\end{equation}
On the other hand,
\begin{equation}\label{ha.10}
\begin{array}{ll}
&\ds\int_{\mathcal{D}}t^{1-2s}\nabla \bar{w} \nabla\bar{\eta}dxdt\vspace{0.2cm}\\
&=(2q-1)\ds\int_{\mathcal{D}}t^{1-2s}\bar{\xi}^{2}(\bar{w}+\theta)^{2(q-1)}|\nabla(\bar{w}+\theta)|^{2}dxdt
+\ds\int_{\mathcal{D}}t^{1-2s}(\bar{w}+\theta)^{2q-1}\nabla(\bar{w}+\theta)\nabla\bar{\xi}^{2}dxdt\vspace{0.2cm}\\
&=\ds\frac{2q-1}{q^{2}}\ds\int_{\mathcal{D}}t^{1-2s}\bar{\xi}^{2}|\nabla(\bar{w}+\theta)^{q}|^{2}dxdt
+\ds\int_{\mathcal{D}}t^{1-2s}(\bar{w}+\theta)^{2q-1}\nabla(\bar{w}+\theta)\nabla\bar{\xi}
^{2}dxdt\vspace{0.2cm}\\
&=\ds\frac{2q-1}{q^{2}}\ds\int_{\mathcal{D}}t^{1-2s}|\nabla(\bar{\xi}(\bar{w}+\theta))^{q}|^{2}dxdt
-\ds\frac{2(2q-1)}{q^{2}}\ds\int_{\mathcal{D}}t^{1-2s}q(\bar{w}+\theta)^{2q-1}\bar{\xi}\nabla\bar{\xi}\nabla(\bar{w}+\theta)dxdt
\vspace{0.2cm}\\
&\,\,\,\,\,-\ds\frac{2q-1}{q^{2}}\ds\int_{\mathcal{D}}t^{1-2s}(\bar{w}+\theta)^{2q}|\nabla\bar{\xi}|^{2}dxdt
+\ds\ds\int_{\mathcal{D}}t^{1-2s}(\bar{w}+\theta)^{2q-1}\nabla(\bar{w}+\theta)\nabla \bar{\xi}^{2}dxdt.
\end{array}
\end{equation}

From $a,v\in
C^{2s}(\Omega\backslash B_{\delta}(0))$ for any $\delta>0$ small,
it follows from \cite{ro} that $w\in C^{\beta}(\Omega\backslash B_{\delta}(0))$ for any $\beta\in[s,1+2s)$ and

$$w(x)\leq Cd^s(x,\partial\Omega)\leq C\theta^s, |\nabla w|\leq C, \forall x\in \Omega\backslash \Omega_\theta.$$

As a consequence, \eqref{ha.10} becomes
\begin{equation}\label{ha.11}
\begin{array}{ll}
&\ds\int_{\mathcal{D}}t^{1-2s}\nabla \bar{w} \nabla\bar{\eta}dxdt\vspace{0.2cm}\\
&=\ds\frac{2q-1}{q^{2}}\ds\int_{\mathcal{D}}t^{1-2s}|\nabla(\bar{\xi}(\bar{w}+\theta))^{q}|^{2}dxdt
+O\Big(\ds\int_{(\Omega\setminus \Omega_{\theta})\times [0,1)}t^{1-2s}(\theta^s)^{2q-1}dxdt\Big)\vspace{0.2cm}\\
&=\ds\frac{2q-1}{q^{2}}\ds\int_{\Omega}|(-\Delta)^{\frac{s}{2}}(\xi(w+\theta))^{q}|^{2}dxdt
+O\big((\theta^s)^{2q-1}\big).
\end{array}
\end{equation}

By \eqref{a.5} and \eqref{ha.11}, we get
\begin{equation}\label{ha.12}
\begin{array}{ll}
&\ds\frac{2q-1}{q^{2}}\ds\int_{\Omega}|(-\Delta)^{\frac{s}{2}}(\xi(w+\theta))^{q}|^{2}dx
+O\big((\theta^s)^{2q-1}\big)-\ds\int_{\Omega}\mu\frac{w(w+\theta)^{2q-1}\xi^{2}}{|x|^{2s}}dx\vspace{0.2cm}\\
&=\ds\int_{\Omega}a(x)v(w+\theta)^{2q-1}\xi^{2}dx.
\end{array}
\end{equation}
But,
\begin{equation}\label{ha.13}
\begin{array}{ll}
\mu\ds\int_{\Omega}\frac{w(w+\theta)^{2q-1}\xi^{2}}{|x|^{2s}}dx
\leq \mu\ds\int_{\Omega}\frac{(w+\theta)^{2q}\xi^{2}}{|x|^{2s}}
dx \leq \frac{\mu}{\bar{\mu}} \ds\int_{\Omega}|(-\Delta)^{\frac{s}{2}}(\xi(w+\theta)^{q})|^{2}dx.
\end{array}
\end{equation}
From the assumptions on $q$ and $\mu$, \eqref{ha.12} and \eqref{ha.13}, we can deduce
\begin{equation}\label{ha.14}
\begin{array}{ll}
C'\Big(\ds\int_{\Omega}(\xi(w+\theta)^{q})^{2^{*}_s}dx\Big)^{\frac{2}{2^{*}_s}}
+O\big((\theta^s)^{2q-1}\big)
\leq\ds\int_{\Omega}a(x)v(w+\theta)^{2q-1}\xi^{2}dx,
\end{array}
\end{equation}
and
\begin{equation}\label{ha.15}
\begin{array}{ll}
C'\ds\int_{\Omega}\mu\frac{(w+\theta)^{2q}\xi^{2}}{|x|^{2s}}
dx
+O\big((\theta^s)^{2q-1}\big)
\leq\ds\int_{\Omega}a(x)v(w+\theta)^{2q-1}\xi^{2}dx.
\end{array}
\end{equation}
Letting $\theta\rightarrow0$ in \eqref{ha.14} and \eqref{ha.15}, we find
\begin{eqnarray*}
C'\Big(\ds\int_{\Omega}w^{q2^{*}_s}dx \Big)^{\frac{2}{2^{*}_s}}
\leq\ds\int_{\Omega}a(x)vw^{2q-1}dx
\leq ||a||_{\frac{N}{2s}}||v||_{p}||w||^{2q-1}_{p},
\end{eqnarray*}
and
\begin{eqnarray*}
C'\ds\int_{\Omega}\mu\frac{w^{2q}}{|x|^{2s}}
dx
\leq\ds\int_{\Omega}a(x)vw^{2q-1}dx
\leq ||a||_{\frac{N}{2s}}||v||_{p}||w||^{2q-1}_{p}.
\end{eqnarray*}
Therefore, the result follows.
\end{proof}

\begin{lem}\label{lema.2}
Let $w$ be a solution of
$$
\left\{%
\begin{array}{ll}
    (-\Delta)^s w-\ds\frac{\mu w}{|x|^{2s}}=f(x), & \hbox{$\text{in}~ \Omega$},\vspace{0.1cm} \\
   w=0,\,\, &\hbox{$\text{on}~\partial \Omega$}. \\
\end{array}%
\right.
$$

Suppose that $f\in C^{s}(\Omega\backslash B_{\delta}(0))$ for any small $\delta>0$.
Then for any
$\frac{N}{2s}>p\geq1$ and $\mu$ with $\frac{Np}{N-2sp}<\frac{2^*_s\sqrt{\bar{\mu}}}{\sqrt{\bar{\mu}}-\sqrt{\bar{\mu}-\mu}},$
there is a constant
$C=C(p)$ such that
$$
||w||_{*,\frac{Np}{N-2sp}}\leq C||f||_p.
$$
\end{lem}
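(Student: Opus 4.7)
My plan is to follow the blueprint of Lemma~\ref{lema.1} closely, since the equation here has the same left-hand side and the only change is that the source is $f(x)$ rather than $a(x)v$. Set $q=\frac{Np}{(N-2sp)\,2^{*}_s}$, so that the target exponent is $q\cdot 2^{*}_s=\frac{Np}{N-2sp}$. The assumption $\frac{Np}{N-2sp}<\frac{2^{*}_s\sqrt{\bar\mu}}{\sqrt{\bar\mu}-\sqrt{\bar\mu-\mu}}$ is exactly $q<\frac{\sqrt{\bar\mu}}{\sqrt{\bar\mu}-\sqrt{\bar\mu-\mu}}$, which is equivalent to $\frac{2q-1}{q^{2}}>\frac{\mu}{\bar\mu}$. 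The regularity hypothesis $f\in C^{s}(\Omega\setminus B_\delta(0))$ passes through boundary regularity results (e.g.\ \cite{ro}) to ensure that $w\in C^\beta(\Omega\setminus B_\delta(0))$ with $w\lesssim d^{s}(x,\partial\Omega)$ away from the origin, which is what makes the test functions below admissible.

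For the case $q\ge 1$ (i.e.\ $p\ge\frac{2N}{N+2s}$), I would test the extension equation with $\bar\varphi=\bar w\,\bar w_L^{2(q-1)}$, $\bar w_L=\min(\bar w,L)$, repeating verbatim the chain of identities \eqref{a.1}--\eqref{ha.3}. This gives the key inequality
\[
\Big(\tfrac{2q-1}{q^{2}}-\tfrac{\mu}{\bar\mu}\Big)\int_{\Omega}\big|(-\Delta)^{\frac{s}{2}}(w\,w_L^{q-1})\big|^{2}dx
\;\le\;\int_{\Omega}f\,w\,w_L^{2(q-1)}dx.
\]
Combine fractional Sobolev on the left with H\"older on the right (exponent $p$ vs.\ $p/(p-1)$) and let $L\to\infty$ to obtain $\|w^{q}\|_{2^{*}_s}^{2}\le C\|f\|_p\,\|w^{2q-1}\|_{p/(p-1)}$. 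A short arithmetic check shows $(2q-1)\cdot\frac{p}{p-1}=q\cdot 2^{*}_s$, so the right side becomes $C\|f\|_p\,\|w\|_{q2^{*}_s}^{2q-1}$. Cancelling $\|w\|_{q2^{*}_s}^{2q-1}$ on both sides yields $\|w\|_{q2^{*}_s}\le C\|f\|_p$. The same test function controls the Hardy term: from $\mu\int\frac{w^{2q}}{|x|^{2s}}\le C\|f\|_p\|w\|_{q2^{*}_s}^{2q-1}\le C\|f\|_p^{2q}$ one reads off $\big(\mu\int\frac{|w|^{2q}}{|x|^{2s}}\big)^{1/(2q)}\le C\|f\|_p$, which is exactly the second piece of $\|w\|_{*,q2^{*}_s}$.

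For the case $\tfrac12<q<1$ (i.e.\ $1\le p<\frac{2N}{N+2s}$) the test function $w\,w_L^{2(q-1)}$ is no longer in $H^{s}_0(\Omega)$, so I would switch to $\bar\eta=(\bar w+\theta)^{2q-1}\bar\xi^{2}$ with $\theta>0$ and the boundary cutoff $\bar\xi$ from the proof of Lemma~\ref{lema.1}. Expanding $\int_{\mathcal{D}}t^{1-2s}\nabla\bar w\nabla\bar\eta$ as in \eqref{ha.10}--\eqref{ha.11}, the cross terms produced by $\nabla\bar\xi$ contribute $O\big((\theta^{s})^{2q-1}\big)$ thanks to the boundary regularity $w\le C\theta^{s}$, $|\nabla w|\le C$ on $\Omega\setminus\Omega_\theta$. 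Absorbing the Hardy term via \eqref{ha.13} and letting $\theta\to 0$, I arrive at
\[
C'\Big(\int_\Omega w^{q2^{*}_s}dx\Big)^{2/2^{*}_s}\;\le\;\int_\Omega f\,w^{2q-1}dx,
\qquad
C'\mu\int_\Omega\frac{w^{2q}}{|x|^{2s}}dx\;\le\;\int_\Omega f\,w^{2q-1}dx,
\]
after which H\"older ($f\in L^{p}$, $w^{2q-1}\in L^{p/(p-1)}$ with the same algebraic identity $(2q-1)\frac{p}{p-1}=q2^{*}_s$) closes both inequalities just as in the first case.

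The main obstacle, as in Lemma~\ref{lema.1}, is the sub-range $q<1$: one needs the cutoff $\bar\xi$ to keep $\bar\eta$ admissible near $\partial\Omega$ while simultaneously controlling the error terms produced by $\nabla\bar\xi$ and by the perturbation $\theta$. This requires the boundary regularity of $w$ (via \cite{ro}) away from the origin; the singularity at $\{x=0\}$ causes no trouble because the Hardy term is exactly what we are absorbing, and the hypothesis on $\mu$ guarantees a strictly positive absorption constant $\frac{2q-1}{q^{2}}-\frac{\mu}{\bar\mu}$. Once these technicalities are handled, the remaining steps are routine applications of Sobolev and H\"older.
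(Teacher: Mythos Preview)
Your proposal is correct and follows essentially the same approach as the paper: set $q$ so that $q\,2^{*}_s=\frac{Np}{N-2sp}$, repeat the test-function argument from Lemma~\ref{lema.1} (splitting into the cases $q\ge 1$ and $\tfrac12<q<1$), and close with H\"older using the algebraic identity $(2q-1)\frac{p}{p-1}=q\,2^{*}_s$. The paper's own proof is simply a terse version of exactly this, writing ``similar to the proof of Lemma~\ref{lema.1}'' and then recording the same choice of $q$ and the same H\"older step.
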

\begin{proof}
Similar to the proof of Lemma \ref{lema.1}, we can deduce if
$q>\frac{1}{2}$ with $q<\frac{\sqrt{\bar{\mu}}}{\sqrt{\bar{\mu}}-\sqrt{\bar{\mu}-\mu}},$ then
\begin{eqnarray*}
C'||w||^{2q}_{2^{*}_sq}\leq\ds\int_{\Omega}f(x)w^{2q-1}dx,
\end{eqnarray*}
and
\begin{eqnarray*}
C'\int_{\Omega}\frac{\mu|w|^{2q}}{|x|^{2s}}dx\leq\ds\int_{\Omega}f(x)w^{2q-1}dx.
\end{eqnarray*}
For any $\frac{N}{2s}>p\geq1,$ see $2p-2^{*}_s(p-1)>0.$ Let $q=\frac{p}{2p-2^{*}_s(p-1)}>\frac{1}{2}.$
Then
\begin{equation}\label{a.7}
2^{*}_sq=\frac{2^{*}_s p}{2p-2^{*}_s(p-1)}=\frac{Np}{N-2sp}
\,\,\,\text{and}\,\,\,\,(2q-1)\frac{p}{p-1}=2^{*}_sq.
\end{equation}
So,
\begin{eqnarray*}
\Big|\ds\int_{\Omega}f(x)w^{2q-1}dx\Big|\leq ||f||_{p}
\Big(\int_{\Omega}\|w\|^{\frac{(2q-1)p}{p-1}}dx\Big)^{1-\frac{1}{p}}
\leq ||f||_{p}\|w\|_{2^{*}_sq}^{2q-1}.
\end{eqnarray*}
As a result,
$$
\|w\|_{2^{*}_sq}\leq C ||f||_{p},
$$
and
$$
\mu\int_{\Omega}\frac{|w|^{2q}}{|x|^{2s}}dx\leq C ||f||_{p}^{2q}.
$$

So the result follows.
\end{proof}

\begin{lem}\label{lema.3}
Let $w\geq 0$ be a solution of
$$
\left\{%
\begin{array}{ll}
    (-\Delta)^s w-\ds\frac{\mu w}{|x|^{2s}}=a(x)v, & \hbox{$\text{in}~ \Omega$},\vspace{0.1cm} \\
   w=0,\,\, &\hbox{$\text{on}~\partial \Omega$}, \\
\end{array}%
\right.
$$
where $a(x)\geq0$ and $v\geq0$ are functions satisfying $a,v\in
C^{2s}(\Omega\backslash B_{\delta}(0))$ for any small $\delta>0$. Then for any $2^{*}_s>p_{2}>\frac{N}{N-2s},$
there is a constant $C=C(p_{2})$ such that
$$
||w||_{*,p_{2}}\leq C||a||_{r}||v||_{2^{*}_s},
$$
where $r$ is determined by
$\frac{1}{r}=\frac{1}{p_{2}}+\frac{2s}{N}-\frac{1}{2^{*}_s}.$
\end{lem}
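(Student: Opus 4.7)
The plan is to reduce to Lemma \ref{lema.2} by treating the right-hand side as a single forcing term $f(x):=a(x)v(x)$ and splitting it via H\"older's inequality. Set
$$\frac{1}{q}=\frac{1}{r}+\frac{1}{2^*_s}=\frac{1}{p_2}+\frac{2s}{N},$$
so that $q=\frac{Np_2}{N+2sp_2}$. This choice is engineered so that the Sobolev-type target exponent produced by Lemma \ref{lema.2} matches $p_2$: a direct computation gives $\frac{Nq}{N-2sq}=p_2$.

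Next I would verify compatibility with Lemma \ref{lema.2}. The inequality $p_2>\frac{N}{N-2s}$ yields $q\ge 1$, while $q<\frac{N}{2s}$ holds trivially from the explicit formula for $q$. The spectral constraint of Lemma \ref{lema.2}, namely $\frac{Nq}{N-2sq}=p_2<\frac{2^*_s\sqrt{\bar{\mu}}}{\sqrt{\bar{\mu}}-\sqrt{\bar{\mu}-\mu}}$, is automatic because $p_2<2^*_s$ and the standing hypothesis on $\mu$ in Theorem \ref{thm1.1} ensures $\frac{2^*_s\sqrt{\bar{\mu}}}{\sqrt{\bar{\mu}}-\sqrt{\bar{\mu}-\mu}}>\frac{2N}{N-6s}>2^*_s$. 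The regularity assumption $a,v\in C^{2s}(\Omega\setminus B_\delta(0))$ passes to the product $av$, so Lemma \ref{lema.2} applies and yields
$$\|w\|_{*,p_2}\le C\|av\|_q.$$

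Finally, H\"older's inequality with exponents $r/q$ and $2^*_s/q$, whose reciprocals sum to $1$ by the construction of $q$, gives
$$\|av\|_q\le \|a\|_r\|v\|_{2^*_s},$$
and combining the two inequalities finishes the proof. There is no genuine analytical obstacle here: the Hardy-term contribution has already been absorbed into Lemma \ref{lema.2} through the $\|\cdot\|_{*,p}$ norm, and the only real work is the bookkeeping of Sobolev-Lebesgue exponents described above, together with verifying that the assumption $p_2<2^*_s$ keeps one inside the admissible range for the spectral inequality.
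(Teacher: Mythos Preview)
Your proof is correct but follows a different route from the paper. The paper argues directly in the style of Lemma~\ref{lema.1}: setting $q=\tfrac{2p_2}{2^*_s}$ (so that $\tfrac{q}{2}\in(\tfrac12,1)$), it tests the equation against $w^{q-1}$, uses the Hardy--Sobolev inequality to control the singular term, and then splits $\int a\,v\,w^{q-1}$ by a three-factor H\"older inequality with exponents $2^*_s$, $r$, and $\tfrac{tr}{r-t}$ (where $t=\tfrac{2N}{N+2s}$), checking afterwards that $\tfrac{(q-1)tr}{r-t}=p_2$ so that the $w$-factor can be absorbed on the left. Your approach is more modular: you treat $f=av$ as a single forcing term, invoke Lemma~\ref{lema.2} as a black box with $q=\tfrac{Np_2}{N+2sp_2}$, and then apply a two-factor H\"older inequality. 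This is shorter and avoids redoing the Hardy--Sobolev estimate, at the cost of a small extra verification (that the product $av$ inherits the regularity hypothesis of Lemma~\ref{lema.2}); the paper's direct argument is more self-contained but essentially reproves a special case of Lemma~\ref{lema.2} inside the proof.
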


\begin{proof}
Let $q=\frac{2p_{2}}{2^{*}_s}.$ Since $p_{2}\in\big(\frac{N}{N-2s},2^{*}_s\big),$ we
see $\frac{q}{2}\in(\frac{1}{2},1).$  Let $t=\frac{2N}{N+2s}$. Similar to the proof of Lemma \ref{lema.1}, we obtain
\begin{equation}\label{a.8}
\begin{array}{ll}
C||w||^{q}_{p_{2}}&=C||w||^{q}_{\frac{2^{*}_sq}{2}}
\leq\ds\int_{\Omega}a(x)vw^{q-1}dx
\leq ||v||_{2^{*}_s}||a||_{r}\Big(\ds\int_{\Omega}|w|^{\frac{(q-1)tr}{r-t}}dx\Big)^{\frac{r-t}{rt}},
\end{array}
\end{equation}
and
$$C\int_{\Omega}\frac{\mu|w|^{q}}{|x|^{2s}}dx\leq ||v||_{2^{*}_s}||a||_{r}\Big(\ds\int_{\Omega}|w|^{\frac{(q-1)tr}{r-t}}dx\Big)^{\frac{r-t}{rt}}.$$
By the definition, choose
$$
\frac{(q-1)tr}{r-t}=\frac{\frac{2p_{2}}{2^{*}_s}-1}{\frac{1}{t}-\frac{1}{r}}=p_{2}.
$$
So,
$$
||w||^{q}_{p_{2}}\leq C||v||_{2^{*}_s}
||a||_{r}||w||_{p_{2}}^{q-1}.
$$
Moreover, it is easy to check
$$
\frac{1}{p_{2}}=\frac{1}{r}+\frac{1}{2^{*}_s}-\frac{2s}{N}.
$$

Therefore, the result follows.
\end{proof}

\begin{lem}\label{lema.4}
Let $w\geq 0$ be a weak solution of
$$
    (-\Delta)^s w-\ds\frac{\mu w}{|x|^{2s}}=a(x)w\,\,\,\, \text{in}~ \R^{N},
$$
where $a(x)\geq0$. Suppose that there is a small
constant $\delta>0$ such that
$\int_{B_{1}(\bar{x})}|a|^{\frac{N}{2s}}dx\leq\delta,$
then, for any $p>\max \{2^{*}_s,2^{\sharp}\}$ and $0\leq\mu<\bar{\mu}$ satisfying $p<\min\{\frac{2^*_s\sqrt{\bar{\mu}}}{\sqrt{\bar{\mu}}-\sqrt{\bar{\mu}-\mu}},
\frac{2^{\sharp}\sqrt{\bar{\mu}}}{\sqrt{\bar{\mu}}-\sqrt{\bar{\mu}-\mu}}\}$ there is a constant $C=C(p)$ such that
$$
||\bar{w}||_{L^p(t^{1-2s},B^{N+1}_{\frac{1}{2}}(\bar{x}))}+
||w||_{L^p(B^N_{\frac{1}{2}}(\bar{x}))}\leq C||\bar{w}||_{L^\gamma(t^{1-2s},B^{N+1}_{1}(\bar{x}))},
$$
where $\gamma<\min\{2^*_s,2^{\sharp}\}$ and $2^{\sharp}=\frac{2(N+1)}{N}$.
\end{lem}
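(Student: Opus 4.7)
The plan is to run a Moser-type iteration directly on the $s$-harmonic extension $\bar w$ in $\mathbb{R}^{N+1}_+$, using the same Hardy-based test-function trick as in the proof of Lemma~\ref{lema.1} to control the singular potential, and the smallness of $\|a\|_{L^{N/(2s)}(B^N_1(\bar x))}\leq\delta$ to absorb the $a(x)w$ term into the energy.

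For the single iteration step, fix $\beta$ with $1\leq\beta<\sqrt{\bar\mu}/(\sqrt{\bar\mu}-\sqrt{\bar\mu-\mu})$, a pair of nested radii $\frac{1}{2}\leq r<R\leq 1$, and a cutoff $\bar\eta\in C^\infty_c(B^{N+1}_R(\bar x))$ with $\bar\eta\equiv 1$ on $B^{N+1}_r(\bar x)$ and $|\nabla\bar\eta|\leq C/(R-r)$. Testing the weak form of the extension problem against $\bar\eta^2\bar w_L^{2\beta-1}$ with $\bar w_L=\min\{\bar w,L\}$, carrying out the same algebra as in Lemma~\ref{lema.1} (the inequality $|\nabla\bar w_L|\leq|\nabla\bar w|$ and Young's inequality on the cutoff cross terms), and bounding the Hardy boundary term via \eqref{1.2.2} together with the energy-minimising property of the $s$-harmonic extension of $\eta w_L^\beta$, one arrives at
\begin{equation*}
\Bigl(\tfrac{2\beta-1}{\beta^2}-\tfrac{\mu}{\bar\mu}\Bigr)\int_{\mathbb{R}^{N+1}_+}t^{1-2s}\bigl|\nabla(\bar\eta\bar w_L^\beta)\bigr|^2 dxdt\leq C\int_{\mathbb{R}^N}a(x)(\eta w_L^\beta)^2 dx+\tfrac{C}{(R-r)^2}\int_{B^{N+1}_R(\bar x)}t^{1-2s}\bar w_L^{2\beta}dxdt.
\end{equation*}
The constraint on $\beta$ makes the coefficient on the left strictly positive. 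H\"older and the trace Sobolev inequality \eqref{1.9} give $\int a(x)(\eta w_L^\beta)^2 dx\leq C\delta\int t^{1-2s}|\nabla(\bar\eta\bar w_L^\beta)|^2 dxdt$, which is absorbed once $\delta$ is small. Combining with the weighted Sobolev inequality \eqref{3.1.1.1} and the trace Sobolev inequality \eqref{1.9}, then letting $L\to\infty$ by monotone convergence, we obtain the reverse-H\"older step
\begin{equation*}
\|w\|_{L^{2^*_s\beta}(B^N_r)}^\beta+\|\bar w\|_{L^{2^\sharp\beta}(t^{1-2s},B^{N+1}_r)}^\beta\leq\tfrac{C}{R-r}\|\bar w\|_{L^{2\beta}(t^{1-2s},B^{N+1}_R)}^\beta.
\end{equation*}

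Set $q^*:=\min\{2^*_s,2^\sharp\}>2$. Given $p$ in the stated range, choose a finite telescoping sequence $\beta_0=\gamma/2<\beta_1<\cdots<\beta_k$ with $\beta_{j+1}=(q^*/2)\beta_j$ and $q^*\beta_k\geq p$. The hypothesis $p<q^*\sqrt{\bar\mu}/(\sqrt{\bar\mu}-\sqrt{\bar\mu-\mu})$ forces $\beta_k<\sqrt{\bar\mu}/(\sqrt{\bar\mu}-\sqrt{\bar\mu-\mu})$, so the single-step estimate applies at every level, and $\gamma<q^*$ makes the starting exponent admissible. Applying the step along the geometric radii $r_j=\frac12+2^{-j-1}$ telescopes (the constants compound as $\prod_j(C\cdot 2^j)^{1/\beta_j}$, which is finite because the sums $\sum 1/\beta_j$ and $\sum j/\beta_j$ converge geometrically) to
$\|\bar w\|_{L^{q^*\beta_k}(t^{1-2s},B^{N+1}_{1/2})}+\|w\|_{L^{q^*\beta_k}(B^N_{1/2})}\leq C\|\bar w\|_{L^\gamma(t^{1-2s},B^{N+1}_1(\bar x))}$, and a final H\"older on the ball of radius $\frac12$ upgrades the $L^{q^*\beta_k}$ bound to the claimed $L^p$ bound on both the trace and the interior.

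The main obstacle is the Hardy contribution: keeping $(2\beta-1)/\beta^2>\mu/\bar\mu$ positive at every iteration level is what constrains the iteration, and this is exactly what limits the admissible exponents to $p<q^*\sqrt{\bar\mu}/(\sqrt{\bar\mu}-\sqrt{\bar\mu-\mu})$. As in Lemma~\ref{lema.1}, the truncation $\bar w_L$ is essential because the would-be test function $\bar w^{2\beta-1}$ need not lie in $H^1(t^{1-2s},\mathcal{D})$ when $2\beta-1<1$; the same two-case split (depending on whether $\beta\geq 1$ or $\beta\in(\tfrac12,1)$) as in Lemma~\ref{lema.1} is used to validate the test-function computation throughout the iteration.
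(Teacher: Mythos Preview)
Your approach is close to the paper's and correct in spirit, but there is one genuine gap. You iterate directly from $\beta_0=\gamma/2$ up to $\beta_k$, relying at every step on the positivity of $(2\beta-1)/\beta^2-\mu/\bar\mu$. You correctly note that this forces the upper bound $\beta_k<\beta_+:=\sqrt{\bar\mu}/(\sqrt{\bar\mu}-\sqrt{\bar\mu-\mu})$, but you overlook the \emph{lower} root: the coefficient is positive only on the interval $(\beta_-,\beta_+)$ with $\beta_-:=\sqrt{\bar\mu}/(\sqrt{\bar\mu}+\sqrt{\bar\mu-\mu})\in(\tfrac12,1)$ whenever $\mu>0$. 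The lemma allows any $\gamma<\min\{2^*_s,2^\sharp\}$ with no lower bound, and in the paper's actual application $\gamma$ is taken in $(1,\tfrac{2N+2}{2N+1})$, so $\gamma/2$ is barely above $\tfrac12$. For such $\gamma$ and $\mu$ not small one has $\beta_0=\gamma/2<\beta_-$, the left-hand coefficient in your displayed inequality is nonpositive, and nothing can be absorbed; the first iteration step collapses. Your appeal to the $\beta\in(\tfrac12,1)$ case of Lemma~\ref{lema.1} does not rescue this, since that case also needs $(2q-1)/q^2>\mu/\bar\mu$.

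The paper avoids the issue by running the Moser iteration only \emph{upward from $2^\sharp$}: it takes $q=\chi^i$ with $\chi=2^\sharp/2=(N+1)/N>1>\beta_-$, so the Hardy coefficient is positive at every level, and obtains $\|\bar w\|_{L^p(t^{1-2s},B^{N+1}_r)}\le C(R-r)^{-\sigma}\|\bar w\|_{L^{2^\sharp}(t^{1-2s},B^{N+1}_R)}$. The descent from $L^{2^\sharp}$ to $L^\gamma$ is then done by interpolation (Young's inequality) combined with the standard absorption iteration on nested balls, a step that involves no Hardy term whatsoever and therefore imposes no lower restriction on $\gamma$. To fix your argument, either restrict to $\gamma>2\beta_-$ (which would not cover the use made of the lemma in Section~3), or keep your iteration for $\beta\ge1$ and replace the sub-unit steps by the paper's interpolation-and-absorption device.
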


\begin{proof}
Let $1\geq R>r>0$.
Take $\bar{\xi}\in C^{2}_{0}(B^{N+1}_R(\bar{x})),$ with
$\bar{\xi}=1$ in $B^{N+1}_r(\bar{x})$, $0\leq\bar{\xi}\leq1,$ and
$|\nabla\bar{\xi}|\leq\frac{C}{R-r}.$ Let
$q=\frac{p}{2^*_s}, \bar{\eta}=\bar{\xi}^{2}\bar{w}\bar{w}_{L}^{2(q-1)}.$ We have
$$
\int_{\R^{N+1}_{+}}t^{1-2s}\nabla \bar{w}\nabla \bar{\eta}dxdt -\int_{\R^{N}}\ds\frac{\mu }{|x|^{2s}}w\eta dx\leq
\int_{\R^{N}}a(x)w\eta dx.
$$

Firstly, by H\"{o}lder inequality, we have
\begin{equation}\label{a.9}
\begin{array}{ll}
\ds\int_{\R^{N}}a(x)w\eta dx&\leq
\Bigl(\ds\int_{B_{1}(\bar{x})}|a(x)|^{\frac{N}{2s}}dx\Bigl)^{\frac{2s}{N}}
\Bigl(\ds\int_{B_{R}(\bar{x})}(\xi
ww_{L}^{q-1})^{2^{*}_s}dx\Bigl)^{\frac{2}{2^{*}_s}}\vspace{0.2cm}\\
&\leq C\delta\ds\int_{\R^{N+1}_{+}}t^{1-2s}|\nabla(\bar{\xi} \bar{w}\bar {w}_{L}^{q-1})|^2dxdt.
\end{array}
\end{equation}

On the other hand, similar to the proof of Lemma \ref{lema.1}, by Hardy-Sobolev inequality we can deduce that if $q<\frac{\sqrt{\bar{\mu}}}{\sqrt{\bar{\mu}}-\sqrt{\bar{\mu}-\mu}},$ then
\begin{equation}\label{a.10}
\begin{array}{ll}
&\ds\int_{\R^{N+1}_{+}}t^{1-2s}\nabla \bar{w}\nabla \bar{\eta}dxdt -\ds\int_{\R^{N}}\ds\frac{\mu }{|x|^{2s}}w\eta dx\vspace{0.2cm}\\
&\geq C'\ds\int_{\R^{N+1}_{+}}t^{1-2s}|\nabla(\bar{\xi}\bar{w}\bar {w}_{L}^{q-1})|^2dxdt
-C\int_{\R^{N+1}_{+}}t^{1-2s}|\nabla\bar{\xi}|^{2}(\bar{w}\bar {w}_{L}^{q-1})^{2}dxdt.
\end{array}
\end{equation}

If $\delta$ is small enough,
it follows from \eqref{a.9} and \eqref{a.10} that there exists $C>0$
such that
\begin{equation}\label{a.11}
\begin{array}{ll}
\ds\int_{\R^{N+1}_{+}}t^{1-2s}|\nabla(\bar{\xi} \bar{w}\bar {w}_{L}^{q-1})|^2dxdt
\leq C\int_{\R^{N+1}_{+}}t^{1-2s}|\nabla\bar{\xi}|^{2}(\bar{w}\bar {w}_{L}^{q-1})^{2}dxdt.
\end{array}
\end{equation}

Using the Sobolev inequality and Lemma \ref{lem3.1.1}, we obtain from \eqref{a.11} that
\begin{equation}\label{a.12}
\begin{array}{ll}
&\Bigl(\ds\int_{\R^N}(\xi w w_{L} ^{q-1})^{2^{*}_s}dx\Bigl)^{\frac{2}{2^{*}_s}}
+\Bigl(\ds\int_{\R^{N+1}_{+}}t^{1-2s}(\bar{\xi} \bar{w} \bar{w}_{L} ^{q-1})^{\frac{2(N+1)}{N}}dxdt\Bigl)^{\frac{N}{N+1}}\vspace{0.2cm}\\
&\leq C\ds\int_{\R^{N+1}_{+}}t^{1-2s}|\nabla(\bar{\xi}\bar{w}\bar {w}_{L}^{q-1})|^2dxdt
\leq C\ds\int_{\R^{N+1}_{+}}t^{1-2s}|\nabla\bar{\xi}|^{2}(\bar{w}\bar {w}_{L}^{q-1})^{2}dxdt,
\end{array}
\end{equation}
which yields
\begin{equation}\label{a.13}
\begin{array}{ll}
\Bigl(\ds\int_{B_{r}(\bar{x})}|w|^{q2^{*}_s}dx\Bigl)^{\frac{1}{q2^{*}_s}} \leq
\Bigl(\frac{C}{R-r}\Bigl)^{\frac{1}{q}}\Bigl(\ds\int_{B^{N+1}_R(\bar{x})\backslash B^{N+1}_r(\bar{x})}t^{1-2s}
|\bar{w}|^{2q}dxdt\Bigl)^{\frac{1}{2q}},
\end{array}
\end{equation}
and
\begin{equation}\label{a.131}
\begin{array}{ll}
\Bigl(\ds\int_{B^{N+1}_r(\bar{x})}t^{1-2s}|\bar{w}|^{q2^{\sharp}}dxdt\Bigl)^{\frac{1}{q2^{\sharp}}} \leq
\Bigl(\frac{C}{R-r}\Bigl)^{\frac{1}{q}}\Bigl(\ds\int_{B^{N+1}_R(\bar{x})\backslash B^{N+1}_r(\bar{x})}t^{1-2s}
|\bar{w}|^{2q}dxdt\Bigl)^{\frac{1}{2q}},
\end{array}
\end{equation}
where $2^{\sharp}=\frac{2(N+1)}{N}$.

Let $\chi=\frac{2^{\sharp}}{2}=\frac{N+1}{N}>1.$ For any
$0<r^{*}<R^{*}<1,$ define
$r_{i}=r^{*}+\frac{1}{2^{i}}(R^{*}-r^{*}),i=0,1,2,\cdot\cdot\cdot.$
Then $r_{i}-r_{i+1}=\frac{1}{2^{i+1}}(R^{*}-r^{*}).$ Taking
$R=r_{i},r=r_{i+1},q=\chi^{i}$ in \eqref{a.131}, we get
\begin{equation}\label{a.14}
\begin{array}{ll}
\Bigl(\ds\int_{B^{N+1}_{r_{i+1}}(\bar{x})}t^{1-2s}|\bar{w}|^{2\chi^{i+1}}dxdt\Bigl)^{\frac{1}{2\chi^{i+1}}} \leq
\Bigl(\frac{C2^{i+1}}{R^{*}-r^{*}}\Bigl)^{\frac{1}{\chi^{i}}}\Bigl(\ds\int_{B^{N+1}_{r_i}(\bar{x})}t^{1-2s}
|\bar{w}|^{2\chi^{i}}dxdt\Bigl)^{\frac{1}{2\chi^{i}}}.
\end{array}
\end{equation}

By iteration, for any
$0<r^{*}<R^{*}<1,$ we can obtain from \eqref{a.14}
\begin{equation}\label{a.15}
\begin{array}{ll}
\Bigl(\ds\int_{B^{N+1}_{r_{i+1}}(\bar{x})}t^{1-2s}|\bar{w}|^{2\chi^{i+1}}dxdt\Bigl)^{\frac{1}{2\chi^{i+1}}} \leq
\frac{C}{(R^{*}-r^{*})^{\sum_{j=1}^{i}\frac{1}{\chi^{j}}}}\Bigl(\ds\int_{B^{N+1}_{R^*}(\bar{x})}t^{1-2s}
|w|^{2^{\sharp}}dxdt\Bigl)^{\frac{1}{2^{\sharp}}}.
\end{array}
\end{equation}

Note that
$\ds\sum_{j=1}^{i}\frac{1}{\chi^{j}}<\ds\sum_{j=1}^{\infty}\frac{1}{\chi^{j}}=\frac{\frac{1}{\chi}}{1-\frac{1}{\chi}}=N.$
Hence, we have proved that for any $p>2^{\sharp}$ satisfying $p<\frac{2^{\sharp}\sqrt{\bar{\mu}}}{\sqrt{\bar{\mu}}-\sqrt{\bar{\mu}-\mu}}$, there is a
$\sigma>0$ depending on $p$ such that
\begin{equation}\label{a.16}
||\bar{w}||_{L^p(t^{1-2s},B^{N+1}_{r}(\bar{x}))}\leq\frac{C}{(R-r)^\sigma} ||\bar{w}||_{L^{2^{\sharp}}(t^{1-2s},B^{N+1}_R(\bar{x}))},\,\,\,\,\,0<r<R\leq1.
\end{equation}

Applying Young's inequality, we have
\begin{equation}\label{a.17}
\begin{array}{ll}
&\ds\frac{C}{(R-r)^{\sigma}}\Bigl(\int_{B^{N+1}_R(\bar{x})}t^{1-2s}|\bar{w}|^{2^{\sharp}}dxdt\Bigl)^{\frac{1}{2^{\sharp}}}\vspace{0.2cm}\\
&\leq
\ds\frac{C}{(R-r)^{\sigma}}\Bigl(\int_{B^{N+1}_R(\bar{x})}t^{1-2s}|\bar{w}|^{\gamma}dxdt\Bigl)^{\frac{\kappa}{\gamma}}
\Bigl(\int_{B^{N+1}_R(\bar{x})}t^{1-2s}|\bar{w}|^{p}dxdt\Bigl)^{\frac{1-\kappa}{p}}\vspace{0.2cm}\\
&\leq\ds\frac{1}{2}||\bar{w}||_{L^p(t^{1-2s},B^{N+1}_{R}(\bar{x}))}+\frac{C}{(R-r)^{\frac{\sigma}{\kappa}}}||\bar{w}||_{L^\gamma(t^{1-2s},B^{N+1}_{R}(\bar{x}))},
\end{array}
\end{equation}
where $0<\kappa<1, \gamma<2^{\sharp}$ and $p>2^{\sharp}$ with $p<\frac{2^{\sharp}\sqrt{\bar{\mu}}}{\sqrt{\bar{\mu}}-\sqrt{\bar{\mu}-\mu}}$.

So,
\begin{equation}\label{a.18}
||\bar{w}||_{L^p(t^{1-2s},B^{N+1}_{r}(\bar{x}))}\leq
\ds\frac{1}{2}||\bar{w}||_{L^p(t^{1-2s},B^{N+1}_{R}(\bar{x}))}+\frac{C}{(R-r)^{\frac{\sigma}{\kappa}}}||\bar{w}||_{L^\gamma(t^{1-2s},B^{N+1}_{R}(\bar{x}))},
\end{equation}
where $\gamma<2^{\sharp}$ and $2^{\sharp}<p<\frac{2^{\sharp}\sqrt{\bar{\mu}}}{\sqrt{\bar{\mu}}-\sqrt{\bar{\mu}-\mu}}$.

By using iteration argument, we deduce from \eqref{a.18} that for any $p>2^{\sharp}$
$$||\bar{w}||_{L^p(t^{1-2s},B^{N+1}_{r}(\bar{x}))}\leq
\frac{C}{(R-r)^{\frac{\sigma}{\kappa}}}||\bar{w}||_{L^\gamma(t^{1-2s},B^{N+1}_{R}(\bar{x}))},$$
where $p>2^{\sharp}$ satisfies $p<\frac{2^{\sharp}\sqrt{\bar{\mu}}}{\sqrt{\bar{\mu}}-\sqrt{\bar{\mu}-\mu}}$
and $\gamma<2^{\sharp}$.

Similarly, by applying \eqref{a.13} and iteration argument, we can get that
$$||w||_{L^p(B^N_{\frac{1}{2}}(\bar{x}))}\leq C||\bar{w}||_{L^\gamma(t^{1-2s},B^{N+1}_{1}(\bar{x}))},$$
where $p>2^{*}_s$ satisfies $p<\frac{2^*_s\sqrt{\bar{\mu}}}{\sqrt{\bar{\mu}}-\sqrt{\bar{\mu}-\mu}}$
and $\gamma<2^{*}_s$.
This completes our proof.
\end{proof}

\section{{\bf A Decay Estimate}}
Let $u$ be a solution  of
\begin{equation}\label{b.1}
\begin{cases}
\displaystyle(-\Delta)^s u-\frac{\mu u}{|x|^{2s}}=
|u|^{2^*_s-2}u  &\text{in}\;\R^N,\\
u\in H^{s}(\R^N).
\end{cases}
\end{equation}
In this section, we will estimate the decay of the solution of
\eqref{b.1}. We have the following result:
\begin{lem}\label{lemb.1}
Let $u$ be a solution of \eqref{b.1}. Then there exists a constant $\beta\in(s,\frac{N-2s}{2})$ such that
$$
|u(x)|\leq\frac{C}{|x|^{\frac{N-2s}{2}+\beta}},\,\,\,\,\forall |x|\geq1.
$$

\end{lem}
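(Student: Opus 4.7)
The idea is to reduce the estimate to a linear Hardy-type comparison argument whose barrier is an explicit homogeneous solution of the free Hardy equation. For $\alpha\in[0,\tfrac{N-2s}{2})$, define $\alpha$ by $\mu=\Upsilon_\alpha$; since $\alpha\mapsto\Upsilon_\alpha$ is strictly decreasing and $\mu<\Upsilon_s$ by hypothesis, one has $\alpha>s$. The claim will be proved for any $\beta\in(s,\alpha)$, which lies in the stated interval. The point is that the homogeneous function $\phi(x):=|x|^{-(\frac{N-2s}{2}+\beta)}$ satisfies
\begin{equation*}
(-\Delta)^s\phi \;=\; \Upsilon_\beta\,\frac{\phi}{|x|^{2s}} \quad\text{in}\;\R^N\setminus\{0\},
\end{equation*}
as follows from the standard evaluation of $(-\Delta)^s$ on power functions together with the definition of $\Upsilon_\beta$. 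Consequently,
\begin{equation*}
(-\Delta)^s\phi \;-\; \frac{\mu\,\phi}{|x|^{2s}} \;=\; (\Upsilon_\beta-\mu)\,\frac{\phi}{|x|^{2s}} \;>\;0,
\end{equation*}
so $\phi$ is a strict positive supersolution of the linear Hardy operator, with the gap $\Upsilon_\beta-\mu>0$ available to absorb the nonlinear term.

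\textbf{Step 1: initial pointwise decay.} Since $u\in H^s(\R^N)\cap L^{2^*_s}(\R^N)$, Lemma~\ref{lema.1-1} (and the Moser iteration behind it, cf.\ Lemma~\ref{lema.1}) yields $u\in L^p(\R^N)$ for every $p<\frac{2^*_s\sqrt{\bar\mu}}{\sqrt{\bar\mu}-\sqrt{\bar\mu-\mu}}$. Applying Lemma~\ref{lema.4} to the $s$-harmonic extension $\bar u$ on unit balls centered at $(\bar x,0)$ with $|\bar x|\to\infty$, one obtains $\|u\|_{L^\infty(B^N_{1/2}(\bar x))}\to0$, and hence $|u(x)|^{2^*_s-2}|x|^{2s}\to0$ as $|x|\to\infty$. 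In particular one may fix $R_0\gg1$ such that $|u(x)|^{2^*_s-2}\le \tfrac12(\Upsilon_\beta-\mu)|x|^{-2s}$ on $\{|x|\ge R_0\}$.

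\textbf{Step 2: comparison against the barrier.} Choose $M>0$ large enough that $M\phi\ge|u|$ on $\{|x|=R_0\}$, which is possible since $u$ is bounded there by Step~1. By Kato's inequality for the fractional Laplacian, $v:=|u|$ satisfies $(-\Delta)^s v-\mu v/|x|^{2s}\le |u|^{2^*_s-2}v$ weakly, while on $\{|x|\ge R_0\}$ the above choice of $R_0$ gives
\begin{equation*}
(-\Delta)^s(M\phi)-\frac{\mu\,(M\phi)}{|x|^{2s}} \;\ge\; |u|^{2^*_s-2}(M\phi).
\end{equation*}
Subtracting and invoking a weak comparison principle for the nonlocal operator $(-\Delta)^s-\mu|x|^{-2s}-|u|^{2^*_s-2}$ on the exterior $\{|x|>R_0\}$ (with $v\le M\phi$ on the boundary $\{|x|=R_0\}$ and $v,M\phi\to0$ at infinity) yields $|u(x)|\le M|x|^{-(\frac{N-2s}{2}+\beta)}$ for $|x|\ge R_0$. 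For $1\le|x|\le R_0$ the estimate follows trivially from the boundedness of $u$ after adjusting the constant $C$.

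\textbf{Expected main obstacle.} The delicate point is the nonlocal comparison principle on the unbounded exterior domain: a test function supported in $\{|x|>R_0\}$ still interacts with values of $v$ inside $\{|x|<R_0\}$ through the singular kernel. The cleanest way to handle this is to pass to the Caffarelli–Silvestre extension and test the weak formulation on $\mathcal{D}\cap\{|x|>R_0\}$ with $(\bar v-M\bar\phi)_+$ multiplied by a smooth cutoff, exploiting the decay of $\bar\phi$ (guaranteed by $\beta>s>0$) to make the cutoff errors vanish in the limit. It is precisely here that the standing assumption $\mu<\Upsilon_s$ is essential: it supplies both the strict gap $\Upsilon_\beta-\mu>0$ needed to absorb the subcritical nonlinear error and the room $\beta>s$ needed to control the tails of the extension at infinity.
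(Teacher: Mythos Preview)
Your overall strategy---using the homogeneous power $\phi(x)=|x|^{-(\frac{N-2s}{2}+\beta)}$ as a barrier for the Hardy operator---is exactly the one the paper uses. However, Step~1 contains a real gap that breaks the argument as written.

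The inference ``$\|u\|_{L^\infty(B^N_{1/2}(\bar x))}\to 0$, and hence $|u(x)|^{2^*_s-2}|x|^{2s}\to 0$'' is unjustified: the first statement is qualitative, while the second is quantitative and amounts to $|u(x)|=o(|x|^{-(N-2s)/2})$. Nothing in Lemma~\ref{lema.1-1} or Lemma~\ref{lema.4} gives such a rate; Lemma~\ref{lema.4} does not even give $L^\infty$ control (the exponent $p$ there is capped below $\frac{2^*_s\sqrt{\bar\mu}}{\sqrt{\bar\mu}-\sqrt{\bar\mu-\mu}}$), and mere membership in $L^p(\R^N)$ for large $p$ yields no pointwise decay rate. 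Without the bound $|u(x)|^{2^*_s-2}\le \tfrac12(\Upsilon_\beta-\mu)|x|^{-2s}$ on $\{|x|\ge R_0\}$, your absorption of the nonlinear term into the linear operator in Step~2 fails, and the comparison cannot be run.

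This is precisely why the paper inserts an intermediate result (Lemma~\ref{lemb.1.1}) before proving Lemma~\ref{lemb.1}: a dyadic iteration on the extension, based on the inequality
\[
\Big(\int_{\R^{N+1}_+\setminus B^{N+1}_{2R}}t^{1-2s}|\bar u_+|^{2^\sharp}\Big)^{1/2^\sharp}
\le \frac{C}{R^{1-\frac{N+2-2s}{2(N+1)}}}
\Big(\int_{B^{N+1}_{2R}\setminus B^{N+1}_{R}}t^{1-2s}|\bar u_+|^{2^\sharp}\Big)^{1/2^\sharp},
\]
first produces a genuine polynomial decay $|u(x)|\le C|x|^{-\gamma}$ with $\gamma>\tfrac{N-2s}{2}$. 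Only after that does the paper compare with the barrier (treating $|u|^{2^*_s-1}$ as a right-hand side rather than absorbing it, which needs the same threshold). Your Step~2 and the paper's final step are essentially the same; what you are missing is the content of Lemma~\ref{lemb.1.1}. If you supply that preliminary decay, either variant of the comparison (absorbing the potential, or putting $|u|^{2^*_s-1}$ on the right) goes through.
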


First, similar to  Proposition B.1 in \cite{cps}, we can prove
\begin{lem}\label{lemb.1.1}
Let $u$ be a solution of \eqref{b.1}. Then there exists a constant $\tilde{\beta}>\frac{N-2s}{2}$ such that
$$
|u(x)|\leq\frac{C}{|x|^{N+\tilde{\beta}}},\,\,\,\,\forall |x|\geq1.
$$

\end{lem}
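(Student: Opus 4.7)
The strategy, following Proposition~B.1 of \cite{cps}, is an iterative improvement of the pointwise decay driven by the Green's function representation of $(-\Delta)^{s}$ on $\R^{N}$. I plan to rewrite the equation as $(-\Delta)^{s}u=g(x):=\mu u(x)/|x|^{2s}+|u(x)|^{2^{*}_{s}-2}u(x)$, so that
\[
u(x)=c_{N,s}\int_{\R^{N}}|x-y|^{2s-N}\,g(y)\,dy.
\]
Any pointwise bound on $u$ gives decay of $g$, and the Riesz potential then gives an improved pointwise bound on $u$; the whole argument is a bootstrap loop that I plan to iterate a finite number of times, using as input the weaker decay already secured by Lemma~\ref{lemb.1}.

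For the base case I would use Lemma~\ref{lemb.1} to get $|u(x)|\le C|x|^{-\sigma_{0}}$ with $\sigma_{0}=(N-2s)/2+\beta\in(N/2,\,N-2s)$, so that
\[
|g(y)|\le C|y|^{-\sigma_{0}-2s}+C|y|^{-\sigma_{0}(2^{*}_{s}-1)}.
\]
I would then split $\R^{N}$ into the inner region $\{|y|\le|x|/2\}$, the annular region $\{|x|/2<|y|<2|x|\}$ and the outer region $\{|y|\ge 2|x|\}$. The inner region contributes essentially $C|x|^{2s-N}\int_{\R^{N}}|g(y)|dy$; here one checks $g\in L^{1}(\R^{N})$ using Lemma~\ref{lema.1-1} (which gives $u\in L^{p}$ for $p$ below the Hardy threshold) together with the explicit decay of $|y|^{-2s}$. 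The annular and outer pieces are bounded by the pointwise decay of $g$ after integrating the kernel. Summing produces an improved exponent $\sigma_{1}$, and iterating raises the decay exponent step by step. The gain per step is essentially $2s$, coming from the Hardy term; the nonlinear contribution is already subordinate because $\sigma_{0}>(N-2s)/2$, so the Hardy term is the one that drives the iteration.

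After finitely many steps the exponent crosses the natural Riesz-potential barrier $N-2s$ produced by the $|x|^{2s-N}\int g$ piece. To push past this barrier and reach the target exponent $N+\tilde\beta$ with $\tilde\beta>(N-2s)/2$, I would exploit cancellations of the low-order moments of $g$. Once $u\in L^{1}(\R^{N})$ (which is guaranteed after the bootstrap has forced exponent $>N$), the Fourier identity $\widehat g(\xi)=|\xi|^{2s}\widehat u(\xi)$ forces $\widehat g(0)=0$, i.e.\ $\int_{\R^{N}}g\,dy=0$. A Taylor expansion
\[
|x-y|^{2s-N}=|x|^{2s-N}\sum_{k\ge 0}P_{k}\bigl(y/|x|\bigr)
\]
then substitutes vanishing moments of $g$ for additional increments of the decay exponent, and repeating this procedure for higher moments of $g$ (controlled by higher smoothness of $\widehat u$ at $0$, which follows from the decay of $u$ already established) delivers the claimed exponent.

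The hardest step will be converting these qualitative moment-vanishing properties into the quantitative gain $\tilde\beta>(N-2s)/2$. The symbol $|\xi|^{2s}$ is only $C^{2s}$ at the origin, so only finitely many moments of $g$ vanish automatically, and one has to carefully combine the Fourier-side vanishing with the pointwise decay already produced by the earlier iterations. The hypothesis $N>6s$ plays its role here, because it provides precisely the numerical room between the starting exponent $\sigma_{0}$ from Lemma~\ref{lemb.1} and the target exponent $N+\tilde\beta$ for the iteration to close; the rest is a careful accounting of how each step of the bootstrap interacts with the moment expansion of the Riesz kernel.
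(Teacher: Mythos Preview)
There is a genuine circularity in your plan. You take Lemma~\ref{lemb.1} (the bound $|u(x)|\le C|x|^{-(N-2s)/2-\beta}$) as the base case of your bootstrap. But in this paper Lemma~\ref{lemb.1} is \emph{deduced from} Lemma~\ref{lemb.1.1}: once the strong decay of Lemma~\ref{lemb.1.1} is in hand, a comparison with the explicit function $\vartheta_{\bar\alpha}=|x|^{(2s-N)/2+\bar\alpha}$ (a supersolution of $(-\Delta)^{s}-\mu|x|^{-2s}$ for a suitable $\bar\alpha$) gives Lemma~\ref{lemb.1}. So you cannot invoke Lemma~\ref{lemb.1} here without an independent proof of it, and the paper offers none.

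The paper's argument is also entirely different in method and uses no prior pointwise information on $u$. It passes to the $s$-harmonic extension $\bar u$ on $\R^{N+1}_{+}$ and combines two ingredients. First, a Moser iteration in the spirit of Lemma~\ref{lema.4} gives, on exterior half-balls, $\|\bar u_{+}\|_{L^{p}(t^{1-2s},\,\R^{N+1}_{+}\setminus B^{N+1}_{2R}(0))}\le C R^{-(N-\delta_{p})}\|\bar u_{+}\|_{L^{2^{\sharp}}(t^{1-2s},\,\R^{N+1}_{+}\setminus B^{N+1}_{R}(0))}$. Second, a hole-filling energy estimate yields
\[
\Psi(2R):=\int_{\R^{N+1}_{+}\setminus B^{N+1}_{2R}(0)}t^{1-2s}|\bar u_{+}|^{2^{\sharp}}\,dx\,dt\ \le\ \tau\,\Psi(R),
\qquad \tau=\tau(R_{0})<1,
\]
with $\tau\to 0$ as $R_{0}\to\infty$. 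This dyadic inequality forces $\Psi(R)\le CR^{-\sigma}$ with $\sigma=\log_{2}(1/\tau)$, and by taking $R_{0}$ large one makes $\sigma$ as large as desired, in particular large enough that $\sigma/2^{\sharp}-\delta_{p}>(N-2s)/2$. A local $L^{\infty}$--$L^{p}$ bound then converts this into the pointwise estimate. No Riesz-potential representation, no moment cancellations, and no previously established pointwise decay are needed.

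Even setting the circularity aside, the second half of your scheme --- subtracting moments of $g$ to push the Riesz potential past the natural $|x|^{2s-N}$ barrier --- is fragile here: the symbol $|\xi|^{2s}$ has only $C^{2s}$ regularity at the origin, so only very few moments of $g$ are forced to vanish, far fewer than needed to reach the target exponent $N+\tilde\beta$ with $\tilde\beta>(N-2s)/2$. The paper sidesteps this entirely through the energy-decay mechanism above.
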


\begin{proof}
Choose $R_0>0$ large. For any $R>r>R_0$,
take $\bar{\xi}\in C_0^\infty(\R^{N+1}_+)$ with $\bar{\xi}=0$ in $B^{N+1}_r(0)$, $\bar{\xi}=1$ in $\R^{N+1}_+\backslash B^{N+1}_R(0)$,
$0\leq\bar{\xi}\leq1$ and $|\nabla \bar{\xi}|\leq \frac{C}{R-r}$. Let $\tilde{\eta}=\bar{\xi}^2\bar{u}^{1+2(q-1)}_+$.
Since for any small $\delta>0$,
$$
\ds\int_{\R^N\backslash B_{R_0}(0)}|u|^{(2^*_s-2)\frac{N}{2s}}dx
=\ds\int_{\R^N\backslash B_{R_0}(0)}|u|^{2^*_s}dx\leq \delta,
$$
if $R_0>0$ large enough, we can prove in a similar way as in \eqref{a.16} that for any
$2^{\sharp}<p<\frac{2^{\sharp}\sqrt{\bar{\mu}}}{\sqrt{\bar{\mu}}-\sqrt{\bar{\mu}-\mu}}$, there is an $R>0$ large depending on $p$
such that
\begin{equation}\label{b.1.1}
\|\bar{u}_{+}\|_{L^p(t^{1-2s},\R^{N+1}_+\setminus B^{N+1}_{2R}(0))}\leq\frac{C}{R^{N-\delta_p}}\|\bar{u}_{+}\|_{L^{2^{\sharp}}(t^{1-2s},\R^{N+1}_+\setminus B^{N+1}_R(0))},
\end{equation}
where $\delta_p>0$ depending on $p$ and $2^{\sharp}=\frac{2(N+1)}{N}$.

Next we estimate $\|\bar{u}_{+}\|_{L^{2^{\sharp}}(\R^{N+1}_+\setminus B^{N+1}_{R}(0))}$. Let $\bar{\eta}=\bar{\xi}^2\bar{u}$, $\bar{\xi}=0$ in $B^{N+1}_R(0)$,
$\bar{\xi}=1$ in $\R^{N+1}_+\backslash B^{N+1}_{2R}(0)$,
$0\leq\bar{\xi}\leq1$ and $|\nabla \bar{\xi}|\leq \frac{C}{R}$. Then similar to the proof of \eqref{a.12},
 by H\"{o}lder inequality
we have
\begin{equation}\label{b.1.3}
\begin{array}{ll}
\Big(\ds\int_{\R^{N+1}_+}t^{1-2s}|\xi \bar{u}_{+}|^{2^{\sharp}}dxdt\Big)^{\frac{1}{2^{\sharp}}}
&\leq \ds\frac{C}{R}\Big(\ds\int_{B^{N+1}_{2R}(0)\setminus B^{N+1}_R(0)}t^{1-2s}| \bar{u}_{+}|^{2}dxdt\Big)^{\frac{1}{2}}\vspace{0.2cm}\\
&\leq\frac{C}{R^{1-\frac{N+2-2s}{2(N+1)}}}\Big(\ds\int_{B^{N+1}_{2R}(0)\setminus B^{N+1}_{R}(0)}t^{1-2s}| \bar{u}_{+}|^{2^{\sharp}}dxdt\Big)^{\frac{1}{2^{\sharp}}}.
 \end{array}
\end{equation}
As a result,
\begin{equation}\label{b.1.4}
\begin{array}{ll}
~~~~&\ds\int_{\R^{N+1}_+\backslash B^{N+1}_{2R}(0)}t^{1-2s}| \bar{u}_{+}|^{2^{\sharp}}dxdt\vspace{0.2cm}\\
&\leq\Bigl(\ds\frac{C}{R^{A}}\Bigl)^{2^{\sharp}}\ds\int_{B^{N+1}_{2R}(0)\setminus B^{N+1}_{R}(0)}t^{1-2s}| \bar{u}_{+}|^{2^{\sharp}}dxdt\vspace{0.2cm}\\
&=\Bigl(\ds\frac{C}{R^{A}}\Bigl)^{2^{\sharp}}\ds\int_{\R^{N+1}_+\setminus B^{N+1}_R(0)}t^{1-2s}| \bar{u}_{+}|^{2^{\sharp}}dxdt
-\Bigl(\frac{C}{R^{A}}\Bigl)^{2^{\sharp}}\ds\int_{\R^{N+1}_+\setminus B^{N+1}_{2R}(0)}t^{1-2s}| \bar{u}_{+}|^{2^{\sharp}}dxdt,
\end{array}
\end{equation}
where $A=1-\frac{N+2-2s}{2(N+1)}$.
So,
\begin{equation}\label{b.1.5}
\int_{\R^{N+1}_+\setminus B^{N+1}_{2R}(0)}t^{1-2s}| \bar{u}_{+}|^{2^{\sharp}}dxdt\leq
\frac{C'}{1+C'}\ds\int_{\R^{N+1}_+\setminus B^{N+1}_R(0)}t^{1-2s}| \bar{u}_{+}|^{2^{\sharp}}dxdt,
\end{equation}
where
\begin{equation}\label{b.1.6}
C'=\Bigl(\frac{C}{R^{A}}\Bigl)^{2^{\sharp}}.
\end{equation}
Let $\Psi(R)=\int_{\R^{N+1}_+\setminus B^{N+1}_{R}(0)}t^{1-2s}| u_{+}|^{2^{\sharp}}dxdt$ and $\tau=\frac{C'}{1+C'}$. Then from \eqref{b.1.5},
$$\Psi(2R)\leq \tau\Psi(R),\,\,\,\forall R\geq R_0,$$
which implies that
$$\Psi(2^iR_0)\leq \tau^i\Psi(R_0).$$

For any $|(x,t)|\geq R_0$, there is an $i$ such that
$$2^iR_0\leq |(x,t)|\leq 2^{i+1}R_0.$$
Hence
$$\Psi(|(x,t)|)\leq \Psi(2^iR_0)\leq \tau^i\Psi(R_0)\leq \tau^{\log_2|(x,t)|-\log_2(2R_0)}\Psi(R_0).$$
Since
$$\tau^{\log_2|(x,t)|}=2^{\log_2|(x,t)|\log_2\tau}=|(x,t)|^{\log_2\tau},$$
we have
$$\Psi(|(x,t)|)\leq C|(x,t)|^{\log_2\tau}=\frac{C}{|(x,t)|^{\log_2\frac{1}{\tau}}}.$$
So we have proved that there is a $\sigma>0$ independent of $p$ such that
$$\Psi(|(x,t)|)\leq \frac{C}{|(x,t)|^{\sigma}},\,\,\,\,|(x,t)|\geq R_0,$$
where $\sigma=\log_2\frac{1}{\tau}$.
Fix $p>2^{\sharp}$ and $p<\frac{2^{\sharp}\sqrt{\bar{\mu}}}{\sqrt{\bar{\mu}}-\sqrt{\bar{\mu}-\mu}}$. It follows from \eqref{b.1.1} that
$$\|\bar{u}_{+}\|_{L^p(t^{1-2s},\R^{N+1}_+\setminus B^{N+1}_{2R}(0))}\leq\frac{C}{R^{N-\delta_p+\frac{\sigma}{2^{\sharp}}}}
,\,\,\,|(x,t)|=R\geq R_0.$$
Using the definition of $\tau$, we can choose $R_0$ large enough such that $\frac{\sigma}{2^{\sharp}}-\delta_p\geq \tilde{\beta}>\frac{N-2s}{2}$ and
then
\begin{equation}\label{b.1.7}
\|\bar{u}_{+}\|_{L^p(t^{1-2s},\R^{N+1}_+\setminus B^{N+1}_{2R}(0))}\leq\frac{C}{R^{N+\tilde{\beta}}},\,\,\,|x|=R\geq R_0.
\end{equation}
Similarly,
\begin{equation}\label{b.1.8}
\|(-\bar{u})_{+}\|_{L^p(t^{1-2s},\R^{N+1}_+\setminus B^{N+1}_{2R}(0))}\leq\frac{C}{R^{N+\tilde{\beta}}},\,\,\,|x|=R\geq R_0.
\end{equation}
Thus, we obtain
$$
\|\bar{u}\|_{L^p(t^{1-2s},\R^{N+1}_+\setminus B^{N+1}_{2R}(0))}\leq\frac{C}{R^{N+\tilde{\beta}}},\,\,\,|x|=R\geq R_0.
$$

Now for any $(x,t)$ with $|(x,t)|=4R>2R_0$,
$$|\bar{u}(x,t)|\leq \max\limits_{(y,\tilde{t})\in B^{N+1}_1(x,t)}|\bar{u}(y,\tilde{t})|\leq C\|\bar{u}\|_{L^p(t^{1-2s}, B^{N+1}_{2}(x,t))}
\leq C\|\bar{u}\|_{L^p(t^{1-2s},\R^{N+1}_+\setminus B^{N+1}_{2R}(0))}\leq\frac{C}{R^{N+\tilde{\beta}}}.$$
By the fact that $u(x)=\bar{u}(x,0)$, the result follows.
\end{proof}

\begin{proof}[\textbf{Proof of Lemma \ref{lemb.1}}]

For every $\alpha\in(-\frac{N}{2}-s,\frac{N}{2}-s)$, let $\vartheta_\alpha=|x|^{\frac{2s-N}{2}+\alpha}$. Then it follows from
Lemma 3.1 in \cite{fa} that
$$(-\Delta)^s\vartheta_\alpha=\Upsilon_\alpha|x|^{-2s}\vartheta_\alpha\,\,\,\,\,\,\,\text{in}\,\R^N\setminus\{0\},$$
where
$$\Upsilon_\alpha=
2^{2s}\frac{\Gamma(\frac{N+2s+2\alpha}{4})\Gamma(\frac{N+2s-2\alpha}{4})}{\Gamma(\frac{N-2s-2\alpha}{4})\Gamma(\frac{N-2s+2\alpha}{4})}.$$
So by the assumption $0\leq\mu<\Upsilon_s$ and $\Upsilon_\alpha$ is even on $\alpha$, we can find $\bar{\alpha}\in(-\frac{N-2s}{2},-s)$
such that for $|x|\geq1$,
$$(-\Delta)^s\vartheta_{\bar{\alpha}}-\mu|x|^{-2s}\vartheta_{\bar{\alpha}}
=(\Upsilon_{\bar{\alpha}}-\mu)|x|^{-\frac{2s+N}{2}+\bar{\alpha}}
\geq \frac{C}{|x|^{(N+\tilde{\beta})(2^*_s-1)}}.
$$

On the other hand, from Lemma \ref{lemb.1.1},
$$(-\Delta)^su_+-\mu|x|^{-2s}u_+\leq u_+^{2^*_s-1}\leq\frac{C}{|x|^{(N+\tilde{\beta})(2^*_s-1)}},\,\,\,\,|x|\geq1.$$
Letting $\beta=-\bar{\alpha}$, by comparison, we have
$$u_+(x)\leq\frac{C}{|x|^{\frac{N-2s}{2}+\beta}},\,\,\,\,|x|\geq1,$$
where $\beta\in(s,\frac{N-2s}{2})$.
Similarly,
$$(-u)_+(x)\leq\frac{C}{|x|^{\frac{N-2s}{2}+\beta}},\,\,\,\,|x|\geq1.$$

\end{proof}

\begin{prop}\label{propbb3}
Let $u$ be a solution of \eqref{b.1}. Then, we have
$$
|u(x)|\leq\frac{C}{|x|^{\frac{N-2s}{2}-\beta}},\,\,\,\,\forall \,|x|\leq1,
$$
where $\beta$ is given in Lemma \ref{lemb.1}.
\end{prop}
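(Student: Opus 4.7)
The plan is to reduce Proposition \ref{propbb3} to the decay estimate at infinity already established in Lemma \ref{lemb.1}, via the Kelvin transform, which is the natural conformal inversion swapping $0$ and $\infty$ and leaving the critical Hardy equation \eqref{b.1} invariant. I would define
\[
\tilde u(x):=|x|^{-(N-2s)}u(x/|x|^{2}),\qquad x\in\R^{N}\setminus\{0\}.
\]
Using the conformal covariance relation $(-\Delta)^{s}\tilde u(x)=|x|^{-N-2s}[(-\Delta)^{s}u](x/|x|^{2})$, the $-2s$-homogeneity of the Hardy weight, and the identity $(N-2s)(2^{*}_{s}-1)=N+2s$, a direct computation shows
\[
(-\Delta)^{s}\tilde u-\mu|x|^{-2s}\tilde u=|\tilde u|^{2^{*}_{s}-2}\tilde u\quad\text{in }\R^{N}\setminus\{0\}.
\]
Since both the $\dot H^{s}$-seminorm and the $L^{2^{*}_{s}}$-norm are Kelvin-invariant, $\tilde u$ belongs to the same variational class as $u$.

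Next, I apply Lemma \ref{lemb.1} to $\tilde u$ to obtain, with the same exponent $\beta\in(s,\tfrac{N-2s}{2})$ produced there,
\[
|\tilde u(y)|\le C|y|^{-\frac{N-2s}{2}-\beta}\quad\text{for all }|y|\ge 1.
\]
Setting $y=x/|x|^{2}$ (so that $|y|\ge 1$ is equivalent to $|x|\le 1$) and inverting the transform via $u(x)=|x|^{-(N-2s)}\tilde u(x/|x|^{2})$ yields
\[
|u(x)|\le C|x|^{-(N-2s)}\cdot|x|^{\frac{N-2s}{2}+\beta}=\frac{C}{|x|^{\frac{N-2s}{2}-\beta}},\qquad |x|\le 1,
\]
which is exactly the desired bound.

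The main obstacle is justifying that Lemma \ref{lemb.1} can in fact be applied to $\tilde u$: one must check that $\tilde u$ solves \eqref{b.1} as a \emph{global} weak solution, not just on $\R^{N}\setminus\{0\}$, i.e.\ that the inversion has not introduced a hidden distributional mass at $0$. Fortunately, Lemma \ref{lemb.1.1} already gives the decay $|u(y)|\le C|y|^{-N-\tilde\beta}$ with $\tilde\beta>\tfrac{N-2s}{2}$, so $\tilde u$ is bounded in a neighbourhood of the origin, ruling out any concentrated singularity and letting the weak equation extend through $0$. If one prefers to avoid Kelvin inversion, the same conclusion can be reached by direct comparison on $B_{1}(0)$ with the radial supersolution $w(x)=K|x|^{-(N-2s)/2+\beta}$: the identity $(-\Delta)^{s}w-\mu|x|^{-2s}w=(\Upsilon_{\beta}-\mu)|x|^{-2s}w$ together with $\Upsilon_{\beta}>\mu$ (a consequence of $\mu<\Upsilon_{s}$ and the monotonicity of $\alpha\mapsto\Upsilon_{\alpha}$ in $|\alpha|$) shows that $w$ dominates the nonlinear term $w^{2^{*}_{s}-1}$ near the origin since $w^{2^{*}_{s}-2}|x|^{2s}=|x|^{4s\beta/(N-2s)}\to 0$; the boundary values on $\partial B_{1}$ are then controlled by Lemma \ref{lemb.1}, and a weak comparison principle for the Hardy–fractional operator closes the argument.
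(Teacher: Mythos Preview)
Your proposal is correct and follows essentially the same route as the paper: define the Kelvin transform $v(x)=|x|^{2s-N}u(x/|x|^{2})$, observe it satisfies the same Hardy--critical equation, apply Lemma~\ref{lemb.1} to $v$ for $|x|\ge 1$, and unwind to get the bound on $u$ for $|x|\le 1$. Your discussion of the removable singularity at the origin and the alternative comparison argument are more careful than the paper's own brief proof, but the core idea is identical.
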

\begin{proof}
Applying the Kelvin transformation
$v(x)=|x|^{2s-N}u\bigl(\frac{x}{|x|^2}\bigr)$, we know that $v$
satisfies
\[
(-\Delta)^s v- \frac{\mu v}{|x|^{2s}}=|v|^{2^{*}_s-2}v,
\quad\forall\;|x|\geq 1,
\]
where $0\leq \mu<\bar{\mu}.$
By Lemma \ref{lemb.1}, we have
$$|v(x)|\leq \frac{C}{|x|^{\frac{N-2s}{2}+\beta}},\quad\forall\;|x|\geq 1.$$
As a result,
$$|u(x)|\leq \frac{C}{|x|^{\frac{N-2s}{2}-\beta}},\quad\forall\;|x|\leq 1.$$
Thus, the result follows.

\end{proof}
\section{\bf A local Pohozaev identity}
In this section, we give a local Pohozaev identity.
For $\mathcal{F}\subset \R^{N+1}_{+}$, we recall that
$\partial_+\mathcal{F}=\big\{z=(x,t)\in\R^{N+1}_{+}:\,(x,t)\in\partial\mathcal{F}\, \text{and}\, t>0\big\}$ and
$\partial_b\mathcal{F}=\partial\mathcal{F}\cap (\R^N\times\{0\})$.  We have the following result.

\begin{prop}\label{propc.1}
Let $E\subset \R^{N+1}_{+}$ and we assume that $\bar{u}$ is a solution of
\begin{equation}\label{d.1}
\begin{cases}
\displaystyle div(t^{1-2s}\nabla \bar{u})=0,  &\text{in}\;E,\\
\mathcal{A}_s(\bar{u})=\ds\frac{\mu}{|x|^{2s}}u+|u|^{p-2}u+au, &\text{on}\;\partial_b E.
\end{cases}
\end{equation}
Then for $\mathcal{F}\subset E,$ there holds
\begin{eqnarray*}
&&\Big(\frac{N}{p}-\frac{N-2s}{2}\Big)\ds\int_{\partial_b\mathcal{F}}|u|^pdx+sa\ds\int_{\partial_b\mathcal{F}}|u|^2dx
+s\mu\ds\int_{\partial_b\mathcal{F}}\frac{x\cdot x_0|u|^2}{|x|^{2s+2}}dx\\
&=&\frac{1}{2}\ds\int_{\partial\partial_b\mathcal{F}}\Big(a+\frac{\mu}{|x|^{2s}}\Big)|u|^2(x-x_0)\cdot\nu_xdS_x
+\frac{1}{p}\ds\int_{\partial\partial_b\mathcal{F}}|u|^p(x-x_0)\nu_xdS_x\\
&&+\ds\int_{\partial_+\mathcal{F}}t^{1-2s}\Big((z-z_0,\nabla\bar{u})\nabla\bar{u}-(z-z_0)\frac{|\nabla\bar{u}|^2}{2},\nu_z\Big)dS_z\\
&&+\frac{N-2s}{2}\ds\int_{\partial_+\mathcal{F}}t^{1-2s}\bar{u}\frac{\partial \bar{u}}{\partial\nu_z}dS_z.
\end{eqnarray*}
\end{prop}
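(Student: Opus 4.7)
The plan is the usual Pohozaev multiplier strategy, but carried out in the degenerate weighted setting on $\R^{N+1}_+$, so care is needed with the Muckenhoupt weight $t^{1-2s}$ and with the two distinct pieces of $\partial\mathcal{F}$.

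\medskip

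\noindent\textbf{Step 1: multiply by the dilation vector field.}
First I would multiply $\mathrm{div}(t^{1-2s}\nabla\bar{u})=0$ by $(z-z_0)\cdot\nabla\bar{u}$ (with $z=(x,t)$ and $z_0=(x_0,0)$) and integrate on $\mathcal{F}$. Integration by parts yields
\[
\int_{\partial\mathcal{F}} t^{1-2s}\frac{\partial\bar{u}}{\partial\nu_z}(z-z_0)\cdot\nabla\bar{u}\,dS
\;=\;\int_{\mathcal{F}} t^{1-2s}\nabla\bar{u}\cdot\nabla\bigl((z-z_0)\cdot\nabla\bar{u}\bigr)dz.
\]
Using $\nabla\bar{u}\cdot\nabla((z-z_0)\cdot\nabla\bar{u})=|\nabla\bar{u}|^2+\tfrac{1}{2}(z-z_0)\cdot\nabla|\nabla\bar{u}|^2$ and the crucial weighted divergence identity
\[
\mathrm{div}\bigl(t^{1-2s}(z-z_0)\bigr)\;=\;(N+2-2s)\,t^{1-2s}
\]
(this is where the exponent $N-2s$ enters), another integration by parts produces
\[
-\tfrac{N-2s}{2}\!\int_{\mathcal{F}} t^{1-2s}|\nabla\bar{u}|^2 dz
= \int_{\partial\mathcal{F}} t^{1-2s}\frac{\partial\bar{u}}{\partial\nu_z}(z-z_0)\!\cdot\!\nabla\bar{u}\,dS - \tfrac{1}{2}\!\int_{\partial\mathcal{F}} t^{1-2s}(z-z_0)\!\cdot\!\nu_z |\nabla\bar{u}|^2 dS.
\]

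\medskip

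\noindent\textbf{Step 2: multiply by $\bar{u}$.} Testing the equation against $\bar{u}$ gives
$\int_{\mathcal{F}} t^{1-2s}|\nabla\bar{u}|^2\,dz=\int_{\partial\mathcal{F}} t^{1-2s}\bar{u}\,\partial\bar{u}/\partial\nu_z\,dS.$
Substituting this into the identity of Step 1 kills the volume term and leaves a purely boundary identity on $\partial\mathcal{F}$.

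\medskip

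\noindent\textbf{Step 3: split $\partial\mathcal{F}=\partial_+\mathcal{F}\cup\partial_b\mathcal{F}$.}
On $\partial_b\mathcal{F}$ one has $\nu_z=-e_{N+1}$ and $(z-z_0)\cdot\nu_z=0$, so the $|\nabla\bar{u}|^2$ surface term vanishes there. Moreover $t^{1-2s}\partial\bar{u}/\partial\nu_z|_{t=0}=\mathcal{A}_s\bar{u}$ (with $t^{2-2s}(\partial_t\bar{u})^2\to0$ as $t\to0^+$, which I would justify from the $\bar{u}=u+O(t^{2s})$ asymptotics near the base), so the $\partial_b\mathcal{F}$ contribution becomes
\[
\int_{\partial_b\mathcal{F}}\mathcal{A}_s\bar{u}\Bigl[(x-x_0)\cdot\nabla u+\tfrac{N-2s}{2}u\Bigr] dx.
\]
Plugging in the boundary condition $\mathcal{A}_s\bar{u}=\mu u/|x|^{2s}+|u|^{p-2}u+au$ gives three pieces.

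\medskip

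\noindent\textbf{Step 4: integrate by parts on $\partial_b\mathcal{F}$ in $x$.}
For the $au$ and $|u|^{p-2}u$ terms I would use $2u(x-x_0)\cdot\nabla u=(x-x_0)\cdot\nabla u^2$ and $pu|u|^{p-2}(x-x_0)\cdot\nabla u=(x-x_0)\cdot\nabla|u|^p$, producing the surface terms on $\partial(\partial_b\mathcal{F})$ and volume terms $-\tfrac{Na}{2}\int u^2$ and $-\tfrac{N}{p}\int|u|^p$. For the Hardy term, the computation
\[
\mathrm{div}_x\Bigl(\frac{x-x_0}{|x|^{2s}}\Bigr)=\frac{N-2s}{|x|^{2s}}+\frac{2s\,x\cdot x_0}{|x|^{2s+2}}
\]
is the key step: it extracts both a $\frac{\mu(N-2s)}{2|x|^{2s}}u^2$ contribution (which cancels against the $\frac{N-2s}{2}\mathcal{A}_s\bar{u}\cdot u$ term) and the characteristic Hardy leftover $s\mu\, x\cdot x_0/|x|^{2s+2}$.

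\medskip

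\noindent\textbf{Step 5: collect.} After these cancellations the coefficients work out to $\tfrac{N}{p}-\tfrac{N-2s}{2}$ in front of $\int_{\partial_b\mathcal{F}}|u|^p$, $sa$ in front of $\int_{\partial_b\mathcal{F}}u^2$, and $s\mu$ in front of $\int_{\partial_b\mathcal{F}}x\!\cdot\!x_0 u^2/|x|^{2s+2}$, which is exactly the left-hand side of the stated identity. On the right, the $\partial_+\mathcal{F}$ pieces combine into $((z-z_0,\nabla\bar{u})\nabla\bar{u}-(z-z_0)|\nabla\bar{u}|^2/2,\nu_z)$ plus $\tfrac{N-2s}{2}\bar{u}\,\partial\bar{u}/\partial\nu_z$, and the $\partial(\partial_b\mathcal{F})$ terms give the two boundary integrals.

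\medskip

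The main obstacle is bookkeeping: making sure the weight $t^{1-2s}$ gives the right exponent $N+2-2s$ (hence the coefficient $(N-2s)/2$), that the $t^{2-2s}(\partial_t\bar{u})^2$ contribution on $\partial_b\mathcal{F}$ genuinely vanishes, and that the Hardy divergence calculation produces the precise residual needed to cancel against the $(N-2s)/2$-weighted piece. Once these are in place the identity falls out by algebraic collection. Regularity of $\bar{u}$ in $\mathcal{F}$ is enough whenever $\mathcal{F}$ stays away from the origin; in the application to case (iii) the identity is then extended by removing a small ball around the singular point and letting its radius tend to zero, exactly as the authors will do around \eqref{4.3.1}--\eqref{4.3.4}.
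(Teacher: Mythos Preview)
Your proposal is correct and follows essentially the same route as the paper: both arguments multiply the extended equation by the dilation field $(z-z_0)\cdot\nabla\bar u$, use the weighted divergence identity to extract the coefficient $\tfrac{N-2s}{2}$, test against $\bar u$ to convert the bulk energy into boundary terms, and then integrate by parts on $\partial_b\mathcal{F}$ (including the Hardy computation $\mathrm{div}_x((x-x_0)/|x|^{2s})$) to arrive at the stated identity. The only cosmetic difference is that the paper packages Step~1 as a single pointwise divergence identity before integrating, whereas you perform two integrations by parts; the content is the same.
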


\begin{proof}
Note that
\begin{eqnarray*}
&&
div(t^{1-2s}\nabla \bar{u})(z-z_{0},\nabla \bar{u})\\
&=&
div[t^{1-2s}\nabla \bar{u}(z-z_{0},\nabla \bar{u})]
-(t^{1-2s}\nabla \bar{u},\nabla(z-z_{0},\nabla \bar{u}))\\
&=&
div[t^{1-2s}\nabla \bar{u}(z-z_{0},\nabla \bar{u})]
-t^{1-2s}\Big[(z-z_{0},\nabla\big(\frac{|\nabla \bar{u}|^{2}}{2}\big))
+|\nabla \bar{u}|^{2}\Big]\\
&=&
div\Big[t^{1-2s}\nabla \bar{u}(z-z_{0},\nabla \bar{u})
-t^{1-2s}(z-z_{0})\frac{|\nabla \bar{u}|^{2}}{2}\Big]
+\frac{N-2s}{2}t^{1-2s}|\nabla\bar{u}|^2.
\end{eqnarray*}

Then
$$
div\Big\{t^{1-2s}(z-z_0,\nabla\bar{u})\nabla\bar{u}-t^{1-2s}\frac{|\nabla\bar{u}|^2}{2}(z-z_0)\Big\}
+\frac{N-2s}{2}t^{1-2s}|\nabla\bar{u}|^2=0.
$$
So we find
\begin{equation}\label{d.2}
\begin{array}{ll}
&\ds\int_{\partial_+\mathcal{F}}t^{1-2s}\Big((z-z_0,\nabla\bar{u})\nabla\bar{u}-(z-z_0)\frac{|\nabla\bar{u}|^2}{2},\nu_z\Big)dS_z\vspace{0.2cm}\\
&=-\ds\int_{\partial_b\mathcal{F}}(x-x_0,\nabla_x\bar{u})\mathcal{A}_s\bar{u}dx-\frac{N-2s}{2}\ds\int_{\mathcal{F}}t^{1-2s}|\nabla\bar{u}|^2dxdt.
\end{array}
\end{equation}
By using the fact that $\mathcal{A}_s\bar{u}=\frac{\mu}{|x|^{2s}}u+|u|^{p-2}u+au\,\text{on}\,\partial_b\mathcal{F}$
and performing integration by parts, one has
\begin{equation}\label{d.3}
\begin{array}{ll}
\ds\int_{\partial_b\mathcal{F}}(x-x_0,\nabla_x\bar{u})\mathcal{A}_s\bar{u}dx
&=\ds\frac{-N+2s}{2}\mu\ds\int_{\partial_b\mathcal{F}}\frac{u^2}{|x|^{2s}}dx-s\mu\ds\int_{\partial_b\mathcal{F}}
\frac{x\cdot x_0|u|^2}{|x|^{2s+2}}dx\vspace{0.2cm}\\
&\quad+\ds\frac{\mu}{2}\ds\int_{\partial\partial_b\mathcal{F}}\frac{|u|^2}{|x|^{2s}}(x-x_0)\cdot\nu_xdS_x\vspace{0.2cm}\\
&\quad-\ds\frac{N}{p}\ds\int_{\partial_b\mathcal{F}}|u|^pdx+\frac{1}{p}\ds\int_{\partial\partial_b\mathcal{F}}|u|^p(x-x_0)\cdot\nu_xdS_x\vspace{0.2cm}\\
&\quad-\ds\frac{Na}{2}\ds\int_{\partial_b\mathcal{F}}|u|^2dx+\frac{a}{2}\ds\int_{\partial\partial_b\mathcal{F}}|u|^2(x-x_0)\cdot\nu_xdS_x,
\end{array}
\end{equation}
where we use the fact that
\begin{eqnarray*}
\ds\int_{\partial_b\mathcal{F}}(x-x_0,\nabla_x\bar{u})\frac{u}{|x|^{2s}}dx
&=&\ds\frac{-N+2s}{2}\ds\int_{\partial_b\mathcal{F}}\frac{u^2}{|x|^{2s}}dx-s\ds\int_{\partial_b\mathcal{F}}
\frac{x\cdot x_0|u|^2}{|x|^{2s+2}}dx\\
&&+\ds\frac{1}{2}\ds\int_{\partial\partial_b\mathcal{F}}\frac{|u|^2}{|x|^{2s}}(x-x_0)\cdot\nu_xdS_x,
\end{eqnarray*}
and
\begin{eqnarray*}
\ds\int_{\partial_b\mathcal{F}}(x-x_0,\nabla_x\bar{u})|u|^{p-2}u dx
&=&-\ds\frac{N}{p}\ds\int_{\partial_b\mathcal{F}}|u|^{p}dx
+\ds\frac{1}{p}\ds\int_{\partial\partial_b\mathcal{F}}|u|^p(x-x_0)\cdot\nu_xdS_x.
\end{eqnarray*}
On the other hand,
\begin{equation}\label{d.4}
\begin{array}{ll}
\ds\int_{\mathcal{F}}t^{1-2s}|\nabla\bar{u}|^2dxdt
=\ds\int_{\partial_b\mathcal{F}}\Big(\frac{\mu}{|x|^{2s}}u^2+|u|^p+au^2\Big)dx+\ds\int_{\partial_+\mathcal{F}}t^{1-2s}\bar{u}\frac{\partial \bar{u}}{\partial\nu_z}dS_z.
\end{array}
\end{equation}
Thus, from \eqref{d.2} to \eqref{d.4}, the desired result follows.
\end{proof}

\section{ \bf Decomposition of approximating solutions}

 In this
section, we give a result describing the composition of
approximating solutions bounded in $H^{s}_{0}(\Omega)$ obtained as a
solution of \eqref{1.3} with $\epsilon=\epsilon_{n}$.

\begin{prop}\label{propd.1}
Suppose that $N>6s.$ Let $u_{n}$ be a solution of \eqref{1.3} with
$\epsilon=\epsilon_{n}\rightarrow 0,$ satisfying $\|u_{n}\|\leq C$
for some constant C. Then

(i) $u_{n}$ can be decomposed as
\begin{equation}\label{c.1}
u_{n}=u_{0}+\sum_{j=1}^{m}\rho_{0,\Lambda_{n,j}}(U_{j})
+\sum_{j=m+1}^{h}\rho_{x_{n,j},\Lambda_{n,j}}(U_{j})+\omega_{n},
\end{equation}
where $\omega_{n}\rightarrow 0$ in $H^{s}(\Omega)$, $u_{0}$ is a
solution for \eqref{1.1}. For
$j=1,2,\cdot\cdot\cdot,m,\Lambda_{n,j}\rightarrow\infty$ as $n\rightarrow\infty,$ and $U_{j}$ is a solution
of
\begin{equation}\label{c.2}
(-\Delta)^s u-\mu\frac{u}{|x|^{2s}}=b_{j}|u|^{2^{*}_s-2}u,\,\,\,u\in
D^{s}(\R^{N}),
\end{equation}
for some $b_{j}\in(0,1].$ For
$j=m+1,m+2,\cdot\cdot\cdot,h,x_{n,j}\in\Omega,\Lambda_{n,j}d(x_{n,j},\partial\Omega)\rightarrow\infty,\Lambda_{n,j}|x_{n,j}|\rightarrow\infty$ as $n\rightarrow\infty,$ and $U_{j}$ is a solution
of
\begin{equation}\label{c.2}
(-\Delta)^s u=b_{j}|u|^{2^{*}_s-2}u,\,\,\,u\in
D^{s}(\R^{N}),
\end{equation}
for some $b_{j}\in (0,1].$

(ii) Set $x_{n,i}=0$ for $i=1,2,\cdots,m.$ For $i,j=1,2,\cdot\cdot\cdot,h,$ if $i\neq j$, then as
$n\rightarrow\infty,$
\begin{equation}\label{c1.3}
\frac{\Lambda_{n,j}}{\Lambda_{n,i}}+\frac{\Lambda_{n,i}}{\Lambda_{n,j}}
+\Lambda_{n,j}\Lambda_{n,i}|x_{n,i}-x_{n,j}|^{2}\rightarrow\infty.
\end{equation}
\end{prop}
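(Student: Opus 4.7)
My plan is to carry out the fractional analogue of Struwe's global compactness profile decomposition in the spirit of \cite{cs,yyy,cs1,cps}, extracting bubbles iteratively from $u_n$ while tracking how the Hardy term behaves under rescaling. First, boundedness $\|u_n\|\leq C$ gives, along a subsequence, $u_n\rightharpoonup u_0$ in $H^s_0(\Om)$ with $u_n\to u_0$ a.e.\ and in every $L^p(\Om)$ with $p<2^*_s$. Passing to the limit in the weak form of \eqref{1.3} (using the Hardy inequality \eqref{1.2.2} and $\mu<\Upsilon_s<\bar\mu$ for the nonlocal Hardy term, the compact embedding $H^s_0\hookrightarrow L^2$ for the $au$ term, and $\epsilon_n\to 0$ together with a.e.\ convergence and Vitali for the critical term), one checks that $u_0\in H^s_0(\Om)$ is a solution of \eqref{1.1}.

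Setting $v_n^{(1)}:=u_n-u_0$, a fractional Brezis--Lieb splitting shows that $v_n^{(1)}$ is an approximate Palais--Smale sequence at the residual energy level for the ``limit'' functional. If $v_n^{(1)}\to 0$ strongly we stop with $m=h=0$. Otherwise, concentration-compactness produces $x_{n,1}\in\overline\Om$ and $\Lambda_{n,1}\to\infty$ such that
$$\tilde v_n^{(1)}(y):=\Lambda_{n,1}^{-(N-2s)/2}\,v_n^{(1)}\Bigl(x_{n,1}+\tfrac{y}{\Lambda_{n,1}}\Bigr) \rightharpoonup U_1\not\equiv 0 \quad \text{in } D^s(\R^N),$$
after passing to a subsequence. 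Rescaling \eqref{1.3} yields the equation
$$(-\Delta)^s\tilde v_n^{(1)} - \frac{\mu\,\tilde v_n^{(1)}}{\bigl|x_{n,1}+y/\Lambda_{n,1}\bigr|^{2s}} = \Lambda_{n,1}^{-(N-2s)\epsilon_n/2}|\tilde v_n^{(1)}|^{2^*_s-2-\epsilon_n}\tilde v_n^{(1)} + a\Lambda_{n,1}^{-2s}\tilde v_n^{(1)}.$$
The factor $\Lambda_{n,1}^{-(N-2s)\epsilon_n/2}$ is bounded (by $\|u_n\|\leq C$), so along a further subsequence it tends to some $b_1\in(0,1]$, while $a\Lambda_{n,1}^{-2s}\to 0$. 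If $\Lambda_{n,1}|x_{n,1}|$ stays bounded, the Hardy coefficient survives in the limit and we relabel $x_{n,1}\equiv 0$, producing a Hardy-anchored profile (placed into $j=1,\ldots,m$); if $\Lambda_{n,1}|x_{n,1}|\to\infty$, the Hardy term disappears and $U_1$ solves the classical critical equation (placed into $j=m+1,\ldots,h$). The case $\Lambda_{n,1}\,d(x_{n,1},\pa\Om)\not\to\infty$ is excluded by the standard half-space reflection argument as in \cite{cs,t1}.

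Next, setting $v_n^{(2)}:=v_n^{(1)}-\rho_{x_{n,1},\Lambda_{n,1}}(U_1)$ and iterating, orthogonality of the rescaled Sobolev norms of concentrating bubbles (equivalent to \eqref{c1.3}) yields an additive energy decomposition,
$$I_{\epsilon_n}\bigl(v_n^{(k+1)}\bigr)=I_{\epsilon_n}\bigl(v_n^{(k)}\bigr)-J_k(U_k)+o(1),$$
where $J_k$ denotes the limit functional corresponding to the $k$-th profile. Since every nontrivial solution of \eqref{c.2} has energy bounded below by a positive universal constant (Pohozaev identity applied to the limit equation together with the Hardy--Sobolev inequality), the iteration must terminate after finitely many $h$ steps, producing \eqref{c.1} with $\|\omega_n\|\to 0$. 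The separation \eqref{c1.3} is then proved by contradiction: if for some $i\neq j$ the sum on the left of \eqref{c1.3} stayed bounded, then $(x_{n,i},\Lambda_{n,i})$ and $(x_{n,j},\Lambda_{n,j})$ would be asymptotically conjugate by a bounded affine map, so the profile $U_j$ would vanish upon subtraction of the $i$-th bubble, contradicting its nontriviality.

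The main obstacle is the bubble-extraction step, precisely because the Hardy potential breaks translation invariance: the intermediate regime in which $\Lambda_{n,j}|x_{n,j}|$ remains bounded but nonzero must be absorbed into the Hardy-anchored family by translating the centre to the origin, and the coefficient $b_j\in(0,1]$ arising from the subcritical perturbation $\epsilon_n$ (together with the two distinct limit equations it may produce) has to be tracked consistently throughout. The remaining steps are essentially a fractional and Hardy-aware adaptation of the classical global compactness argument.
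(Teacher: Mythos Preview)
Your proposal is correct and follows precisely the Struwe-type global compactness argument that the paper has in mind: the paper's own ``proof'' simply reads ``The proof is similar to \cite{sa,cs,cps,cs1} and we omit the details,'' and your sketch is exactly the fractional Hardy-aware adaptation of those references, including the dichotomy on $\Lambda_{n,j}|x_{n,j}|$ that separates the Hardy-anchored bubbles from the translation-invariant ones and the tracking of $b_j=\lim \Lambda_{n,j}^{-(N-2s)\epsilon_n/2}\in(0,1]$. There is nothing to add.
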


\begin{proof}
The proof is similar to \cite{sa,cs,cps,cs1} and we omit the details.
\end{proof}

{\bf Acknowledgements:}
  This paper was partially supported by NSFC (No.11301204; No.11371159; No. 11561043), self-determined research funds of CCNU from
  the colleges' basic research and operation of MOE (CCNU14A05036).


\begin{thebibliography}{99}


\bibitem{abfs}
L. Abdelouhab, J. Bona, M. Felland, J. Saut, Nonlocal models for nonlinear dispersive waves.
Phys. D 40 (1989), 360-392.

\bibitem{sa}
S. Almaraz, Convergence of scalar-flat metrics on manifolds with boundary under the Yamabe flow.
arXiv:1206.1184v2 [math.DG] 6 Nov 2012.

\bibitem{ar}
A. Ambrosetti, P. H. Rabinowitz, Dual variational methods in
critical point theory and applications. J. Functional Analysis 14
(1973), 349-381.

%
%
%
%
%
%
%

\bibitem{A}
D. Applebaum, L\'{e}vy processes-from probability to finance and quantum groups,
Notices Amer. Math. Soc. 51 (2004), 1336-1347.






\bibitem{bcd}
B. Barrios, E. Colorado, A. de Pablo, U. S\'{a}nchez, On some critical problems for the fractional Laplacian operator.
J. Differential Equations 252 (2012), 6133-6162.

\bibitem{bmp}
B. Barrios, M. Medina, I. Peral,
Some remarks on the solvability of non-local elliptic problems with the Hardy potential.
Commun. Contemp. Math. 16, 1350046 (2014) [29 pages] DOI: 10.1142/S0219199713500466.


\bibitem{bel}
W. Beckner, Pitt's inequality and the uncertainty principle. Proc. Amer. Math. Soc.
123 (1995), 1897-1905.

\bibitem{bbc}
K. Bogdan, K. Burdzy, Z-Q. Chen, Censored stable processes. Probab. Theory
Related Fields 127 (2003), 89-152.

\bibitem{bc}
C. Br\"{a}ndle, E. Colorado, A. de Pablo and U. S\'{a}nchez, A concave-convex elliptic
problem involving the fractional Laplacian. Proc. Roy. Soc. Edinburgh Sect. A 143
(2013), 39-71.

\bibitem{bk}
H. Brezis, T. Kato, Remarks on the operator with
singular complex potentials. J. Math. Pures Appl. 58 (1979),
137-151.

\bibitem{cx}
X. Cabr\'{e}, Y. Sire,
Nonlinear equations for fractional Laplacians, I: Regularity, maximum principles, and Hamiltonian estimates.
Ann. Inst. H. Poincar\'{e} Anal. Non Lin\'{e}aire  31 (2014), 23-53.

\bibitem{ct}
X. Cabr\'{e} and J. Tan, Positive solutions of nonlinear problems involving the square
root of the Laplacian. Adv. Math. 224 (2010), 2052-2093.


%
%
%

\bibitem{cscs}
 L. Caffarelli, L. Silvestre, An extension problem related to the fractional Laplacian.
Comm. Partial Differential Equations 32 (2007) 1245--1260.

 \bibitem{cp}
D. Cao, P. Han, Solutions to critical elliptic equations with multi-sigular inverse square
potentials.
 J. Differential Equations 224 (2006), 332-372.


%
%

\bibitem{cps}
D. Cao, S. Peng, S. Yan, Infinitely many solutions for p-Laplacian
equation involving critical Sobolev growth. J. Funct. Anal. 262(2012), 2861-2902.

\bibitem{cs1}
D. Cao, S. Yan, Infinitely many solutions for an elliptic problem
involving critical Sobolev growth and Hardy potential. Calc. Var.
Partial Differential Equations 38 (2010), 471-501.



\bibitem{cdds}
A. Capella, J. D\'{a}vila, L. Dupaigne, Y.Sire,
Regularity of radial extremal solutions for some non-local semilinear equations. Comm.Partial Differential Equations 36(2011), 1353-1384.


\bibitem{cs}
W. Choi, J. Seok, Infinitely many solutions for semilinear nonlocal elliptic equations under noncompact settings.
arXiv:1404.1132v1 [math.AP] 4 Apr 2014.



\bibitem{dd}
J. D\'{a}via, M. Del Pino, Y. Sire, Non degeneracy of the bubble in the critial case for non local equation.
arXiv:1302.0276v1.


\bibitem{dp}
E. Di Nezza, G. Palatucci, E. Valdinoci, Hitchhiker's guide to the fractional
Sobolev spaces. Bull. Sci. Math. 136 (2012), 521-573.


\bibitem{es}
A. Elgart, B. Schlein, Mean field dynamics of boson stars. Comm. Pure Appl. Math. 60 (2007), 500-545.


\bibitem{fk}
E.B. Fabes, C.E. Kenig, R.P. Serapioni, The local regularity of degenerate elliptic equations. Comm. Partial Differential Equations
7 (1982), 77-116.


\bibitem{fa}
M. Fall, Semilinear elliptic equations for the fractional Laplacian with Hardy potential. arXiv: 1109.5530v4 [math.AP]
26 Oct 2011.

\bibitem{fq}
P. Felmer, A. Quaas, J. Tan, Positive solutions of the nonlinear Schr\"{o}dinger equation with the fractional Laplacian.
Proc. Roy. Soc. Edinburgh Sect. A 142 (2012), 1237-1262.

\bibitem{f}
 R.L. Frank, A simple proof of Hardy-Lieb-Thirring inequalities. Comm. Math. Phys.
290 (2009), 789-800.

\bibitem{f2}
 R.L. Frank, E.H. Lieb, R. Seiringer, Hardy-Lieb-Thirring inequalities for fractional
Schr\"{o}dinger operators. J. Amer. Math. Soc. 21 (2008), 925-950.

\bibitem{f3}
 R.L. Frank, R. Seiringer, Nonlinear ground state representations and sharp Hardy
inequalities. J. Funct. Anal. 255 (2008), 3407-3430.


\bibitem{fl}
J. Fr\"{o}hlich, E. Lenzmann, Blow up for nonlinear wave equations describing boson stars.
Comm. Pure Appl. Math. 60 (2007), 1691-1705.

\bibitem{GM}
A. Garroni, S. M\"{u}ller, $\Gamma$-limit of a phase-field model of dislocations. SIAM J. Math. Anal.
36 (2005), 1943-1964.

\bibitem{h}
I.W. Herbst, Spectral theory of the operator $(p^2+m^2)^{\frac{1}{2}}-Ze^2/r$. Comm.
Math. Phys. 53 (1977), 285-294.

\bibitem{kmr}
C. Kenig, Y. Martel, L. Robbiano, Local well-posedness and blow-up in the energy space for a class of $L^2$ critical
dispersion generalized Benjamin-Ono equations.
Ann. Inst. H. Poincar\'{e} Anal. Non Lin\'{e}aire 28 (2011), 853-887.

\bibitem{l}
N. Landkof, Foundations of Modern Potential Theory. Grundlehren der Mathematischen
Wissenschaften, Band 180 (Springer, New York, 1972).



\bibitem{r}
P. H. Rabinowtz, "Minimax methods in critical points theory with
applications to differnetial equations", CBMS Series, No. 65,
Providence, RI, (1986).


\bibitem{ro}
X. Ros-Oton, J. Serra, The dirichlet problem for the fractional Laplacian: regularity up to the boundary.
 J. Math. Pures Appl. 101  (2014), 275-302.

\bibitem{ls}
L. Silvestre, Regularity of the obstacle problem for a fractional power of the
Laplace operator. Comm. Pure Appl. Math.  60 (2007), 67-112


\bibitem{ys}
Y. Sire, E. Valdinoci, Fractional Laplacian phase transitions and boundary
reactions: a geometric inequality and a symmetry result.
J. Funct. Anal. 6 (2009), 1842-1864.



\bibitem{ems1}
E.M. Stein, G. Weiss, Fractional integrals on $n$-dimensional Euclidean space.
J. Math. Mech. 7 (1958), 503-514.



\bibitem{stein} E. Stein, Singular integrals and differentiability properties of functions.
Princeton Mathematical Series, No. 30 Princeton University Press, Princeton, N.J. 1970 xiv+290
pp. (Reviewer: R. E. Edwards) 46.38 (26.00).


\bibitem{s}
P.S. Stinga, J.L. Torrea, Extension problem and Harnack's inequality for some fractional operators. Comm. Partial Differential Equations 35 (2010), 2092-2122.


\bibitem{t}
J. Tan, The Brezis-Nirenberg type problem involving the square root of the Laplacian.
Calc. Var. Partial Differential Equations 36 (2011), 21-41.

\bibitem{t1}
J. Tan, Positive solutions for nonlocal elliptic problems. Discrete Contin. Dyn. Syst.
33(2013), 837-859.

\bibitem{yyy}
S. Yan, J. Yang, X. Yu, Equations involving fravtional
Laplacian operator: Compactness and applications, J. Funct. Anal.
269 (2015), 47-79.

\bibitem{y}
D. Yafaev, Sharp constants in the Hardy-Rellich inequalities. J. Funct. Anal.
168 (1999), 121-144.

\end{thebibliography}
\end{document}